\renewcommand{\P}{\mathbb{P}}
\newcommand{\E}{\mathbb{E}}
\newcommand{\eps}{\varepsilon}
\DeclareMathOperator*{\argmin}{arg\,min}
\newtheorem{theorem}{Theorem}[section]
\newtheorem*{theorem*}{Theorem}
\newtheorem{lemma}[theorem]{Lemma}
\newtheorem{model}{Model}
\newtheorem{corollary}[theorem]{Corollary}
\theoremstyle{definition}
\newtheoremstyle{myremark} 
    {\topsep}                    
    {\topsep}                    
    {\rm}                        
    {}                           
    {\bf}                        
    {.}                          
    {.5em}                       
    {}  
\theoremstyle{remark}
\newtheorem{remark}{Remark}[section]
\DeclareSymbolFont{rsfs}{U}{rsfs}{m}{n}
\DeclareSymbolFontAlphabet{\mathscrsfs}{rsfs}
\def\bb{{\boldsymbol b}}
\def\be{{\boldsymbol e}}
\def\bu{{\boldsymbol u}}
\def\bv{{\boldsymbol v}}
\def\bw{{\boldsymbol w}}
\def\bx{{\boldsymbol x}}
\def\by{{\boldsymbol y}}
\def\bz{{\boldsymbol z}}
\newcommand{\bbeta}{\scalebox{0.9}[1.0]{$\bm{\beta}$}}
\def\beps{{\boldsymbol \eps}}
\def\bxi{{\boldsymbol \xi}}
\def\bv{{\boldsymbol v}}
\def\tbv{{\Tilde{\boldsymbol v}}}
\def\bu{{\boldsymbol u}}
\def\be{{\boldsymbol e}}
\def\bw{{\boldsymbol w}}
\def\bb{{\boldsymbol b}}
\def\cS{{\mathcal S}}
\def\cS{{\mathcal S}}
\def\bb{{\boldsymbol b}}
\def\spa{\hspace{-.08em}}
\def\bzeta{{\boldsymbol \zeta}}
\def\ind{\mathbbm{1}}
\newcommand{\comment}[1]{}
\def\cSc{{{\mathcal S}^c}}
\numberwithin{equation}{section}
\title{Robustness of OLS to sample removals: \\Theoretical analysis and implications}
\author[*]{Eyar Azar}
\author[$\dagger$]{Michael J. Feldman}
\author[*]{Boaz Nadler}
\date{}
\affil[*]{Weizmann Institute of Science}
\affil[$\dagger$]{Stern School of Business, New York University}
\begin{document}
\maketitle

%

%

\begin{abstract}
For learned models to be trustworthy, it is essential to verify their robustness to perturbations in the training data.
Classical approaches involve uncertainty quantification via confidence intervals and bootstrap methods. In contrast, recent work proposed a more stringent form of robustness: stability to the removal of 
{\em any} subset of $k$ samples from the training set. 
In this paper, we present a theoretical study of this criterion for ordinary least squares (OLS). 
Our contributions are as follows: (1) Given $n$ i.i.d.\ training samples from a general misspecified model, we prove that with high probability, OLS is robust to the removal of any $k \ll n $ samples.
(2) For data of dimension $p$, OLS can withstand up to ${k\ll \sqrt{np}/\log n}$
sample removals while remaining robust and achieving the same error rate as OLS applied to the full dataset. 
Conversely, if $k$ is proportional to $n$, OLS is 
provably non-robust. 
(3) We revisit prior analyses that found several econometric datasets to be highly non-robust to sample removals. 
While this appears to contradict our results in (1), we demonstrate that the sensitivity is due to either heavy-tailed responses or correlated samples. Empirically, this sensitivity
is considerably attenuated 
by classical robust methods, such as linear regression with a Huber loss.
\end{abstract}

\section{Introduction}\label{sec:intro}

Assessing the robustness of learned models and their predictions is essential for the widespread adoption of artificial intelligence systems. The standard supervised learning workflow involves training a model on a labeled dataset, evaluating its performance, and deploying it to make predictions on new unlabeled samples. A key question is whether we can trust the learned model---for example, to remain stable under minor data perturbations. To address this, a variety of tools have been developed to assess the reliability of learned models and quantify their uncertainty. These include cross-validation, bootstrap methods, confidence intervals, outlier detection, and conformal prediction \citep{efron1994introduction,huber2009robust,conformal}.


The stability of a model is closely related to how subsets of the training data affect its learned parameters. 
Classical works quantified the effect of individual samples using the influence function \citep{hampel1974influence, cook80}, while the bootstrap assesses stability under the removal of random subsets of samples. 
In contrast, contemporary studies have emphasized the importance of evaluating how specifically selected subsets of training samples collectively affect a learned model \citep{koh2019accuracy, basu2020second,guu2023simfluence,huang2024approximations}. Clearly, the reliability of a learned model is called into question if removing a handful of training samples drastically changes the fitted parameters.  For instance, \citet{broderick2020automatic} showed that in several econometrics datasets, removing even a small fraction of the data (in some cases, less than 1\%),  significantly alters the learned model. 


In response, a stringent notion of robustness was recently proposed: stability to the removal of {\em any} subset of $k$ samples from the training  data \citep{broderick2020automatic,diakonikolas2023algorithmic}. Identifying the subset of $k$ training samples whose removal most alters a model is known as the most influential subset selection (MISS) problem \citep{basu2020second, hu2024}. The naive brute-force approach to finding this subset is to  retrain the model on all subsets of the data of size $n-k$. Clearly, this is computationally infeasible for even moderately sized datasets once $k$ exceeds $3$ or $4$ samples. Indeed, various problems related to the robustness of OLS to sample removals are computationally intractable
\citep{moitraprovably, price2022hardness}. 
Several recent works---including \citet{broderick2020automatic},
\citet{kuschnig2021hidden},
 \citet{HopkinsFreund}, 
 \citet{moitraprovably}, 
 {{\citet{hu2024}}}, 
 \citet{hopkins} and \citet{huang2024approximations}---developed and analyzed approximate methods to solve the MISS problem, and, more generally, to assess and certify the robustness to sample removals. This emerging area of research is termed {\it robustness auditing}.

This form of robustness raises two fundamental questions: 
under the standard statistical learning paradigm, where the training samples $(\bx_1, y_1), \ldots, (\bx_n, y_n)$ are independent and identically distributed  (i.i.d.) from a probability distribution $P(\bx,y)$, when is a learned model provably robust to sample removals? 
Conversely, when is a model non-robust purely due to the inherent randomness of the training data, even under the i.i.d.\ sample assumption and in the absence of outliers?
In this paper, we study these questions within the context of  ordinary least squares (OLS), one of the most widely used tools in data science.

\paragraph{Contributions.}
We analyze the robustness of OLS in a general setting where the distribution $P(\bx,y)$ of the data is arbitrary, subject to mild regularity conditions. 
In this case, the optimal predictor of $y$ given $\bx$ under squared loss, $\E[y | \bx]$, is generally nonlinear in $\bx$, implying that the linear model is misspecified. Let $\widehat \bbeta$ denote the OLS coefficients computed on the full training data, and $\widehat \bbeta_\cS$ those computed on a subset $\cS$ of size $|\cS| \geq n-k$. Under mild conditions on $P(\bx,y)$, we prove that $\widehat \bbeta_\cS$ concentrates around $\widehat \bbeta$ {\it uniformly} over all subsets $\cS$, provided that $ k \ll n$. 
We present asymptotic results of the form 
\begin{align}  \label{eq1234}
\max_{\cS \subseteq [n]: |\cS| \geq n - k} \|\widehat{\bbeta} - \widehat \bbeta_\cS\| \xrightarrow{\, p \,} 0 \quad \text{as} \quad n \to \infty,
\end{align}
as well as non-asymptotic concentration inequalities. 
In particular, when $k \ll n$, OLS is provably robust irrespective of the data dimension $p$; that is, robustness holds even when $p \asymp n$, a regime in which OLS is inconsistent. 
Furthermore,  for ${k\ll \sqrt{np}/\log n}$, OLS achieves the same error rate as on the full dataset. That is, the error in (\ref{eq1234}) 
is comparable to $\|\bbeta - \widehat \bbeta\|$, where $\bbeta$ is the best linear predictor.

Next, under the standard Gaussian linear model, $\by = X\bbeta + \beps$ with $X$ multivariate Gaussian, we derive sharper non-asymptotic bounds with explicit constants.
As outlined in 
Table \ref{tab:robustness}, 
depending on $n$, $p$ and $k$, OLS exhibits distinct robustness and consistency properties. In particular, it is provably non-robust when $k$ is proportional to $n$.
The analysis under the general model uses standard concentration results and union bounds. 
In contrast, the sharper bounds for the Gaussian linear model are obtained using Gaussian comparison inequalities. {These inequalities have been used previously in high-dimensional statistics (see, for example, \cite{raskutti2010restricted, obozinski2011support}), although their application to robustness under sample removals is (to our knowledge) novel. The key observation is that subsets of size $n-k$ are highly overlapping when $k\ll n$, making union bounds overly conservative.} This approach may be useful for studying the robustness of regression methods beyond OLS to sample removals. {See Section~\ref{sec:mod2} for further discussion and comparison with prior results.}

Our theoretical results have several implications for robustness auditing. 
If the removal of a handful of samples significantly affects the fitted coefficients, the 
source of this
non-robustness should be carefully 
investigated. In particular, it  
may stem from one or more of the following factors:
(1) the covariance matrix of the data is ill-conditioned, (2) training samples are not all i.i.d., or (3) the data is heavy-tailed. In other words, even under model misspecification, the probability of $\widehat \bbeta$ exhibiting extreme sensitivity to sample removals, as observed in several recent studies, is  exponentially small if the data is ``well-behaved''.  

To complement our theoretical analysis, we empirically investigate the robustness of OLS to sample removals
{on the benchmark Diabetes dataset \citep{efron2004least}} and on two widely cited econometric datasets: the Cash Transfers data \citep{Angelucci09} and the Nightlights data \citep{martinez2022much}.
{In the Diabetes dataset, both the features and the response exhibit light tails. As detailed in Appendix \ref{appendix:diabetes}, the empirical effect of adversarial sample removals on this well-behaved dataset aligns closely with the scaling predicted by our theory.
}
In contrast,
prior robustness audits of the two econometric datasets, including \citet{broderick2020automatic} and \citet{hopkins}, identified small subsets of samples whose removal reversed the sign of the estimated treatment effects.

At first glance, these empirical findings appear to contradict our theoretical results. However, we show that the observed non-robustness is driven by either heavy-tailed responses or by correlated samples. In the Cash Transfers data, it is attributable to the heavy-tailed distribution of the response variable $y$. 
Specifically, many of the samples identified for removal by the AMIP algorithm (Approximate Maximum Influence Perturbation; \cite{broderick2020automatic}) have response values far exceeding  the mean of $y$ over the full dataset---often by  5 or 10 standard deviations, and in some cases, by more than 20.  
This key pattern, which elucidates the lack of robustness, is not 
discussed in \citet{broderick2020automatic} or \citet{hopkins}.
In the Nightlights data, the removed samples are concentrated in only a few countries and years, and are therefore highly correlated. 

Next, we show empirically that this non-robustness is substantially mitigated by applying classical robust regression techniques.
In particular, fitting a linear model under the Huber loss, the estimated coefficients are considerably more stable to sample removals. 
For the Cash Transfers data, roughly two to six times as many samples need to be removed to reverse the sign of the estimated treatment effect. Moreover, the robust estimates 
closely align with the original OLS estimates 
in \cite{Angelucci09}, which seems to reaffirm their findings.
Similarly, for the Nightlights dataset, compared to OLS, Huber regression
requires over 50\% more sample removals
to reverse the study's conclusions.
%
In summary, the interpretation of a robustness audit---that a study's conclusion may be overturned by removing a handful of samples---requires careful assessment.

{Before concluding the introduction, we further position our work within the broader literature on robustness and high-dimensional statistics. As discussed above, several recent works propose methods to quantify how groups of training samples affect a learned model and its predictions. Perhaps the most popular approach involves classical influence functions \citep{hampel1974influence}. While influence functions, which are based on first-order infinitesimal perturbations, traditionally approximate the effect of a single sample, influence-function-based subset methods approximate the effect of removing multiple samples by adding  their individual effects. This approximation can be inaccurate for the worst-case perturbation $\Delta_k(\bv)$, since the joint effect of several removals can differ substantially from the sum of individual effects \citep{guu2023simfluence}. Recent works report such failures empirically, even for low-dimensional OLS \citep{huang2024approximations, hu2024}.

Turning to high-dimensional regression, notable contributions include asymptotic theory for high-dimensional $M$-estimators \citep{el2013robust, lei2018asymptotics, adomaityte2024high}, prediction risk for OLS and ridge regression \citep{dobriban2018high, hastie2022surprises}, and bootstrap or resampling methods for high-dimensional linear models \citep{el2018can, ma2014statistical, ma2022asymptotic, meng2021lowcon}. 
These works address consistency, prediction risk, limiting distributions, or random resampling.
In contrast, our focus is the worst-case change in the OLS estimator under adversarial removal of any subset of $k$ samples.
}

Finally, while this work focuses on OLS, it suggests several potential research directions. A natural question is whether the robustness guarantees established here can be extended to other regression methods, such as ridge or kernel regression. Of particular interest are methods that use robust loss functions, which are designed to limit the influence of outliers or heavy-tailed data. Sparse linear regression raises another question: how stable are the selected variables under sample removals? We leave these questions for future research.

\begin{table*}[t]
  \caption{
    Asymptotic robustness and consistency properties of OLS under four regimes defined by relationships among $k$, $p$, and $n$. {Robustness is in the sense of (\ref{eq1234}); consistency means $\|\widehat \bbeta - \bbeta\| \xrightarrow{\, p \,} 0$ as $n \rightarrow \infty$, where $\bbeta$ is the coefficient vector of the best linear predictor, defined in (\ref{eq:beta_def}).}
  }
  \label{tab:robustness}
  \centering
  \begin{tabular}{ccccc}
    \toprule
    Region 
    & 
    & 
    & 
    & 
    \midrule
    Robust      & \checkmark & \checkmark & $\times$ & $\times$ \\
    Consistent  & \checkmark & $\times$   & \checkmark & $\times$ \\
    \bottomrule
  \end{tabular}
\end{table*}

\section{Problem Setting}
\label{sec:pb}

Consider a regression problem with training data $(\bx_1, y_1), \ldots, (\bx_n, y_n) \in \mathbb{R}^{p+1}$ and $n \geq p$. Let $X \in \mathbb{R}^{n \times p}$ contain $\bx_1, \ldots, \bx_n$ as its rows and $\by \coloneqq (y_1, \ldots, y_n)^\top$ be the $n$ response values. Assuming $X$ has full column rank, the OLS estimator is  
\begin{align} \label{eq:OLS}
\widehat \bbeta \coloneqq \big(X^\top X\big)^{-1} X^\top \by. 
\end{align}
For $\cS \subseteq [n]$ of size $|\cS| \geq p$, let $X_\cS \in \mathbb{R}^{|\cS| \times p}$ be the submatrix of $X$  consisting of the rows indexed by $\cS$, and define $\by_\cS$ analogously. 
Assuming $X_\cS$ has full column rank, the OLS estimator computed on the subset 
$(X_\cS,\by_\cS)$
of the training data is
\begin{align} \label{eq:betas_def}
\widehat \bbeta_\cS \coloneqq \big(X_\cS^\top X_\cS\big)^{-1} X_\cS^\top \by_\cS  .
\end{align}
As in \cite{hopkins}, the robustness of $\widehat \bbeta$ to $k$ sample removals in a fixed direction $\bv \in \mathbb{S}^{p-1}$ is 
measured by the following quantity: 
\begin{align}
\label{eq:delta_v}
    \Delta_k(\bv) \coloneqq \max_{\cS \subset [n], |\cS| = n-k}  \langle  \widehat \bbeta - \widehat \bbeta_\cS, \bv \rangle  ,
\end{align}
 where $\langle \bu,\bv\rangle$ denotes the inner product of the vectors $\bu$ and $\bv$. 
 For example, $\Delta_k(\be_1)$ quantifies the sensitivity of the first regression coefficient to $k$ removals. If 
 $ \Delta_k(\text{sign}(\widehat \beta_1)\be_1) > |\widehat \beta_1|$, 
 then there exists a subset of $k$ samples whose removal changes the sign of $\widehat\beta_1$. 
This may have important implications: 
 if $x_1$ (the first coordinate of $\bx$) is a treatment indicator in  a study, then $\beta_1$
  represents the treatment effect. If a small number of sample removals reverses the sign of $\widehat \beta_1$, the estimated treatment effect is unreliable and trust in the model is questionable.

As discussed in Section \ref{sec:intro}, exact evaluation of $\Delta_k(\bv)$ is computationally infeasible, and 
various methods have been developed  to approximate it. 
In contrast, our work focuses on a complementary aspect of robustness: we investigate conditions under which  $\Delta_k(\bv)$ is either provably close to zero, or bounded away from zero, even when the data are i.i.d.\ from a given distribution. Specifically, we study the robustness of OLS to sample removals under two generative models: (1) a general setting where the joint distribution of  $\bx$ and  $y$ is arbitrary, subject to mild conditions, in which case the linear model is potentially misspecified, and (2) a standard Gaussian linear model.

We now formally describe these two models. 
For completeness, a precise definition of sub-Gaussian  random variables is provided below.

\begin{model} \label{mod1}
    Let $P(\bx,y)$ be a probability distribution over $\mathbb{R}^{p+1}$ such that 
    \begin{enumerate}[label=(\roman*)]
    \item $\bx$ is sub-Gaussian, zero mean, and has positive-definite covariance $\Sigma \coloneqq \E[\bx \bx^\top]$.
    \item $y  $ is sub-Gaussian.
    \end{enumerate}
\end{model}

As is well known, the optimal predictor of $y$ given $\bx$ under squared loss is the conditional mean 
$\E[y|\bx]$. Under Model \ref{mod1}, 
this function is in general non-linear in $\bx$ and challenging to accurately estimate, particularly in high dimensions
where $p\gg 1$. 
A standard approach is to fit a linear model, which, although potentially misspecified, often has low variance and may perform well in practice.
Under squared loss, the quantity of interest is the vector of coefficients of the best linear predictor: 
\begin{align} \label{eq:beta_def}
\bbeta \coloneqq \argmin_{\bb \in \mathbb{R}^p} \E
\left[
(y - \bb^\top \spa  \bx)^2
\right]
= \Sigma^{-1} \E[y\bx] . 
\end{align}

\begin{model} \label{mod2}
Let $y = \bbeta^\top \spa \bx + \varepsilon$, where $\bbeta \in \mathbb{R}^p$ is deterministic and
 \begin{enumerate}[label=(\roman*)]
    \item $\bx \sim \mathcal{N}(0,\Sigma)$ where $\Sigma$ is positive definite.    
    \item $\varepsilon$ is sub-Gaussian, mean zero, and independent of $\bx$.
    \end{enumerate}
\end{model}
Properties of OLS have been extensively studied in both classical and high-dimensional regimes. M-estimators---including OLS and regression with Huber's loss---are consistent 
under Model \ref{mod2}
when $p/n \rightarrow 0$ \citep{portnoy1984asymptotic}. In contrast, in high-dimensional settings where $p$ is comparable to $n$, such estimators are inconsistent \citep{huber1973robust, el2013robust}. Recent works, including \citet{dobriban2018high} and \citet{hastie2022surprises}, derive exact asymptotic prediction errors for OLS and ridge regression. 
To our knowledge, however, the robustness of OLS to sample removals has not been systematically studied.

\begin{remark}
Model  \ref{mod2}  is a particular instance of Model \ref{mod1}. Hence, whenever OLS is robust under Model \ref{mod1}, it is also robust under Model \ref{mod2}.
\end{remark}

    \begin{remark}
        The assumption that $\bx$ is zero-mean 
        is standard in theoretical statistics and machine learning, as it simplifies the analysis. 
        Our results extend to $\bx$ of the form $ (1, \widetilde \bx)$, where $\widetilde \bx$ is sub-Gaussian, zero mean, and has positive-definite covariance. In this case, the OLS estimator defined in (\ref{eq:OLS}) includes a constant intercept.   
        The generalization of our analysis to the case where $\bx$ has an arbitrary mean is left for future work.
%
\end{remark}

\paragraph{Definitions and Notations.} 
We denote the spectral norm of a 
positive-definite matrix $\Sigma$ by $\|\Sigma\|$ and its condition number by  $\kappa(\Sigma) \coloneqq \lambda_1(\Sigma)/\lambda_p(\Sigma)$. The sub-Gaussian and sub-exponential  norms of a real-valued random variable $z$, denoted by $\|z\|_{\psi_2}$ and $\|z\|_{\psi_1}$, are defined as
\begin{align}\|z\|_{\psi_\alpha} \coloneqq \inf\big\{ t > 0 : \mathbb{E}[\exp(|z|^\alpha / t^\alpha)] \leq e \big\}, \label{eq:subgdef} \end{align}
for $\alpha = 1, 2$. 
For a random vector $\bz \in \mathbb{R}^p$, these norms are defined as
\[\|\bz\|_{\psi_\alpha} \coloneqq \sup_{\bv \in \mathbb{S}^{p-1}} \|\bv^\top \bz \|_{\psi_\alpha} , \quad \alpha = 1, 2. \]
We say that $\bz$ is sub-Gaussian if $\|\bz\|_{\psi_2} < \infty$ and sub-exponential if $\|\bz\|_{\psi_1} < \infty$. The choice of the constant $e$ in \eqref{eq:subgdef} is arbitrary; for instance, \citet{Vershynin12} uses the value 2. We adopt $e$ so that for deterministic scalars $a \in \mathbb{R}$, $\|a\|_{\psi_1} = |a|$. See Sections 2 and 3 of \cite{Vershynin12} for equivalent definitions in terms of the decay of tail probabilities.


\section{Theoretical Results}
\label{sec:results}

Sections \ref{subsec:k,p<<n} and \ref{sec:mod2} study the robustness of OLS under Models \ref{mod1} and \ref{mod2}, respectively. Throughout this section, $C, c > 0$ denote absolute constants.
All proofs are deferred to the appendix.

\subsection{Robustness Guarantees for Model \ref{mod1}} 
\label{subsec:k,p<<n}
This section establishes the asymptotic robustness of OLS to sample removals as \(n \to \infty \) and \( k / n \to 0 \). To this end, we derive a non-asymptotic concentration inequality that holds for finite $n, p,$ and $k$.

Formally, for our asymptotic results, we consider a sequence of regression problems indexed by the sample size \( n \). Each instance consists of \( n \) i.i.d.\ samples \( (\bx_1, y_1), \dots, (\bx_n, y_n)  \) drawn from a distribution satisfying Model~\ref{mod1}, where the dimension  \( p = p_n \) and the number of removals  \( k = k_n \) may increase with \( n \). 
This framework covers both the classical setting where $p$ is fixed, and the high-dimensional setting where $n, p \rightarrow \infty$ together. In the latter case, we assume that $\|\Sigma^{-1/2}\|$, $\|\bx\|_{\psi_2}$, and $\|y\|_{\psi_2}$ are uniformly bounded in $n$.
 
\begin{theorem}  \label{thrm1}
Let $(\bx_1, y_1), \ldots, (\bx_n, y_n) $ be  i.i.d.\ samples from Model \ref{mod1}. 
Assume that as $n \rightarrow \infty$, $\|\Sigma^{-1/2}\|$, $\| \Sigma^{-1/2} \bx\|_{\psi_2}$, and $\|y\|_{\psi_2}$ are bounded. If  $\limsup \|\Sigma^{-1/2}\bx\|_{\psi_2}^2 \sqrt{p/n} < c$ and $k/n \rightarrow 0$, then 
\begin{align} \label{eq:thrm1}
\max_{\cS \subseteq [n], |\cS| \geq n-k}  
\|\widehat \bbeta -\widehat \bbeta_\cS\|\xrightarrow{\,p\,} 0 . \end{align} 
\end{theorem}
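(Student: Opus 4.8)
The plan is to control the maximum in \eqref{eq:thrm1} through the exact leave-$k$-out identity. For $\cS$ with $|\cS|\ge n-k$, set $\cR:=[n]\setminus\cS$ and let $\widehat\br:=\by-X\widehat\bbeta$ be the full-data residual vector; the normal equations $X^\top\widehat\br=0$ give $X_\cS^\top\widehat\br_\cS=-X_\cR^\top\widehat\br_\cR$, whence a short computation yields $\widehat\bbeta_\cS-\widehat\bbeta=-(X_\cS^\top X_\cS)^{-1}X_\cR^\top\widehat\br_\cR$. Since $\lambda_{\min}(X_\cS^\top X_\cS)$ only decreases and $\|X_\cR\|,\|\widehat\br_\cR\|$ only increase as $\cR$ grows, it suffices to take $|\cR|=k$, and then
\begin{align}
\max_{|\cS|\ge n-k}\|\widehat\bbeta-\widehat\bbeta_\cS\|
\;\le\;
\Big(\max_{|\cR|=k}\lambda_{\min}(X_\cS^\top X_\cS)^{-1}\Big)
\Big(\max_{|\cR|=k}\|X_\cR\|\Big)
\Big(\max_{|\cR|=k}\|\widehat\br_\cR\|\Big).
\end{align}
So I would reduce the theorem to three high-probability estimates: (a) $\min_{|\cR|=k}\lambda_{\min}(X_\cS^\top X_\cS)\gtrsim n\lambda_p(\Sigma)$; (b) $\max_{|\cR|=k}\|X_\cR\|^2\lesssim p+k\log\tfrac{en}{k}$; and (c) $\max_{|\cR|=k}\|\widehat\br_\cR\|^2\lesssim k\log\tfrac{en}{k}$, with constants depending only on the quantities assumed bounded.

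For (b) I would write $\bx_i=\Sigma^{1/2}\bz_i$, apply the standard sub-Gaussian sample-covariance deviation bound at each fixed $\cR$, and take a union bound over the $\binom nk\le(en/k)^k$ subsets; since $\log\binom nk=O(k\log\tfrac{en}{k})$, choosing the deviation level of that order makes the failure probability vanish. Estimate (a) would then follow from the same covariance concentration --- which yields $\lambda_{\min}(X^\top X)\gtrsim n\lambda_p(\Sigma)$ precisely under the hypothesis $\limsup\|\Sigma^{-1/2}\bx\|_{\psi_2}^2\sqrt{p/n}<c$ --- together with Weyl's inequality $\lambda_{\min}(X_\cS^\top X_\cS)\ge\lambda_{\min}(X^\top X)-\max_{|\cR|=k}\|X_\cR\|^2$, the subtracted term being at most $\tfrac12\lambda_{\min}(X^\top X)$ because $p/n$ is small (by hypothesis, with $c$ small) and $\tfrac kn\log\tfrac{en}{k}\to0$.

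The heart of the argument will be (c), since the removed samples could be exactly those carrying the largest residuals. I would write $\bxi:=\by-X\bbeta$, so that $\widehat\br=(I-H)\bxi$ with $H$ the hat matrix and $\|\widehat\br_\cR\|^2=\bxi^\top(I-H)P_\cR(I-H)\bxi$, a quadratic form whose matrix is positive semidefinite with operator norm $\le1$ and trace $\le k$. Decoupling sample $i$ from the rest via the leave-one-out identity $\widehat r_i=(1-H_{ii})\big(\xi_i-\bx_i^\top(\widehat\bbeta^{(i)}-\bbeta)\big)$, where $\widehat\bbeta^{(i)}$ is the OLS fit without sample $i$ and hence independent of $(\bx_i,y_i)$, each factor $\xi_i-\bx_i^\top(\widehat\bbeta^{(i)}-\bbeta)$ is, conditionally on $\widehat\bbeta^{(i)}$, sub-Gaussian of bounded norm (since $\|\widehat\bbeta^{(i)}-\bbeta\|=O_p(1)$ and $\bx_i$ is independent of $\widehat\bbeta^{(i)}$), so $\widehat r_i^{\,2}$ is sub-exponential with bounded norm; a Bernstein/Hanson--Wright estimate for $\sum_{i\in\cR}\widehat r_i^{\,2}$ at fixed $\cR$ (handling the cross-dependence on a high-probability event), unionized over the $\binom nk$ subsets at deviation level $\asymp k\log\tfrac{en}{k}$, would give (c). Under the misspecification of Model~\ref{mod1}, $\E[\xi\mid\bx]$ need not vanish; I would split $\bxi$ into its conditional mean --- itself i.i.d., sub-Gaussian, and uncorrelated with $\bx$ --- and a conditionally centered part, and treat the resulting quadratic, bilinear and ``signal'' contributions analogously, taking care to invoke only the bounded \emph{marginal} sub-Gaussian norms rather than conditional tails, which is exactly what the leave-one-out decoupling affords.

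Combining (a)--(c) gives
\begin{align}
\max_{|\cS|\ge n-k}\|\widehat\bbeta-\widehat\bbeta_\cS\|
\;\lesssim\;
\frac{1}{n\lambda_p(\Sigma)}\sqrt{\Big(p+k\log\tfrac{en}{k}\Big)\,k\log\tfrac{en}{k}}
\;\lesssim\;
\sqrt{\tfrac{p}{n}\cdot\tfrac{k\log(en/k)}{n}}+\tfrac{k\log(en/k)}{n},
\end{align}
which tends to $0$ because $p/n$ is bounded while $\tfrac kn\log\tfrac{en}{k}\to0$ whenever $k/n\to0$ (as $r\log(e/r)\to0$ when $r\downarrow0$), and all three estimates hold simultaneously with probability $\to1$. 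I expect step (c) to be the main obstacle: it requires pairing a sharp concentration bound for the residual energy with a union bound over exponentially many subsets, which is affordable only because $\binom nk=\exp\!\big(O(k\log\tfrac{en}{k})\big)$ and $\tfrac kn\log\tfrac{en}{k}\to0$; the nonvanishing conditional mean under misspecification, and the dependence of the leave-one-out residuals, add routine but nontrivial bookkeeping.
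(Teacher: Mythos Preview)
Your overall strategy---the leave-$k$-out identity, then control of three factors by concentration plus a union bound over the $\binom nk$ subsets---is essentially the paper's. The paper's decomposition differs only in that it uses $(Z^\top Z)^{-1}$ in place of your $(X_\cS^\top X_\cS)^{-1}$ and, crucially, writes the residual on the removed set relative to $\widehat\bbeta_\cS$ rather than $\widehat\bbeta$. Steps (a) and (b) match the paper's singular-value estimates; one cosmetic issue is that separating $\lambda_{\min}(X_\cS^\top X_\cS)$ from $\|X_\cR\|$ costs an uncontrolled factor $\sqrt{\kappa(\Sigma)}$ (only $\|\Sigma^{-1/2}\|$ is assumed bounded), but whitening first and bounding $\|(X_\cS^\top X_\cS)^{-1}X_\cR^\top\|=\|\Sigma^{-1/2}(Z_\cS^\top Z_\cS)^{-1}Z_\cR^\top\|\le\|\Sigma^{-1/2}\|\sigma_p^{-2}(Z_\cS)\sigma_1(Z_\cR)$ as one object fixes this.

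The genuine gap is (c). Your Hanson--Wright route conditions on $X$, but under Model~1 the \emph{conditional} norm $\|\xi_i\mid\bx_i\|_{\psi_2}$ can be unbounded (only the marginal norm is assumed finite), so that inequality is unavailable. Leave-one-out restores bounded marginal tails for each $\widehat r_i$, but the $\widetilde r_i$ are then dependent across $i$, and neither Bernstein nor Hanson--Wright applies to $\sum_{i\in\cR}\widetilde r_i^{\,2}$; the ``cross-dependence'' you flag is precisely the obstacle, not bookkeeping. To see that the issue is real, the naive split $\|\widehat\br_\cR\|\le\|\bxi_\cR\|+\sigma_1(Z_\cR)\|\Sigma^{1/2}(\widehat\bbeta-\bbeta)\|$ produces a term of order $p/\sqrt n$, which is $\asymp\sqrt n$ when $p\asymp n$---far larger than the $\sqrt{k\log(en/k)}$ you need. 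The paper avoids (c) altogether: it writes the corresponding piece as $Z_{\cS^c}$ applied to the unit direction of $(Z_\cS^\top Z_\cS)^{-1}Z_\cS^\top\by_\cS$, which depends only on $(Z_\cS,\by_\cS)$ and is therefore \emph{independent} of $Z_{\cS^c}$. Conditionally, this vector then has $|\cS^c|$ i.i.d.\ sub-Gaussian coordinates of norm $\omega$, so its squared norm is $\lesssim k$ by Bernstein and survives the union bound. In your language, the fix is to replace $\widehat\bbeta$ by $\widehat\bbeta_\cS$ when forming the residual on $\cR$---indeed $\|\widehat\br_\cR\|\le\|\by_\cR-X_\cR\widehat\bbeta_\cS\|$ because $(I+Z_\cR(Z_\cS^\top Z_\cS)^{-1}Z_\cR^\top)^{-1}$ is a contraction---after which the independence argument goes through directly.
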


\begin{remark}
{If $p=o(n)$, the condition 
$\limsup \|\Sigma^{-1/2}\bx\|_{\psi_2}^2 \sqrt{p/n} < c$ immediately follows from the boundedness of $\|\Sigma^{-1/2}\|$ and $\|\bx\|_{\psi_2}$. } 
\end{remark}

Theorem \ref{thrm1} guarantees the
robustness of OLS to $k = o(n)$ sample removals, irrespective of the data dimension. Importantly, (\ref{eq:thrm1}) holds uniformly  over all subsets 
$\cS$ of size at least $n-k$, meaning that the removed samples may be chosen adversarially. 
Moreover, Theorem \ref{thrm1} directly implies that the robustness measure $\Delta_k(\bv)$ defined in (\ref{eq:delta_v}) converges to zero uniformly in $\bv$. This is because by  its definition, 
\[
\Delta_k(\bv) \leq \max_{\cS \subseteq [n], |\cS| \geq n-k}  \| \widehat \bbeta - \widehat \bbeta_\cS \|  
\]

\noindent Hence, under the conditions of Theorem \ref{thrm1},
\[ \sup_{\bv \in \mathbb{S}^{p-1}} \Delta_k(\bv) \xrightarrow{\,p\,} 0 .\] 

In summary, when the training data is “well-behaved”---that is, the samples are i.i.d.\ from a light-tailed distribution with a well-conditioned covariance matrix $\Sigma$---OLS is robust to $k \ll n$ removals. Hence, if the removal of a small subset of samples significantly alters the fitted coefficients,
careful investigation is warranted. Potential causes include among others, non-i.i.d.\ data, heavy-tailed data, or data with an ill-conditioned covariance matrix.

Theorem \ref{thrm1} is an immediate consequence of the following non-asymptotic bound:

\begin{theorem}\label{lem1:conc_ineq_for_thrm1} Let $(\bx_1, y_1), \ldots, (\bx_n, y_n) $ be  i.i.d.\ samples from Model \ref{mod1} and define 
\begin{align}  
& \omega \coloneqq \|\Sigma^{-1/2}\bx\|_{\psi_2} , & \eta \coloneqq (1 + \omega^3) \|\Sigma^{-1/2}\|  \|y\|_{\psi_2} .
    \label{eq:eta_def}
\end{align}
Assume $k \leq n/2$ and  
\begin{align} \label{thrm3.2cond} \omega^2 \bigg(\sqrt{\frac{k}{n} \log\Big( \frac{en}{k} \Big) } + \sqrt{\frac{p}{n}} \hspace{.15em} \bigg) \leq c.
\end{align} 
Then, for any $t > 0$, with probability at least  $1 - 3(en/k)^{-kt^2} - 4e^{-cn/\omega^4}$,
\begin{equation}
    \begin{aligned} \label{f2}
 &   \max_{\cS \subseteq [n], |\cS|\geq n-k}  \|\widehat \bbeta - \widehat \bbeta_\cS\| \leq    C \eta  
    \bigg( \frac{k}{n}\log \Big(\frac{en}{k}\Big) + \frac{1}{n}\sqrt{k p \log \Big(\frac{en}{k}\Big)} \hspace{.1em}\bigg) (1+t)^2 . 
    \end{aligned}
\end{equation}
\end{theorem}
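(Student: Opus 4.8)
I would begin from an exact leave-$k$-out identity. Writing $T:=[n]\setminus\cS$, a union bound over all index sets of size at most $k$ (whose number is still at most $(en/k)^k$ since $k\le n/2$) reduces the problem to the worst case $|T|=k$, which I assume henceforth. From the normal equations $X^\top X\widehat\bbeta=X^\top\by$ and $X_\cS^\top X_\cS\widehat\bbeta_\cS=X_\cS^\top\by_\cS$ together with $X^\top X=X_\cS^\top X_\cS+X_T^\top X_T$ and $X^\top\by=X_\cS^\top\by_\cS+X_T^\top\by_T$, one gets
\begin{align*}
\widehat\bbeta-\widehat\bbeta_\cS=\big(X^\top X\big)^{-1}X_T^\top\big(\by_T-X_T\widehat\bbeta_\cS\big).
\end{align*}
The decisive feature is that $\by_T-X_T\widehat\bbeta_\cS$ is a vector of \emph{out-of-sample} prediction residuals: since $\widehat\bbeta_\cS$ does not use the samples in $T$, conditionally on $\{(\bx_j,y_j):j\in\cS\}$ its entries are i.i.d., (essentially) mean zero, and sub-Gaussian with $\psi_2$-norm at most $\|\xi\|_{\psi_2}+\omega\,\|\Sigma^{1/2}(\widehat\bbeta_\cS-\bbeta)\|$, where $\xi:=y-\bbeta^\top\bx$ (so $\E[\xi\bx]=0$ and $\|\xi\|_{\psi_2}\lesssim(1+\omega)\|y\|_{\psi_2}$). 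Passing to the whitened design $\bz_i:=\Sigma^{-1/2}\bx_i$ (identity covariance, $\|\bz_i\|_{\psi_2}=\omega$) with matrix $Z$, the identity reads $\widehat\bbeta-\widehat\bbeta_\cS=\Sigma^{-1/2}(Z^\top Z)^{-1}Z_T^\top\bv_T$ with $\bv_T:=\by_T-X_T\widehat\bbeta_\cS$, hence
\begin{align*}
\big\|\widehat\bbeta-\widehat\bbeta_\cS\big\|\le\|\Sigma^{-1/2}\|\cdot\big\|(Z^\top Z)^{-1}\big\|_{\op}\cdot\|Z_T\|_{\op}\cdot\|\bv_T\|.
\end{align*}

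\textbf{Uniform control of the three factors.} I would then bound each factor, uniformly over $T$. (i) $\|(Z^\top Z)^{-1}\|_{\op}\le 2/n$ by the standard lower bound on the least singular value of a matrix with i.i.d.\ sub-Gaussian rows, which applies precisely because hypothesis \eqref{thrm3.2cond} forces $\omega^2\sqrt{p/n}\le c$; this holds with probability $1-2e^{-cn/\omega^4}$. (ii) For each fixed $T$, $\|Z_T\|_{\op}\le C\omega(\sqrt k+\sqrt p+u)$ with probability $1-2e^{-u^2}$; a union bound over the $\le(en/k)^k$ index sets, with a sufficiently generous constant to absorb the entropy $k\log(en/k)$, gives $\max_{|T|\le k}\|Z_T\|_{\op}\le C\omega(\sqrt{k\log(en/k)}+\sqrt p)(1+t)$ up to probability $(en/k)^{-kt^2}$. (The same estimate with $t$ a fixed constant, combined with (i), also yields $\lambda_{\min}(Z_\cS^\top Z_\cS)\ge n/4$ for every $\cS$, so each $\widehat\bbeta_\cS$ is well defined.) (iii) --- the crux --- $\max_{|T|\le k}\|\bv_T\|\le C\|y\|_{\psi_2}\sqrt{k\log(en/k)}(1+t)$. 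For fixed $T$, condition on the $\cS$-data; then $\bv_T$ is a sum of $|T|$ i.i.d.\ (essentially) mean-zero sub-Gaussian coordinates, so the standard norm bound for such vectors gives $\|\bv_T\|\le C\big(\|\xi\|_{\psi_2}+\omega\|\Sigma^{1/2}(\widehat\bbeta_\cS-\bbeta)\|\big)(\sqrt{|T|}+w)$ with conditional probability $1-e^{-cw^2}$; taking $w$ of order $\sqrt{k\log(en/k)}(1+t)$ and union-bounding over $T$ proves the claim, \emph{provided} the sub-sample estimation error is uniformly small. That last fact, $\max_\cS\|\Sigma^{1/2}(\widehat\bbeta_\cS-\bbeta)\|\le C\omega\|y\|_{\psi_2}\sqrt{p/n}$, is the usual OLS estimation-error bound for a sub-Gaussian design of size $\ge n/2$ made uniform over $\cS$ by a further union bound; with \eqref{thrm3.2cond} (so $\omega^2\sqrt{p/n}\le c$) it makes the conditional per-coordinate $\psi_2$-norm of $\bv_T$ of order $\|\xi\|_{\psi_2}$.

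\textbf{Assembling.} Multiplying (i)--(iii),
\begin{align*}
\big\|\widehat\bbeta-\widehat\bbeta_\cS\big\|\;\lesssim\;\|\Sigma^{-1/2}\|\cdot\frac1n\cdot\omega\big(\sqrt{k\log(en/k)}+\sqrt p\big)\cdot\|y\|_{\psi_2}\sqrt{k\log(en/k)}\cdot(1+t)^2,
\end{align*}
and since $\big(\sqrt{k\log(en/k)}+\sqrt p\big)\sqrt{k\log(en/k)}=k\log(en/k)+\sqrt{kp\log(en/k)}$, this is the bracket of \eqref{f2} times $(1+t)^2$ --- the square arising from the two union-bound deviations in (ii) and (iii). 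The $\omega$-dependence (one power from $\|Z_T\|_{\op}$, a further power through $\|\xi\|_{\psi_2}\lesssim(1+\omega)\|y\|_{\psi_2}$, the latter valid because $\|\Sigma^{1/2}\bbeta\|=\|\E[y\,\Sigma^{-1/2}\bx]\|\lesssim\|y\|_{\psi_2}$) is absorbed, with room to spare, into $\eta=(1+\omega^3)\|\Sigma^{-1/2}\|\|y\|_{\psi_2}$ up to the absolute constant $C$. The failure probability is the sum over the bounded family of concentration events used, of the stated form $3(en/k)^{-kt^2}+4e^{-cn/\omega^4}$.

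\textbf{Main obstacle.} The delicate point is uniformity over the $\binom nk$ sub-samples: because subsets of size $n-k$ with $k\ll n$ overlap in almost all coordinates, a crude union bound is far too lossy, so each individual tail estimate must be sharp enough that the entropy $\binom nk\asymp e^{k\log(en/k)}$ is swallowed by the Gaussian decay --- which is exactly what \eqref{thrm3.2cond} (equivalently $\omega^4(k\log(en/k)+p)\lesssim n$) provides. The second, conceptual, subtlety is misspecification: $\xi=y-\bbeta^\top\bx$ is only uncorrelated with $\bx$, not independent, so one cannot condition on the design matrix. Routing the perturbation through the out-of-sample residual $\by_T-X_T\widehat\bbeta_\cS$ --- which, given the $\cS$-data, is a genuinely fresh i.i.d.\ sub-Gaussian vector uncoupled from the leverages of the removed rows --- is what decouples ``noise'' from ``leverage''; the naive in-sample variant $\widehat\bbeta-\widehat\bbeta_\cS=(X_\cS^\top X_\cS)^{-1}X_T^\top(\by_T-X_T\widehat\bbeta)$ would instead force one to bound $\|Z^\top\bxi\|$ against the leverage of $T$, introducing a spurious $p/n$ term absent from \eqref{f2}.
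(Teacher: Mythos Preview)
Your decomposition $\widehat\bbeta-\widehat\bbeta_\cS=\Sigma^{-1/2}(Z^\top Z)^{-1}Z_T^\top(\by_T-X_T\widehat\bbeta_\cS)$ and the union-bound framework (singular values of $Z$ and $Z_T$, norm of the residual vector, entropy $(en/k)^k$) are exactly those of the paper. The only substantive difference is how you control the residual $\bv_T=\by_T-X_T\widehat\bbeta_\cS$: you center at $\bbeta$ and invoke a uniform estimation-error bound on $\widehat\bbeta_\cS$, whereas the paper never centers and simply bounds $\|\Sigma^{1/2}\widehat\bbeta_\cS\|$ crudely via the pseudoinverse.

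There is a genuine gap in your step (iii). You assert that
\[
\max_{\cS}\big\|\Sigma^{1/2}(\widehat\bbeta_\cS-\bbeta)\big\|\le C\omega\|y\|_{\psi_2}\sqrt{p/n}
\]
is ``the usual OLS estimation-error bound \ldots\ made uniform over $\cS$ by a further union bound.'' This does not go through as stated. Under Model~\ref{mod1}, $\xi=y-\bbeta^\top\bx$ is only uncorrelated with $\bx$, so the bound on $\|Z_\cS^\top\bxi_\cS\|$ for a \emph{fixed} $\cS$ comes from Bernstein on sub-exponential sums plus an $\epsilon$-net over $\mathbb S^{p-1}$, and the resulting failure probability is of order $e^{-cp}$. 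When $k\log(en/k)\gg p$ --- a regime the theorem explicitly covers (it only assumes \eqref{thrm3.2cond}, not $k\lesssim p$) --- this tail is too weak to survive the union bound over $(en/k)^k$ subsets. Inflating the deviation to absorb the entropy gives at best a rate $\sqrt{(p+k\log(en/k))/n}$, not $\sqrt{p/n}$.

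The fix is that you do not need the $\sqrt{p/n}$ rate at all. For your argument it suffices that the conditional $\psi_2$-norm of each coordinate of $\bv_T$ is $O((1+\omega)\|y\|_{\psi_2})$, and for that an $O(1)$ bound on $\|\Sigma^{1/2}\widehat\bbeta_\cS\|$ is enough. The paper obtains this directly:
\[
\|\Sigma^{1/2}\widehat\bbeta_\cS\|=\big\|(Z_\cS^\top Z_\cS)^{-1}Z_\cS^\top\by_\cS\big\|\le\frac{\sigma_1(Z_\cS)}{\sigma_p^2(Z_\cS)}\|\by_\cS\|\lesssim\frac{\sqrt n}{n}\cdot\|y\|_{\psi_2}\sqrt n=\|y\|_{\psi_2},
\]
uniformly in $\cS$, using only the singular-value estimates you already established and $\|\by_\cS\|\le\|\by\|$ (no union bound needed for the latter). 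Replacing your $\sqrt{p/n}$ claim by this crude bound closes the gap and brings your argument in line with the paper's; the detour through $\bbeta$ then becomes unnecessary.
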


{

\begin{remark} \label{rem2026}
    The sub-Gaussian condition (i) in Model~\ref{mod1} is essential to Theorem~\ref{lem1:conc_ineq_for_thrm1}. The proof of (\ref{f2}) relies on concentration of quantities such as $\sigma_p(Z_\cS)$ and $\|Z_\cS^\top y_\cS\|$, uniformly over all subsets $\cS\subseteq[n]$ with $|\cS|\ge n-k$. With heavy-tailed covariates or responses, this may fail:  the exclusion of a few extreme observations may strongly affect the OLS estimator. 
    Thus, under heavy-tailed data, OLS may be non-robust to sample removals even in Region~I of Table~\ref{tab:robustness}, where $(p+k)/n\to0$.
\end{remark}

}

{\begin{remark} \label{rem2026a}
The dependence on $n$, $p$, and $k$ in \eqref{f2} is natural and already arises in the simpler setting of multivariate mean estimation. Specifically, suppose that $\bx_1,\ldots,\bx_n$ are i.i.d.\ realizations of a sub-Gaussian vector $\bx$ with unknown mean $\boldsymbol{\mu}$.
Then an argument similar to the proof of Theorem~\ref{lem1:conc_ineq_for_thrm1} yields that, with probability at least $1-e^{-t}$, 
\begin{align}
\sup_{\cS \subseteq [n]: |\cS|\ge n-k}
\bigg\| \frac{1}{n} \sum_{i=1}^n \bx_i - \frac{1}{|\cS|}\sum_{i \in \cS} \bx_i \bigg\|
\leq 
C \|\bx - \boldsymbol{\mu}\|_{\psi_2} 
\left( \frac{k}{n} \sqrt{\log\Big( \frac{en}{k} \Big)}
+ \frac{1}{n} \sqrt{k(p+t)}
\right) , 
\label{2026asdf} 
\end{align}
where $C > 0$ is an absolute constant. 
Thus, the sample mean is robust to sample removals, with dependence on $n$, $p$, and $k$ matching \eqref{f2} up to logarithmic factors.
\end{remark}

\begin{remark}
    \label{rem:ub_delta_k(V)}
In the setting of Theorem~\ref{lem1:conc_ineq_for_thrm1}, for a fixed direction $\bv \in \mathbb{S}^{p-1}$, the robustness measure $\Delta_k(\bv)$
satisfies a bound analogous to \eqref{f2},   without the second term (which arises from uniformly bounding all directions).  In particular, an argument similar to the proof of Theorem~\ref{lem1:conc_ineq_for_thrm1} yields that, with high probability,
\begin{align}
    \label{eq:ub}
      \Delta_k(\bv)
  \leq
  C \eta \cdot  \frac{k}{n}\log\frac{en}{k}.
\end{align}
In Appendix~\ref{appendix:diabetes}, we illustrate this scaling empirically on the benchmark Diabetes dataset from \cite{efron2004least}, whose features and response appear light-tailed.
\end{remark}
}

 We conjecture that the bound in (\ref{f2}) is optimal up to logarithmic factors. In Theorem \ref{thm:asymp_small_p_large_k}, we  establish a lower bound on $\Delta_k(\bv)$ that exhibits terms similar to those appearing in (\ref{f2}). Further discussion and details are provided in Section~\ref{sec:mod2}.

Theorem \ref{lem1:conc_ineq_for_thrm1} bounds the deviation of $\widehat\bbeta_\cS$
from $\widehat\bbeta$. 
To study the deviation of $\widehat \bbeta_\cS$ from the best linear predictor $\bbeta$, defined in \eqref{eq:beta_def}, we now present a bound on $\|\widehat\bbeta- \bbeta \| $. As noted in Section \ref{sec:pb}, bounds on this quantity have been extensively studied. However, as we could not find this precise result in the literature,  we provide its proof in the appendix for completeness.

\begin{theorem}\label{thrm:consistency} Let $(\bx_1, y_1), \ldots, (\bx_n, y_n) $ be  i.i.d.\ samples from Model \ref{mod1}.  Define \begin{align*} 
 &  \omega \coloneqq \|\Sigma^{-1/2}\bx\|_{\psi_2}, &
 \widetilde \eta \coloneqq \big(1+\sqrt{\kappa(\Sigma)} \big) \big\|\Sigma^{-1}(y \bx - \E[y \bx]) \big\|_{\psi_1}   + \sqrt{\kappa(\Sigma)} \omega^2(1+\omega^2)  \|\bbeta\| ,
\end{align*}
and assume $\omega^2\sqrt{p/n} \leq c$. For any $t > 0$, with probability at least $1 - 3e^{-pt^2} - 3e^{-cn/\omega^4}$, 
\begin{equation}
    \begin{aligned} 
    \|  \widehat \bbeta - \bbeta\| \leq  C  \hspace{.08em} \widetilde \eta \hspace{.04em}  \bigg( \sqrt{\frac{p}{n}}(1+t) + \frac{p}{n}(1+ t)^2 \bigg).
    \end{aligned}
\end{equation}
\end{theorem}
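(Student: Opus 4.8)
The plan is to express $\widehat\bbeta-\bbeta$ through the deviations of the empirical second moments from their population values and then invoke standard concentration. Write $\widehat\Sigma\coloneqq\tfrac1nX^\top X$ and $\widehat\bb\coloneqq\tfrac1nX^\top\by$, so $\widehat\bbeta=\widehat\Sigma^{-1}\widehat\bb$, while $\bbeta=\Sigma^{-1}\bb$ with $\bb\coloneqq\E[y\bx]=\Sigma\bbeta$. The starting identity is
\[
\widehat\bbeta-\bbeta \;=\; \widehat\Sigma^{-1}(\widehat\bb-\bb)\;+\;\widehat\Sigma^{-1}(\Sigma-\widehat\Sigma)\bbeta ,
\]
which splits the error into a ``noise'' term and a ``design'' term, each the product of $\widehat\Sigma^{-1}$ with an average of $n$ i.i.d.\ mean-zero random vectors: $\widehat\bb-\bb=\tfrac1n\sum_i(y_i\bx_i-\E[y\bx])$ and $(\Sigma-\widehat\Sigma)\bbeta=\tfrac1n\sum_i(\Sigma-\bx_i\bx_i^\top)\bbeta$.

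Next I would whiten the covariates, setting $\bz_i\coloneqq\Sigma^{-1/2}\bx_i$ (isotropic, sub-Gaussian with $\|\bz_i\|_{\psi_2}=\omega$) and $\widehat G\coloneqq\tfrac1n\sum_i\bz_i\bz_i^\top=\Sigma^{-1/2}\widehat\Sigma\,\Sigma^{-1/2}$. By the standard operator-norm deviation bound for sample covariance matrices with independent sub-Gaussian rows \citep[see, e.g.,][]{Vershynin12}, for any $t>0$ one has, on an event of probability at least $1-2e^{-pt^2}-2e^{-cn/\omega^4}$, simultaneously
\[
\|\widehat G-\bI\|\leq C\omega^2\Big(\sqrt{p/n}\,(1+t)+(p/n)(1+t)^2\Big)\qquad\text{and}\qquad\|\widehat G-\bI\|\leq\tfrac12 ,
\]
the second inequality following from the hypothesis $\omega^2\sqrt{p/n}\leq c$ for a small absolute $c$. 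On this event $X^\top X$ has full rank, $\|\widehat G^{-1}\|\leq2$, and hence $\|\widehat\Sigma^{-1}\Sigma\|=\|\Sigma^{-1/2}\widehat G^{-1}\Sigma^{1/2}\|\leq2\sqrt{\kappa(\Sigma)}$ and likewise $\|\Sigma\widehat\Sigma^{-1}\|\leq2\sqrt{\kappa(\Sigma)}$; it is important to carry such ``half-whitened'' factors rather than to bound $\|\widehat\Sigma^{-1}\|\leq2\|\Sigma^{-1}\|$ crudely, since the latter produces $\kappa(\Sigma)$ in place of the sharper $\sqrt{\kappa(\Sigma)}$ appearing in $\widetilde{\eta}$.

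Then I would bound the two terms on this event. For the design term, $\widehat\Sigma^{-1}(\Sigma-\widehat\Sigma)\bbeta=\Sigma^{-1/2}\widehat G^{-1}(\bI-\widehat G)\Sigma^{1/2}\bbeta$, whose norm is at most $\|\Sigma^{-1/2}\|\cdot2\cdot\|\bI-\widehat G\|\cdot\|\Sigma^{1/2}\|\,\|\bbeta\|\lesssim\sqrt{\kappa(\Sigma)}\,\omega^2(1+\omega^2)\|\bbeta\|\big(\sqrt{p/n}(1+t)+(p/n)(1+t)^2\big)$ by the operator-norm bound above (the exact polynomial in $\omega$ coming from the matrix-concentration step). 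For the noise term, I would control $\|\Sigma^{-1}(\widehat\bb-\bb)\|=\big\|\tfrac1n\sum_i\Sigma^{-1}(y_i\bx_i-\E[y\bx])\big\|$ by a vector Bernstein argument: the summands are i.i.d., mean zero, and sub-exponential with $\sup_{\bv\in\mathbb{S}^{p-1}}\|\bv^\top\Sigma^{-1}(y\bx-\E[y\bx])\|_{\psi_1}=\|\Sigma^{-1}(y\bx-\E[y\bx])\|_{\psi_1}$; covering $\mathbb{S}^{p-1}$ by a $\tfrac14$-net of cardinality at most $9^p$, applying Bernstein's inequality to each linear functional, and taking a union bound yields $\|\Sigma^{-1}(\widehat\bb-\bb)\|\lesssim\|\Sigma^{-1}(y\bx-\E[y\bx])\|_{\psi_1}\big(\sqrt{p/n}(1+t)+(p/n)(1+t)^2\big)$. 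Finally, split $\widehat\Sigma^{-1}(\widehat\bb-\bb)=\Sigma^{-1}(\widehat\bb-\bb)+\widehat\Sigma^{-1}(\Sigma-\widehat\Sigma)\Sigma^{-1}(\widehat\bb-\bb)$: the first summand has norm $\lesssim\|\Sigma^{-1}(y\bx-\E[y\bx])\|_{\psi_1}(\cdots)$ (prefactor $1$), and the second — a higher-order term — is at most $2\sqrt{\kappa(\Sigma)}\,\|\bI-\widehat G\|\,\|\Sigma^{-1}(\widehat\bb-\bb)\|$ and is absorbed into the stated rate because $\|\bI-\widehat G\|\leq\tfrac12$; together these give the $(1+\sqrt{\kappa(\Sigma)})\|\Sigma^{-1}(y\bx-\E[y\bx])\|_{\psi_1}$ contribution. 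Summing the design and noise bounds, union-bounding the relevant events to obtain probability at least $1-3e^{-pt^2}-3e^{-cn/\omega^4}$, and absorbing logarithmic and numerical factors into $C$ gives the claim.

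I expect the main obstacle to be the statistical dependence between the random matrix $\widehat\Sigma^{-1}$ and the random vectors it multiplies, together with securing the sharp $\sqrt{\kappa(\Sigma)}$ (not $\kappa(\Sigma)$) dependence; both are handled by conditioning on the good event $\{\|\widehat G-\bI\|\leq\tfrac12\}$, factoring out operator norms, and carrying half-whitened quantities throughout. A secondary point is that the two-term shape $\sqrt{p/n}+p/n$ is intrinsic to the sub-exponential (rather than sub-Gaussian) tails of the products $y\bx$ and $\bx\bx^\top$, so the vector Bernstein step must retain both terms.
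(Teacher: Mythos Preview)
Your proposal is correct and follows essentially the same approach as the paper. The paper decomposes by adding and subtracting $\Sigma^{-1}\widehat\bb$ (yielding $\|\widehat\bbeta-\bbeta\|\leq(1+\sqrt{\kappa(\Sigma)}\|\Omega-I_p\|)\|\Sigma^{-1}(\widehat\bb-\bb)\|+\sqrt{\kappa(\Sigma)}\|\Omega-I_p\|\|\bbeta\|$ with $\Omega=\widehat G^{-1}$), whereas you first add and subtract $\widehat\Sigma^{-1}\bb$ and then split the noise term once more; after your second split the two decompositions coincide. The concentration steps---operator-norm control of $\widehat G-I$ via sub-Gaussian covariance estimation and control of $\|\Sigma^{-1}(\widehat\bb-\bb)\|$ via Bernstein plus an $\eps$-net---are identical to the paper's.
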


\begin{remark}
    Lemmas 2.7.6 and 2.7.7 of \cite{Vershynin12} imply that the sub-exponential norm $ \big\|  \Sigma^{-1} (y\bx - \E[y  \bx]) \big\|_{\psi_1}$, which appears in the definition of $\widetilde \eta$, is bounded by a constant multiple of $\|\Sigma^{-1/2}\|(\|\bx\|_{\psi_2}^2 + \|y\|_{\psi_2}^2)$.
\end{remark}

As $\|\widehat\bbeta_\cS-\bbeta\| \leq \|\widehat\bbeta_\cS - \widehat\bbeta\| + \| 
\widehat\bbeta - \bbeta
\| $,  Theorems \ref{thrm1} and \ref{thrm:consistency} imply the following corollary:
\begin{corollary} \label{cor1} Assume the setting of Theorem \ref{thrm1} and that $\kappa(\Sigma)$ and $\|\bbeta\|$ are bounded.  As $n \rightarrow \infty$ and $(p+k)/n \rightarrow 0$,      \[
\max_{\cS \subseteq [n], |\cS| \geq n-k}   \|\widehat \bbeta_\cS -\bbeta\| \xrightarrow{\,p\,} 0 .\] 
\end{corollary}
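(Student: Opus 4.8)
The plan is straightforward: this corollary is a direct combination of the two preceding results via the triangle inequality, so the proof is essentially an exercise in verifying that the hypotheses line up and that the probabilistic bounds combine correctly. First I would write, for any subset $\cS$ with $|\cS| \geq n-k$,
\begin{align*}
\|\widehat\bbeta_\cS - \bbeta\| \leq \|\widehat\bbeta_\cS - \widehat\bbeta\| + \|\widehat\bbeta - \bbeta\|,
\end{align*}
and then take the maximum over all such $\cS$ on both sides, noting that the second term does not depend on $\cS$. This gives
\begin{align*}
\max_{\cS \subseteq [n], |\cS| \geq n-k} \|\widehat\bbeta_\cS - \bbeta\| \leq \max_{\cS \subseteq [n], |\cS| \geq n-k}\|\widehat\bbeta_\cS - \widehat\bbeta\| + \|\widehat\bbeta - \bbeta\|.
\end{align*}

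Next I would invoke Theorem \ref{thrm1} to conclude that the first term on the right converges to $0$ in probability, and then argue that the second term does as well. For the latter, the cleanest route is to apply Theorem \ref{thrm:consistency}: under the assumptions that $\kappa(\Sigma)$, $\|\bbeta\|$, $\|\Sigma^{-1/2}\|$, $\|\Sigma^{-1/2}\bx\|_{\psi_2}$, and $\|y\|_{\psi_2}$ are all bounded in $n$ (the first two are new hypotheses of the corollary, the rest are inherited from Theorem \ref{thrm1}), the quantity $\widetilde\eta$ is bounded — here I would cite the remark following Theorem \ref{thrm:consistency} to control $\|\Sigma^{-1}(y\bx - \E[y\bx])\|_{\psi_1}$ by a constant multiple of $\|\Sigma^{-1/2}\|(\|\bx\|_{\psi_2}^2 + \|y\|_{\psi_2}^2)$. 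Since $p/n \to 0$, the condition $\omega^2\sqrt{p/n} \leq c$ eventually holds, and choosing, say, $t = t_n \to \infty$ slowly enough that $e^{-p t_n^2} \to 0$ while $\widetilde\eta(\sqrt{p/n}(1+t_n) + (p/n)(1+t_n)^2) \to 0$ (possible because $p/n \to 0$), we get $\|\widehat\bbeta - \bbeta\| \xrightarrow{p} 0$. Summing two quantities that each converge to $0$ in probability yields the claim.

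There is no real obstacle here; the only points requiring a small amount of care are (i) checking that the hypotheses of the corollary genuinely imply the hypotheses of both Theorem \ref{thrm1} and Theorem \ref{thrm:consistency} — in particular that ``$k, p \ll n$'' together with boundedness of the relevant norms gives $\limsup \omega^2\sqrt{p/n} < c$ as needed, and that $(p+k)/n \to 0$ implies both $k/n \to 0$ and $p/n \to 0$ — and (ii) the standard $\eps$-$\delta$ bookkeeping needed to turn the two finite-sample, high-probability bounds into a genuine convergence-in-probability statement, for which one fixes $\eps > 0$, chooses $n$ large enough that each bound is below $\eps/2$ with probability at least $1 - \delta/2$, and applies a union bound. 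If one prefers, one can avoid re-deriving the convergence of $\|\widehat\bbeta - \bbeta\|$ from scratch and instead state it as an immediate consequence of Theorem \ref{thrm:consistency} in the same way Theorem \ref{thrm1} follows from Theorem \ref{lem1:conc_ineq_for_thrm1}.
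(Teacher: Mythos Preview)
Your proposal is correct and matches the paper's approach exactly: the paper states just before the corollary that since $\|\widehat\bbeta_\cS-\bbeta\| \leq \|\widehat\bbeta_\cS - \widehat\bbeta\| + \|\widehat\bbeta - \bbeta\|$, Theorems \ref{thrm1} and \ref{thrm:consistency} imply the result. Your additional care in verifying that the hypotheses of both theorems are satisfied (in particular that $\widetilde\eta$ is bounded via the remark after Theorem \ref{thrm:consistency}) is appropriate and fills in details the paper leaves implicit.
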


Corollary \ref{cor1} guarantees the
consistency of OLS in estimating
the coefficients $\bbeta$ of the best linear predictor, provided that $p =o(n)$ and at most  $k = o(n)$ samples are removed. 
Finally, a natural question is: how many samples can be removed while ensuring that 
$\max_{\cS \subseteq [n], |\cS| \geq n-k} \|\widehat \bbeta_\cS - \bbeta \|$ retains
the error rate of OLS on the full data? 
Comparing the bounds in 
Theorems \ref{lem1:conc_ineq_for_thrm1} and \ref{thrm:consistency},
a sufficient condition is that $    k \ll \sqrt{np}/(\log n)$.
%

\subsection{Results for Model \ref{mod2}}
\label{sec:mod2}

Next, we analyze OLS under Model \ref{mod2}, where the optimal predictor is linear in $\bx$. In this setting, assuming $\varepsilon$ is normally distributed, we derive a non-asymptotic result with explicit constants: 

    
\begin{theorem} \label{thrm:linmodel_nonasymp}
 Let $(\bx_1, y_1), \ldots, (\bx_n, y_n) $ be  i.i.d.\ samples from Model \ref{mod2}. Assume $\varepsilon \sim \mathcal{N}(0,\sigma_\eps^2)$, $k \leq n/2$, and $p \leq n-k$. Then, for any $t, \delta > 0$ such that 
 \begin{align} \label{rho_def} \rho \coloneqq \sqrt{\frac{3k}{n} \log\Big(\frac{en}{k}\Big)} + \sqrt{\frac{p}{n}} + \delta < 1,
 \end{align}
 with probability at least   $1 -  6(en/k)^{-kt^2} -  e^{-n/2} - 2e^{-n\delta^2/2}$,
\comment{\begin{align} \label{f}
  \max_{\cS \subseteq [n], |\cS| \geq n-k} \| \widehat \bbeta  - \widehat \bbeta_\cS \|  \leq  \frac{\|\Sigma^{-1/2}\| \hspace{.1em} \sigma_\eps}{(1-\rho)^4} \cdot \bigg[& \bigg(\frac{6k}{n} \log\Big( \frac{en}{k} \Big) + \frac{2}{n} \sqrt{kp \log\Big( \frac{en}{k} \Big)} \hspace{.15em} \bigg) \bigg(1 + 5 \sqrt{\frac{p}{n}} \hspace{.15em}\bigg)  \nonumber  \\ 
      & +  32\bigg( \frac{k}{n}  \log\Big( \frac{en}{k} \Big) \bigg)^{3/2}(1+t) \bigg] (1+t)^2.
\end{align}}
\begin{align} \label{f}
  \max_{\cS \subseteq [n], |\cS| \geq n-k} \| \widehat \bbeta  - \widehat \bbeta_\cS \|  \leq  \frac{\|\Sigma^{-1/2}\| \hspace{.1em} \sigma_\eps}{(1-\rho)^4} \cdot \bigg[& \bigg(\frac{6k}{n} \log\Big( \frac{en}{k} \Big) + \frac{2}{n} \sqrt{kp \log\Big( \frac{en}{k} \Big)} \hspace{.15em} \bigg) (1+t)^2  \nonumber  \\ 
      & +   \frac{72 \sqrt{k}(k+p)}{n^{3/2}}  \log^{3/2}\Big( \frac{en}{k} \Big)(1+t)^3 \bigg].
\end{align}
\end{theorem}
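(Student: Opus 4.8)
The plan rests on two features of Model~\ref{mod2}: the linear model is correctly specified, so the target $\bbeta$ cancels from every leave-out difference, and the noise is exactly Gaussian, which lets us control the relevant random objects via Gaussian comparison rather than union bounds. First I would reduce. Under Model~\ref{mod2}, $\widehat{\bbeta} = \bbeta + (X^\top X)^{-1}X^\top\beps$, and for every $\cS$ with $|\cS|\ge n-k\ge p$ (so $X_\cS$ has full column rank almost surely) also $\widehat{\bbeta}_\cS = \bbeta + (X_\cS^\top X_\cS)^{-1}X_\cS^\top\beps_\cS$; hence $\bbeta$ cancels and $\widehat{\bbeta} - \widehat{\bbeta}_\cS$ depends only on the noise, which is what permits a sharper bound than Theorem~\ref{lem1:conc_ineq_for_thrm1}. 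Next, whitening: writing $X = Z\Sigma^{1/2}$ with $Z$ having i.i.d.\ $\mathcal N(0,I_p)$ rows, the hat matrix $\Pi := X(X^\top X)^{-1}X^\top = Z(Z^\top Z)^{-1}Z^\top$ and the residuals $\widehat{\br} := \by - X\widehat{\bbeta} = (I_n - \Pi)\beps$ are unchanged, and $\widehat{\bbeta} - \widehat{\bbeta}_\cS = \Sigma^{-1/2}\big[(Z^\top Z)^{-1}Z^\top\beps - (Z_\cS^\top Z_\cS)^{-1}Z_\cS^\top\beps_\cS\big]$, so it suffices to prove the bound for $\Sigma = I_p$ and pay a factor $\|\Sigma^{-1/2}\|$ at the end. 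Finally, $\max_{|\cS|\ge n-k}(\cdot)$ is a maximum over removed sets $T := [n]\setminus\cS$ with $|T|\le k$, of which there are at most $(en/k)^k$.

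The key identity is the leave-$k$-out (Sherman--Morrison--Woodbury) formula applied to $Z_\cS^\top Z_\cS = Z^\top Z - Z_T^\top Z_T$:
\[
\widehat{\bbeta} - \widehat{\bbeta}_\cS \;=\; (Z^\top Z)^{-1}Z_T^\top (I_k - \Pi_{TT})^{-1}\,\widehat{\br}_T ,
\]
where $\Pi_{TT}$ is the $T\times T$ principal submatrix of $\Pi$ and $\widehat{\br}_T = [(I_n-\Pi)\beps]_T$. I would then split via $(I_k-\Pi_{TT})^{-1} = I_k + \Pi_{TT}(I_k-\Pi_{TT})^{-1}$ into a first-order term $(Z^\top Z)^{-1}Z_T^\top\widehat{\br}_T$ and a remainder $(Z^\top Z)^{-1}Z_T^\top\Pi_{TT}(I_k-\Pi_{TT})^{-1}\widehat{\br}_T$; these will produce, respectively, the first and second bracketed terms in \eqref{f}.

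The bulk of the work is to bound, simultaneously over all $T$ with $|T|\le k$: (i) $\sigma_{\min}(Z)\ge\sqrt n - \sqrt p - \sqrt n\delta\ge\sqrt n(1-\rho)$ and $\sigma_{\max}(Z)\le\sqrt n(1+\rho)\le\sqrt n/(1-\rho)$, by standard Gaussian singular-value estimates, each with probability $\ge 1-e^{-n\delta^2/2}$, whence $\|(Z^\top Z)^{-1}\|\le[n(1-\rho)^2]^{-1}$; (ii) $\max_{|T|\le k}\sigma_{\max}(Z_T)$, which I expect to be of order $\sqrt p + \sqrt{k\log(en/k)}$, so that via $\|\Pi_{TT}\|\le\sigma_{\max}(Z_T)^2/\sigma_{\min}(Z)^2$ one gets $\max_{|T|\le k}\|\Pi_{TT}\|<1$ under \eqref{rho_def} and hence $\max_{|T|\le k}\|(I_k-\Pi_{TT})^{-1}\|$ bounded; (iii) $\max_{|T|\le k}\|\widehat{\br}_T\|$, which I would handle by writing $\widehat{\br} = Og$ with $O\in\R^{n\times(n-p)}$ an orthonormal basis of $\mathrm{range}(I_n-\Pi)$ and $g\sim\mathcal N(0,\sigma_\eps^2 I_{n-p})$, so that $\widehat{\br}_T = O_T g$ with $\|O_T\|\le1$ and $\E\|O_T g\|^2\le\sigma_\eps^2 k$, yielding a bound of order $\sigma_\eps\sqrt{k\log(en/k)}$; a crude companion estimate $\|\beps\|\le c\,\sigma_\eps\sqrt n$ (probability $\ge1-e^{-n/2}$) is also used. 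The decisive point is that (ii) and (iii) are \emph{not} carried out by a union bound over an $\eps$-net of $\mathbb{S}^{p-1}$: $\max_{|T|\le k}\sigma_{\max}(Z_T)$ and $\max_{|T|\le k}\|\widehat{\br}_T\|$ are suprema of Gaussian processes indexed by the $k$-sparse unit vectors, and the right tool --- which also pins down the explicit constants, including the $\sqrt 3$ inside $\rho$ --- is a Gaussian comparison inequality (Chevet's/Gordon's/Sudakov--Fernique), using that the Gaussian width of the $k$-sparse unit sphere is of order $\sqrt{k\log(en/k)}$, together with Gaussian (resp.\ Stiefel) concentration for the fluctuations. These subset-uniform events account for the $6(en/k)^{-kt^2}$ in the failure probability, the noise-norm event for the $e^{-n/2}$, and the two singular-value bounds for the $2e^{-n\delta^2/2}$.

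To assemble, on the intersection of these events I would bound the first-order term by $\|(Z^\top Z)^{-1}\|\,\sigma_{\max}(Z_T)\,\|\widehat{\br}_T\|$ and the remainder by $\|(Z^\top Z)^{-1}\|\,\sigma_{\max}(Z_T)\,\|\Pi_{TT}\|\,\|(I_k-\Pi_{TT})^{-1}\|\,\|\widehat{\br}_T\|$, substitute the estimates above, expand $\big(\sqrt p + \sqrt{k\log(en/k)}(1+t)\big)\sqrt{k\log(en/k)}(1+t)$ into a $\sqrt{kp\log(en/k)}$ piece plus a $k\log(en/k)(1+t)$ piece, and collect the powers of $(1-\rho)^{-1}$ (from $\|(Z^\top Z)^{-1}\|$, from $\sigma_{\max}(Z)\le\sqrt n/(1-\rho)$, and from the bound on $\|\Pi_{TT}\|$) together with $\|\Sigma^{-1/2}\|\,\sigma_\eps$ into the common prefactor $\|\Sigma^{-1/2}\|\,\sigma_\eps\,(1-\rho)^{-4}$; tracking the numerical constants through the Gaussian tail and comparison bounds then yields the $6$, $2$, and $72$ in \eqref{f}. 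The hard part will be steps (ii)--(iii): securing the uniform-over-subsets bounds with only a $\sqrt{\log(en/k)}$ inflation over the single-set estimate and with $\sqrt p$ entering additively --- a plain union bound over the $\binom nk$ subsets crossed with an $\eps$-net of the sphere is too crude, and it is precisely the Gaussian comparison argument (the technical novelty advertised in the introduction) that achieves the correct rate and keeps all constants explicit. A secondary nuisance is the bookkeeping required to land on the precise form of \eqref{f}.
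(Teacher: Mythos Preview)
Your approach is sound and will produce the correct rate, but it takes a genuinely different route from the paper's. You start from the Sherman--Morrison--Woodbury leave-out identity $\widehat\bbeta - \widehat\bbeta_\cS = \Sigma^{-1/2}(Z^\top Z)^{-1}Z_T^\top(I-\Pi_{TT})^{-1}\widehat{\br}_T$ and control $\max_T\sigma_1(Z_T)$, $\max_T\|\widehat{\br}_T\|$, and $\max_T\|(I-\Pi_{TT})^{-1}\|$ uniformly over removed sets $T$; the paper instead works from the decomposition $\widehat\bbeta - \widehat\bbeta_\cS = \Sigma^{-1/2}(Z^\top Z)^{-1}Z_{\cS^c}^\top\big[\beps_{\cS^c} - Z_{\cS^c}(Z_\cS^\top Z_\cS)^{-1}Z_\cS^\top\beps_\cS\big]$ and directly bounds $\max_\cS\|Z_{\cS^c}^\top\beps_{\cS^c}\|$, $\max_\cS\|Z_\cS^\top\beps_\cS\|$, $\min_\cS\sigma_p(Z_\cS)$, and $\max_\cS\sigma_1(Z_{\cS^c})$. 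Both rest on Gaussian comparison, but in ``dual'' directions: the paper conditions on $\beps$ and applies Sudakov--Fernique to the $Z$-process (for $\|Z_\cS^\top\beps_\cS\|$) and Gordon's inequality (for $\sigma_p(Z_\cS)$), whereas you would condition on $Z$ and apply Sudakov--Fernique to the $\beps$-process (for $\|\widehat{\br}_T\|$, using that $(I-\Pi)$ is a contraction). Your residual/leverage formulation is closer to the influence-function literature and makes the first-order/remainder split natural; the paper's keeps the raw noise explicit and in fact still uses one plain union bound over subsets (to pass from a single-$\cS$ estimate of $\|Z_{\cS^c}(Z_\cS^\top Z_\cS)^{-1}Z_\cS^\top\beps_\cS\|$ to the maximum, exploiting independence of $Z_{\cS^c}$ from $(Z_\cS,\beps_\cS)$) --- the $e^{-n/2}$ term enters through $\|\beps\|$ in their bound on $\max_\cS\|Z_\cS^\top\beps_\cS\|$. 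Two caveats on your side: to bound $\|(I-\Pi_{TT})^{-1}\|$ cleanly you will want the identity $(I-\Pi_{TT})^{-1}=I+Z_T(Z_\cS^\top Z_\cS)^{-1}Z_T^\top$, which brings you back to the paper's $\min_\cS\sigma_p(Z_\cS)$ estimate (via Gordon) anyway; and the specific constants $6$, $2$, $72$ are artifacts of the paper's particular splitting, so your decomposition will produce comparable but not identical numbers.
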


The first term  on the right-hand side of (\ref{f}) matches (\ref{f2}) in rate; the second term is at most a constant multiple of the first and is negligible when $k \ll n$.
The constants in (\ref{f}) are sharper than those obtained by specializing the proof of Theorem \ref{lem1:conc_ineq_for_thrm1} to Model \ref{mod2}. This improvement comes from using Gaussian comparison inequalities in place of union bounds over subsets of $n-k$ samples. Intuitively, these subsets are highly correlated, making union bounds overly conservative.

For example, a key step in the proof is obtaining a uniform lower bound on $\sigma_p(Z_\cS)$. In Appendix \ref{seca3}, we show that for $t > 0$, with probability at least $1 - e^{-t^2/2}$,
\begin{align} \label{diakvfdnhinhufiihu}
     \min_{\cS \subseteq [n], |\cS| \geq n-k} \sigma_p(Z_\cS) & \geq  \sqrt{n} - \sqrt{p} -  \sqrt{3k \log \Big(\frac{en}{k}\Big)}-t .
    \end{align} 
In comparison, Theorem 4.6.1 of \cite{wainwright} (restated as Lemma \ref{Lem:maxeig2} in the Appendix) combined with a union bound yields, under the same probability guarantee,
\[ \min_{\cS \subseteq [n], |\cS| \geq n-k} \sigma_p(Z_\cS)  \geq  \sqrt{n-k} - \sqrt{p} - 2 \sqrt{k \log \Big(\frac{en}{k}\Big)} - t .\]
Our bound is a strict improvement, with a notable gain when $k \asymp n$.
Proposition 3.3 of \cite{diakonikolas2023algorithmic} gives a similar bound to (\ref{diakvfdnhinhufiihu}), without explicit constants.

Our final result is a lower bound on \( \Delta_k(\bv) \). Since \( \Delta_k(\bv) \rightarrow 0 \) as \( k/n \rightarrow 0 \) (Theorem \ref{thrm1}), we focus on the case where \( k \) is proportional to \( n \). In this regime, OLS is provably non-robust.
\begin{theorem}\label{thm:asymp_small_p_large_k}
    Let $(\bx_1, y_1), \ldots, (\bx_n, y_n) $ be i.i.d.\ samples from Model \ref{mod2}. Assume that as $n \rightarrow \infty$, $k/n \rightarrow \alpha \in (0,1/2)$ and $ p/(n-k) \rightarrow \gamma\in[0,1)$.
    Then, 
        \begin{align}
        \label{eq:asymp_small_p_large_k}
    \lim_{n \rightarrow \infty} \P \bigg( \Delta_k(\bv) \geq   \frac{1}{1-\alpha} \cdot \big\|\Sigma^{-1/2}\bv\big\| \cdot \E\big[ \varepsilon z \,\ind(\varepsilon z>q_{1-\alpha})\big] \bigg) = 1 ,
    \end{align}
    where $z\sim \mathcal{N}(0,1)$ is independent of $\varepsilon$, and $q_{1-\alpha}$ is the $(1-\alpha)$-quantile of $\varepsilon z$.
\end{theorem}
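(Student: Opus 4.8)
The plan is to lower bound $\Delta_k(\bv)$ by $\langle\widehat\bbeta-\widehat\bbeta_{[n]\setminus T^\ast},\bv\rangle$ for one explicit, data-dependent removal set $T^\ast$ of size $k$, and to identify the probability limit of this inner product. I begin with three reductions. First, the change of variables $\bx_i\mapsto\Sigma^{-1/2}\bx_i$, $\bbeta\mapsto\Sigma^{1/2}\bbeta$, $\bv\mapsto\Sigma^{-1/2}\bv$ leaves $\langle\widehat\bbeta-\widehat\bbeta_\cS,\bv\rangle$ unchanged, so we may take $\Sigma=I_p$ at the cost of replacing $\bv$ by $\Sigma^{-1/2}\bv$ (which then enters only through its norm). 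Second, writing the normal equations for $\widehat\bbeta$ and $\widehat\bbeta_\cS$ gives the exact leave-$k$-out identity $\widehat\bbeta-\widehat\bbeta_\cS=(X_\cS^\top X_\cS)^{-1}X_T^\top\br_T$ with $T=[n]\setminus\cS$ and $\br=(I-P_X)\by=(I-P_X)\beps$ the full-data residual vector; this is independent of $\bbeta$, so we may also take $\bbeta=0$, $\by=\beps$. Third, by rotational invariance of $N(0,I_p)$ we may take $\bv=\|\Sigma^{-1/2}\bv\|\,\be_1$, so that, writing $z_i:=x_{i,1}$ (a standard Gaussian independent of $\varepsilon_i$), the target becomes $\tfrac{\|\Sigma^{-1/2}\bv\|}{1-\alpha}\,\E[\varepsilon z\,\ind(\varepsilon z>q_{1-\alpha})]$ with $z\sim N(0,1)$ independent of $\varepsilon$.

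Now take $T^\ast$ to be the indices of the $k$ largest values of $w_i:=z_i\varepsilon_i$, $i\in[n]$ (these are i.i.d.\ copies of $W:=z\varepsilon$; a removal set may depend on the data, and for a lower bound may even be defined using $\Sigma$ and $\bbeta$). Write $\cS^\ast=[n]\setminus T^\ast$, $M:=X_{\cS^\ast}^\top X_{\cS^\ast}$, let $\bx_i^{\perp}\in\R^{p-1}$ collect the last $p-1$ entries of $\bx_i$, and let $s=m_{11}-\bq^\top M_{22}^{-1}\bq>0$ be the Schur complement of the $(1,1)$ entry $m_{11}=\sum_{i\in\cS^\ast}z_i^2$, where $\bq=\sum_{i\in\cS^\ast}z_i\bx_i^{\perp}$ and $M_{22}=\sum_{i\in\cS^\ast}\bx_i^{\perp}(\bx_i^{\perp})^\top$. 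Block inversion of $M$ turns the leave-$k$-out identity into
\begin{align*}
\langle\widehat\bbeta-\widehat\bbeta_{\cS^\ast},\bv\rangle \;=\; \frac{\|\Sigma^{-1/2}\bv\|}{s}\sum_{i\in T^\ast}\big(z_i-\bq^\top M_{22}^{-1}\bx_i^{\perp}\big)\,r_i ,
\end{align*}
and $r_i=\varepsilon_i-\bx_i^\top\widehat\bbeta$ splits the sum further. I then condition on $\cG:=\sigma(\{(z_i,\varepsilon_i)\}_{i\in[n]})$: the set $T^\ast$ is $\cG$-measurable, while given $\cG$ the vectors $\bx_i^{\perp}$ are i.i.d.\ $N(0,I_{p-1})$ and independent of $\cG$. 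Since $\bq$ and $M_{22}$ depend only on $\{\bx_j^{\perp}\}_{j\in\cS^\ast}$ whereas the $\bx_i^{\perp}$ with $i\in T^\ast$ are fresh, the terms $\bq^\top M_{22}^{-1}\bx_i^{\perp}$ ($i\in T^\ast$) and the ``off-diagonal'' pieces of $\bx_i^\top\widehat\bbeta$ are conditionally centered, and Hanson--Wright / quadratic-form concentration shows they contribute $o_P(1)$ to $\langle\widehat\bbeta-\widehat\bbeta_{\cS^\ast},\bv\rangle$. The surviving terms are combined using: concentration of $s$ (of order $m_{11}$, and more precisely $m_{11}\big(1-\tfrac{p}{n-k}\big)$, with $\tfrac1n m_{11}=\tfrac1n\sum_{i=1}^n z_i^2-\tfrac1n\sum_{i\in T^\ast}z_i^2\to 1-\E[z^2\ind(z\varepsilon>q_{1-\alpha})]$) and of the leverage scores $\bx_i^\top(X^\top X)^{-1}\bx_i\to\tfrac pn$; the trimmed-sum law of large numbers $\tfrac1n\sum_{i\in T^\ast}z_i\varepsilon_i=\tfrac1n\sum_{j=1}^k W_{(j)}\to\E[W\,\ind(W\geq q_{1-\alpha})]>0$ (valid since $W$ is integrable and, as $W$ is symmetric and $\alpha<1/2$, its cdf is continuous and strictly increasing at $q_{1-\alpha}>0$); and the elementary identity $\int_c^\infty(t^2-1)\phi(t)\,dt=c\phi(c)\geq0$, which upon conditioning on $\varepsilon$ gives $\E[z^2\ind(z\varepsilon>q_{1-\alpha})]\geq\P(z\varepsilon>q_{1-\alpha})=\alpha$.

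Combining these, $\langle\widehat\bbeta-\widehat\bbeta_{\cS^\ast},\bv\rangle\xrightarrow{\,p\,}c^\ast$ for an explicit constant $c^\ast$ depending only on $\alpha$, $\gamma$ and the law of $z\varepsilon$; a short algebraic check, using $\E[z^2\ind(z\varepsilon>q_{1-\alpha})]\geq\alpha$ and $\gamma<1$, shows $c^\ast\geq\tfrac{\|\Sigma^{-1/2}\bv\|}{1-\alpha}\E[\varepsilon z\,\ind(\varepsilon z>q_{1-\alpha})]$, which gives \eqref{eq:asymp_small_p_large_k}. When $\gamma=0$ this simplifies a lot: the residual correction and the Schur-complement factor drop out ($s\sim m_{11}$), and one only needs $s\leq m_{11}$ together with the trimmed-sum limit. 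I expect the main obstacle to be the decoupling step when $\gamma>0$: the residual vector $\br=(I-P_X)\beps$ couples $T^\ast$, $X_{T^\ast}$ and $X_{\cS^\ast}$, so the conditional-centering argument must be carried out with care --- for instance by first replacing $P_X$ with $P_{X_{\cS^\ast}}$ (a rank-$p$, $O(k)$-dimensional perturbation controlled uniformly via the singular-value bound \eqref{diakvfdnhinhufiihu} and an analogous upper bound on $\max_\cS\sigma_1(Z_\cS)$) before invoking conditional independence. A Frisch--Waugh--Lovell reduction to a one-dimensional regression in the partialled-out first coordinate is an alternative route to the same estimates.
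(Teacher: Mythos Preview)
Your overall strategy matches the paper's: same reductions to $\Sigma=I_p$ and $\bv\propto\be_1$, same removal set $T^\ast=\{\text{indices of the }k\text{ largest }\eps_iz_{i1}\}$, same trimmed-sum limit $\tfrac1n\sum_{i\in T^\ast}z_i\eps_i\to\E[\eps z\,\ind(\eps z>q_{1-\alpha})]$, and the same inequality $\E[z^2\ind(\eps z>q_{1-\alpha})]\geq\alpha$ (the paper states the complementary form $\E[z^2\ind(\eps z\le t)]\le\P(\eps z\le t)$). Both arguments ultimately rest on the Schur-complement / FWL structure you describe.

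The substantive difference is the starting decomposition. You use the leave-$k$-out residual formula $\widehat\bbeta-\widehat\bbeta_{\cS^\ast}=(X_{\cS^\ast}^\top X_{\cS^\ast})^{-1}X_{T^\ast}^\top\br_{T^\ast}$, which drags in the full-data residuals $r_i=\eps_i-\bx_i^\top\widehat\bbeta$ and hence the full-data $\widehat\bbeta$. This is exactly the coupling you flag as the main obstacle, and your ``conditionally centered'' claim for the off-diagonal piece $(\bx_i^\perp)^\top\widehat\bbeta_{-1}$ is not correct as stated, since $\widehat\bbeta$ depends on $\bx_i^\perp$ for $i\in T^\ast$ as well. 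The paper sidesteps this entirely by writing $\be_1^\top(\widehat\bbeta-\widehat\bbeta_{\cS^\ast})=\be_1^\top(Z^\top Z)^{-1}Z^\top\beps-\be_1^\top(Z_{\cS^\ast}^\top Z_{\cS^\ast})^{-1}Z_{\cS^\ast}^\top\beps_{\cS^\ast}$, dispatching the first term by a direct one-sided bound (Sherman--Morrison plus Gaussian--Lipschitz concentration gives it $\geq -O(n^{-1/2})$), and analyzing only the second term, which involves \emph{only} the subset $\cS^\ast$.

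For that subset term the paper packages the FWL step as an exact identity: $\be_1^\top(Z_{\cS^\ast}^\top Z_{\cS^\ast})^{-1}Z_{\cS^\ast}^\top\beps_{\cS^\ast}=\be_1^\top(Z_{\cS^\ast}^\top Z_{\cS^\ast})^{-1}\be_1\cdot\big(Z_{\cS^\ast,1}^\top P\beps_{\cS^\ast}\big)$, where $P$ projects onto the orthogonal complement of columns $2,\ldots,p$ of $Z_{\cS^\ast}$ and is, conditionally on $(\cS^\ast,Z_{\cS^\ast,1},\beps_{\cS^\ast})$, a \emph{uniform} projection onto an $(n-k-p+1)$-dimensional subspace. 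A Johnson--Lindenstrauss bound then gives $Z_{\cS^\ast,1}^\top P\beps_{\cS^\ast}\le(1-\gamma)\big(Z_{\cS^\ast,1}^\top\beps_{\cS^\ast}+\delta\|Z_{\cS^\ast,1}\|\|\beps_{\cS^\ast}\|\big)$ and $\|PZ_{\cS^\ast,1}\|^2\approx(1-\gamma)\|Z_{\cS^\ast,1}\|^2$; the two $(1-\gamma)$ factors cancel, and the final bound $Q_\alpha/(1-\alpha)$ emerges with no $\gamma$-dependence. Your block-inversion route can reach the same conclusion, but the paper's decomposition makes the $\gamma>0$ case clean rather than the acknowledged obstacle; in your framework, the fix is to replace $\widehat\bbeta$ by $\widehat\bbeta_{\cS^\ast}$ (or simply bound $\be_1^\top\widehat\bbeta=O_P(n^{-1/2})$ separately, which is essentially what the paper does) \emph{before} invoking conditional independence.
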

\noindent This result is an immediate corollary of the non-asymptotic bound in Theorem~\ref{thrm:small_p_large_k} in Appendix~\ref{appendix:LB}.

\begin{remark}
    The term $\E\big[\varepsilon z\,\ind(\varepsilon z>q_{1-\alpha})\big]$ in \eqref{eq:asymp_small_p_large_k} 
also appears in \citet{broderick2020automatic} (p.~16). 
However, in their analysis this term arises from a first-order 
Taylor approximation of the effect of removing samples, rather than a lower bound 
on the exact change $\Delta_k(\bv)$ induced in $\widehat \bbeta$. 
Their result assumes $p=1$ and $\varepsilon$ is Gaussian, whereas Theorem~\ref{thm:asymp_small_p_large_k} applies to $p \geq 1$ and sub-Gaussian noise.
\end{remark}

To gain insight into Theorem \ref{thm:asymp_small_p_large_k}, consider the simplified setting where  $\eps\sim \mathcal{N}(0,1)$.
 By Corollary 2.6 in \cite{gaunt2025asymptotic}, 
\begin{align}
    \E[\eps z \ind(\eps  z > q_{1-\alpha})] =
    \alpha \log \left(\frac{1}{\alpha}\right) (1+o_\alpha(1)) .  \nonumber
\end{align}
Substituting this into \eqref{eq:asymp_small_p_large_k}, we obtain that, with probability tending to one,
\begin{align}
\label{eq:asymp_small_p_large_k2}
\Delta_k(\bv) \geq \big\|\Sigma^{-1/2}\bv\big\| \cdot \frac{\alpha}{1-\alpha}\log\left(\frac{1}{\alpha}\right) (1+o_\alpha(1)).
\end{align}
Thus, when \(k \ll n\), \(\Delta_k(\bv)\) scales as 
$\alpha \log(1/\alpha) \asymp k/n \cdot \log(n/k)$, increasing approximately linearly with the number of removals. In this regime, heuristically making the approximation $(X^\top \spa X/n)^{-1} \approx (X_\cS^\top X_{\cS}/n)^{-1} \approx \Sigma^{-1}$,
\begin{align*}
\widehat \bbeta - \widehat \bbeta_\cS = (X^\top X)^{-1} X^\top \by - \big(X_\cS^\top X_\cS\big)^{-1} X_\cS^\top \by_\cS 
& \approx \frac{1}{n} \Sigma^{-1/2} \sum_{i \in \cS^c} \eps_i x_i .
\end{align*}
 The factor of $\log(n/k)$ in $\Delta_k(\bv)$ stems from the extreme values of \(\{\varepsilon_i x_{i1}\}_{i \in [n]}\), which are independent sub-exponential random variables. 
When $p \ll k \ll n$, this matches the order of magnitude of the upper bound given in (\ref{f}). 

We conjecture that the lower bound in \eqref{eq:asymp_small_p_large_k} is sharp, in the sense that under the assumptions of Theorem \ref{thm:asymp_small_p_large_k}, 
\[
\Delta_k(\bv) \xrightarrow{\,\,p\,\,}   \frac{1}{1-\alpha} \cdot \big\|\Sigma^{-1/2}\bv\big\| \cdot \E\big[ \varepsilon z \,\ind(\varepsilon z>q_{1-\alpha})\big] .
\]
We complement this conjecture by an empirical simulation. We generate $N=500$ i.i.d.\ datasets, each consisting of $n=1000$ i.i.d.\ samples drawn from Model \ref{mod2}, with 
 $p \in \{1, 200\}$
and $\eps \sim \mathcal{N}(0,1)$. For a range of values of the ratio $\alpha = k/n$, we compute the following  quantities:
(1) a lower bound on $\Delta_k(\bv)$ using the AMIP algorithm (\cite{broderick2020automatic}), and 
(2) an upper bound on $\Delta_k(\bv)$ using the ACRE algorithm (\cite{hopkins}).\footnote{See Section \ref{sec:ACRE} for details on ACRE.} The AMIP and ACRE bounds are data-dependent and non-probabilistic.

In Figure \ref{fig:deltak_comp_p_1},
we compare the averages of the AMIP and ACRE bounds over the $N=500$ datasets to our theoretical lower bound in \eqref{eq:asymp_small_p_large_k}. 
\comment{
The left and right panels of  Figure \ref{fig:deltak_comp_combined}  correspond to $p=1$ and  $p = 200$, respectively. 
}
For small values of $\alpha$, all bounds closely align, suggesting that the lower bound in \eqref{eq:asymp_small_p_large_k} is fairly tight.
In particular, for 
 $p= 1$ and 
$\alpha \leq .05$, the ratio of the ACRE upper bound to our lower bound is at most $1.3$.
Note that we do not plot the ACRE upper bound for $p = 200$. In moderate to high dimensions, for $\alpha \gtrsim .03$, the ACRE upper bound is quite loose and, if plotted, would dominate the scale of the figure and obscure the lower bounds.

\comment{\color{red}
We generate $N=500$ i.i.d.\ datasets, each with $n=1000$ i.i.d.\ samples from the one dimensional model
\[
    y = \beta x + \varepsilon,
\]
where $x \sim \mathcal{N}(0,1)$ and $\eps \sim \mathcal{N}(0,1)$ are independent.
For various values of the ratio $\alpha = k/n$ we compare the following three quantities:
(1) the theoretical lower bound in \eqref{eq:asymp_small_p_large_k};
(2) the lower bound computed by AMIP \citep{broderick2020automatic}; and
(3) the  ACRE upper bound  \citep{hopkins}.

Figure \ref{fig:deltak_comp_combined} (left panel) shows these bounds. 
We report the empirical means of the AMIP and ACRE bounds across 500 independent datasets.
For small values of $\alpha$, all bounds are close to each other.
This implies that for small number of sample removals the lower bound in \eqref{eq:asymp_small_p_large_k} is tight.
Specifically, the ratio between the ACRE upper bound to this lower bound is at most $1.3$ for ratio of sample removals $\leq 5 \%$.

Next, we present simulation results in a regime where $p$ is comparable to $n$.
As mentioned above, we argue that the factor $(1-\gamma)$ is an artifact of the analysis. 
To empirically evaluate this claim, we generate datasets of size $n=1000$ from the multivariate model:
\[
y= \bbeta^\top \bx+ \varepsilon,
\]
where $\bx \sim \mathcal{N}(0,I_p)$ and $\eps \sim \mathcal{N}(0,1)$ are  independent.
We set  $p=200$ and the direction $\bv = \be_1$.
For different values of $\alpha = k/n$ we compare: (i) the theoretical lower bound in \eqref{eq:asymp_small_p_large_k}; (ii) the same expression in \eqref{eq:asymp_small_p_large_k} but without the $1-\gamma$ factor); and (iii) the  AMIP lower bound.
As illustrated in Figure \ref{fig:deltak_comp_combined} (right panel), 
removing the factor $(1-\gamma)$ is still below the lower bound computed by AMIP.
This empirical result supports our conjecture that the dimension-dependent factor in \eqref{eq:asymp_small_p_large_k} is likely unnecessary.
}

\begin{figure}[t]
    \centering
    \includegraphics[width=0.49\linewidth]{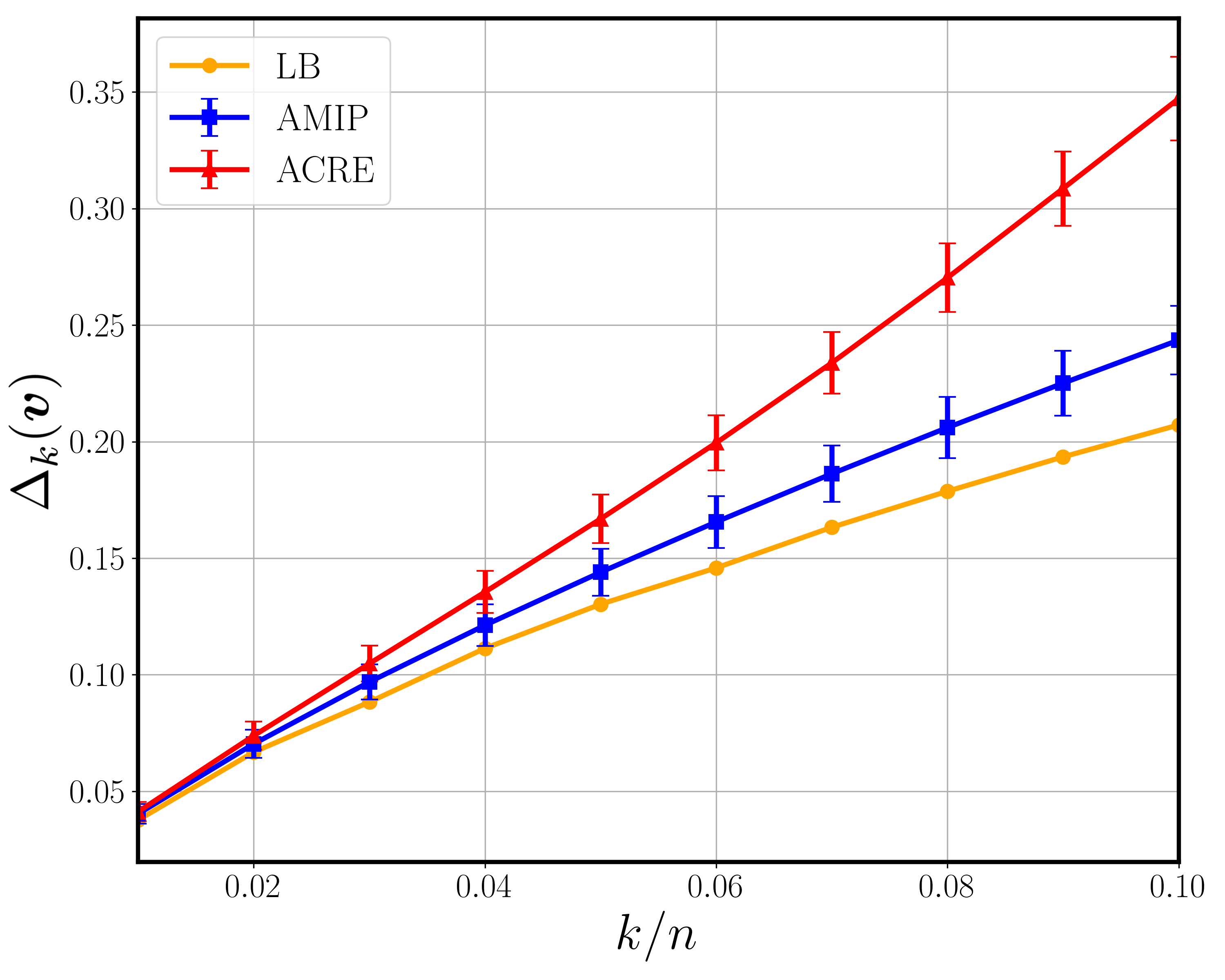}
    \includegraphics[width=0.49\linewidth]{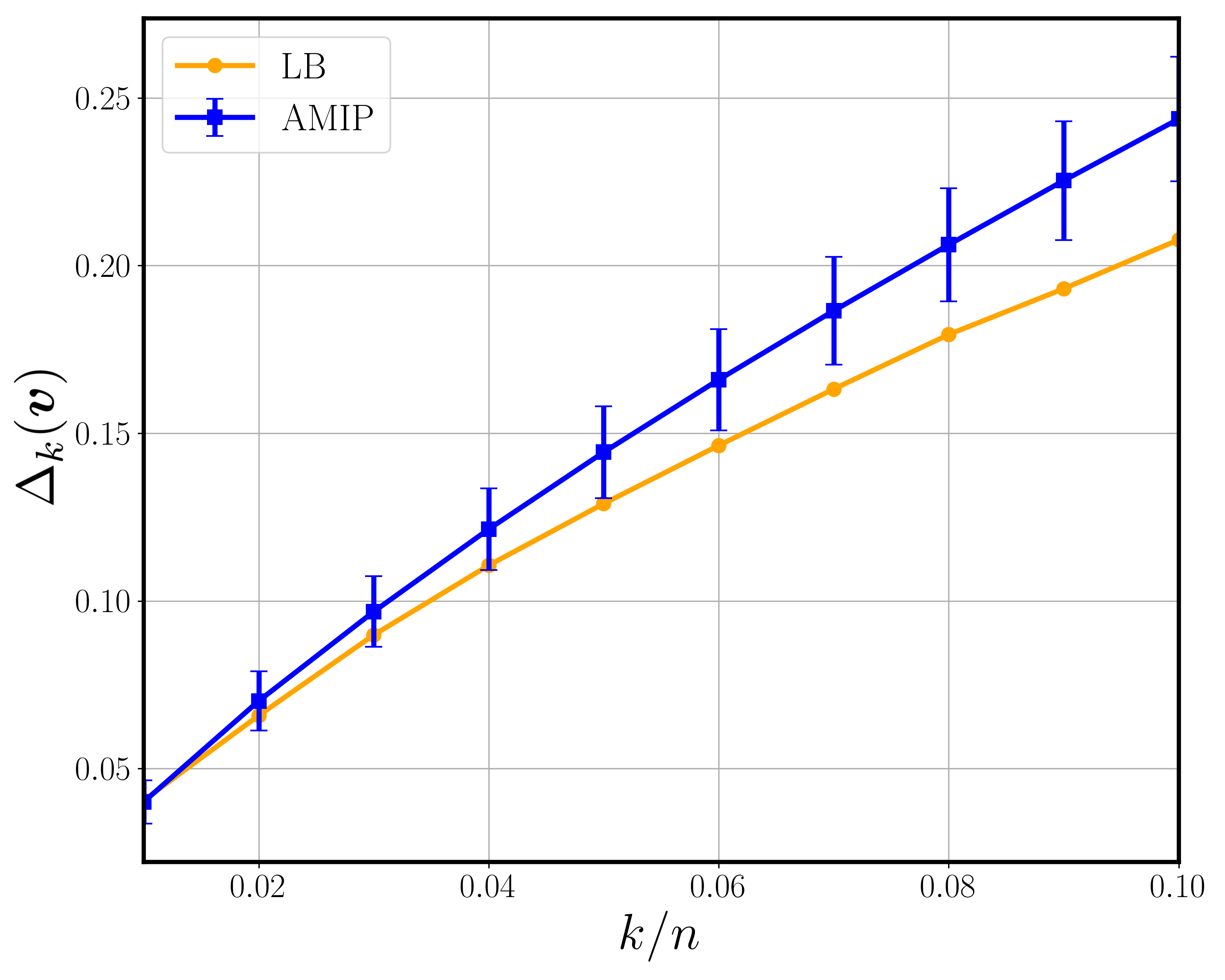} 
    \caption{Comparison of bounds on ${\Delta}_k(\bv)$ with $p=1$ (left) and $p=200$ (right). The orange curve  denotes our theoretical lower bound in \eqref{eq:asymp_small_p_large_k}, the blue curve is an empirical lower bound  computed using AMIP \citep{broderick2020automatic}, and the red curve is an empirical upper bound  computed using ACRE \citep{hopkins}. Both empirical curves are averaged over multiple i.i.d.\ datasets generated from Model~\ref{mod2}, with error bars indicating one standard deviation.  See Section \ref{sec:mod2} for details of the simulation setup.}
    \label{fig:deltak_comp_p_1}
\end{figure}


\comment{
\begin{figure}[t]
    \centering
    \begin{subfigure}[t]{0.48\linewidth}
        \centering
        \includegraphics[width=\linewidth]{figs/plot_results_p_1.png}
        \label{fig:deltak_comp_p_1}
    \end{subfigure}
    \hfill
    \begin{subfigure}[t]{0.48\linewidth}
        \centering
        \includegraphics[width=\linewidth]{figs/plot_results_p_200.png}
        \label{fig:deltak_comp_p_200}
    \end{subfigure}
    \caption{Comparison of bounds on $\Delta_k(\bv)$. The left and right panels correspond to $p=1$ and $p=200$, respectively.  The orange curve  denotes our theoretical lower bound in \eqref{eq:asymp_small_p_large_k}, the blue curve is an empirical lower bound  computed using AMIP \citep{broderick2020automatic}, and the red curve is an empirical upper bound  computed using ACRE \citep{hopkins}. Both empirical curves are averaged over multiple i.i.d.\ datasets generated from Model~\ref{mod2}, with error bars indicating one standard deviation.  See Section \ref{sec:mod2} for details of the simulation setup.}
    \label{fig:deltak_comp_combined}
\end{figure}
}






\comment{\color{red} Need some remarks about this theorem. most basic thing is, for any $t>0$ sufficiently small, is this lower bound positive? It's not immediately obvious. You need to argue something like 
\[
C\sqrt{p/n} \sqrt{\E \varepsilon^2} < \E\big[ \varepsilon z \,\ind(\varepsilon z>q_{1-\alpha})\big] / (1-\alpha)
\]
}

\subsection{Theoretical Guarantees for ACRE}\label{sec:ACRE}

Recently, \cite{hopkins} introduced 
a robustness auditing method called 
ACRE (Algorithm for Certifying Robustness Efficiently). 
For a fixed direction $\bv$,  ACRE computes upper and lower bounds $U_{k}(\bv)$ and $L_k(\bv)$ such that 
without any modeling assumptions, 
\[ L_k(\bv) \leq \Delta_k(\bv) \leq U_k(\bv) . \] 

\noindent Under  a variant of Model \ref{mod2} with slightly weaker tail conditions, \cite{hopkins} proved\footnote{The left-hand bound in (\ref{acre}) follows from the proof of Claim 7.2 in \cite{hopkins}, which assumes the normalization $X^\top X = I_p$. Translating to our setting introduces a factor of $\sqrt{n}$ in the denominator relative to the penultimate displayed equation in their proof.
}
that
 there exists a threshold $K = \widetilde O\big( \min(n/\sqrt{p}, n^2/p^2)\big)$
 such that for any $k \leq K$, with high probability,
\begin{align} \label{acre}
& U_k(\bv) = \widetilde O \bigg(\frac{k}{n} + \frac{kp + k^2\sqrt{p}}{n^{2}} \bigg) , & \frac{U_k(\bv)}{L_k(\bv)} = 1 + \widetilde O\bigg(\frac{p+k\sqrt{p}}{n} \bigg)  . 
\end{align}
 Here,  $\widetilde O$ suppresses polylogarithmic factors in $n$. In contrast to (\ref{acre}), Theorem \ref{lem1:conc_ineq_for_thrm1} (1) pertains to Model \ref{mod1} rather than a linear model, (2) permits $k \leq n/2$, and (3) holds for all directions $\bv \in \mathbb{S}^{p-1}$ simultaneously (that is, we bound $\sup_{\bv \in \cS^{p-1}} \Delta_k(\bv)$).  The rate in Theorem \ref{lem1:conc_ineq_for_thrm1} matches the bound on $U_k(\bv)$ in (\ref{acre}) under either of the following conditions: (1) $k \gtrsim p$, or (2) $k \ll p$ and $n \lesssim \sqrt{kp} + k^{3/2}$. The rate is superior under (1) $k \gtrsim p$ and $n \ll k\sqrt{p}$, or (2) $k \ll p$ and $n \ll \sqrt{kp} + k^{3/2}$. Note that efficiently computable bounds on $\Delta_k(\bv)$, such as $U_k(\bv)$, are not expected to be rate-optimal.
 
Rubinstein and Hopkins interpret (\ref{acre})  to mean that when $p+k\sqrt{p} \ll n$, the upper and lower bounds $U_k(\bv)$ and $L_k(\bv)$ are tight and provide a reasonable proxy for the degree of robustness. Our theoretical results demonstrate that OLS is {\it provably} robust to removals in a broader parameter regime characterized by $p + k \ll n$.  
An interesting open question is whether the upper and lower bounds of ACRE remain tight in more general regimes, in particular where OLS is non-robust.


\section{Robustness of Cash Transfers Data} \label{sec:exp}

\begin{table*}[t]
    \caption{
    Statistics of the six Cash Transfers datasets, corresponding to three time periods and two groups (poor/non poor). 
    Column 6 (\texttt{1-Greedy})  is the 
    size of the smallest subset identified by \texttt{1-Greedy}
    whose removal changes the sign of $\widehat\beta_1$.
    Columns 7 ($\mu_y$) and 8 ($\sigma_y$) are the empirical mean and standard deviation of the response $y$. 
    Columns 9 ($>\hspace{-.2em}5\sigma_y$) and 10 ($>\hspace{-.2em}10\sigma_y$) are the number of samples that deviate from $\mu_y$ by at least $5 \sigma_y$ and $10\sigma_y$.
    Columns 11 ($\mu_{y}^{{\texttt{1-G}}}$) and 12 ($y_{\max}^{\texttt{1-G}}$) are the average and maximum $y$-values of the subset found by \texttt{1-Greedy}.
  }
  \label{tab:cash_transfers_stat}
  \centering
  \begin{tabular}{cccccccccccc}
    \toprule
    Dataset & Period & Poor & $n$ & $\widehat\beta_1$ & \texttt{1-Greedy} & $\mu_y$ & $\sigma_y$ & $ > \hspace{-.2em}5\sigma_y$ &  $ > \hspace{-.2em}10\sigma_y$  &  $\mu_{y}^{\texttt{1-G}}$ & $y_{\max}^{\texttt{1-G}}$ \\
    \midrule
    1 & 8  & Y & 10781 & 16.53 & 224 & 170 & 126 & 48 & 12 &  584  & 4380 \\
    2 & 8  & N & 4543  & -5.53 & 5   & 219 & 172 & 29 & 5  &  2018 & 2483 \\
    3 & 9  & Y & 9489  & 28.65 & 314 & 176 & 182 & 48 & 15 &  622  & 5117 \\
    4 & 9  & N & 3769  & 23.19 & 21  & 226 & 273 & 20 & 9  &  2670 & 5801 \\
    5 & 10 & Y & 10368 & 32.52 & 555 & 172 & 156 & 56 & 13 &  450  & 5080 \\
    6 & 10 & N & 4191  & 21.12 & 26  & 217 & 267 & 19 & 7  &  2154 & 7470 \\
    \bottomrule
  \end{tabular}
\end{table*}

In this section, we revisit prior analyses of OLS robustness in the context of a widely cited econometric study on cash transfers by \citet{Angelucci09}.
A corresponding robustness analysis of the Nightlights dataset \citep{martinez2022much} appears in the Appendix.
The Cash Transfers data was retrieved from the repository provided by \citet{hopkins}.\footnote{
https://github.com/ittai-rubinstein/ols\_robustness
} We follow the same data pre-processing steps as in Appendix C.3 of \cite{hopkins}, including the removal of a small number of specific observations.


 \citet{Angelucci09} investigated the economic impact  of Mexico's Progresa aid program on 506 rural villages. Among these villages, 320 were randomly selected to participate in the program, while the remaining 186  served as a control group.  
In participating villages, financial aid  was given via cash transfers to eligible poor households. This also had an indirect effect on non-poor households in the same villages.

The authors estimated the effects of the program on both poor and non-poor households by a linear regression approach. 
%
Specifically, they constructed six separate datasets based on the household status (poor or non-poor) and three distinct time periods.
For each dataset, they fitted the following linear  model:
\[
    y = \beta_0 + \beta_1 x_1 + \sum_{j=2}^{17} \beta_j x_j,
\]
where $y$ is the total household consumption in pesos, $x_1$  is a binary treatment variable (indicating whether the corresponding village participated in Progresa), and $x_2, \ldots, x_{17}$  are additional covariates of each household.
Hence, the coefficient $\beta_1$ captures the effect of the Progresa aid program. 
Table \ref{tab:cash_transfers_stat}
provides a summary of these datasets.
Column 5 gives the estimated treatment effect.  As expected, it is larger for poor households (directly affected by Progresa) than for non-poor households (indirectly affected).

\citet{broderick2020automatic}, \citet{hopkins}, and \citet{kuschnig2021hidden} showed that, for three of the six datasets, removing a small number of observations substantially alters the estimated OLS coefficients. 
Column~6 in Table \ref{tab:huber} reports the size of the minimal subset whose removal changes the sign of the estimated treatment effect, as identified by the auditing method proposed by \citet{kuschnig2021hidden}, which we term \texttt{1-Greedy}. 
For example, in dataset~2, just five removals (\(0.11\%\)) change the sign of \(\widehat{\beta}_1\), 
and in dataset~4, 21 removals (\(0.55\%\)) suffice.


\citet{huang2024approximations} showed empirically that \texttt{1-Greedy} provides the tightest upper bound on the minimum number of removals required to change the sign of \(\widehat \beta_1\), compared to AMIP and ACRE. The algorithm operates as follows. At step \(k\), it iterates over the \(n - k\) remaining observations, removes each one individually, and computes the corresponding regression coefficients. It then removes the sample whose deletion induces the largest decrease in magnitude in the regression coefficient of interest. This procedure continues until the coefficient changes sign.

\begin{table*}[t]
  \caption{
    Comparison of OLS and Huber regression under sample removal stress tests using the \texttt{1-Greedy} algorithm.
    Each row corresponds to a different dataset.
    Columns $\widehat\beta_1$ and $\widehat \beta_1^{\text{Huber}}$ report the estimated treatment effect under standard OLS and Huber regression, respectively.
    Columns “\texttt{1-Greedy} {OLS}” and “\texttt{1-Greedy} Huber” give the size of the smallest subset identified by \texttt{1-Greedy} whose removal reverses the sign of $\widehat\beta_1$ and $\widehat\beta_1^{\text{Huber}}$, respectively.}
  \label{tab:huber}
  \centering
  \begin{tabular}{cccccccc}
    \toprule
    Dataset &Period & Poor & $n$ & $\widehat\beta_1$ & \texttt{1-Greedy} {OLS}  & $\widehat{\beta}_1^{\text{Huber}}$ & \texttt{1-Greedy} Huber \\
    \midrule
     1 & 8  & Y  & 10781 & 16.53 & 224 & 14.16  & 570  \\
     2 & 8  & N  & 4543  & -5.53 & 5   & -0.52  & 5   \\
     3 & 9  & Y  & 9489  & 28.65 & 314 & 22.13  & 817 \\
     4 & 9  & N  & 3769  & 23.19 & 21  & 11.21  & 124 \\
     5 & 10 & Y  & 10368 & 32.52 & 555 & 26     & 1145  \\
     6 & 10 & N  & 4191  & 21.12 & 26  & 13.42  & 162  \\
    \bottomrule
  \end{tabular}
\end{table*}

\subsection{Heavy-tailed Responses and Non-robustness
of OLS}

The empirical findings presented above seem  to contradict our theoretical results on the robustness of OLS to $k \ll n$ removals. 
As we now discuss, this seeming discrepancy is due to the heavy-tailed distribution 
of the response.
Columns 7 and 8 of Table \ref{tab:cash_transfers_stat} report the mean $\mu_y$ and standard deviation $\sigma_y$ of the response $y$ across the six datasets. 
For each household type (poor/non-poor) the means are similar across the three time periods, with consumption in non-poor households roughly 50 pesos higher than in poor ones. 
Crucially, columns 9 and 10 show that each dataset contains a non-negligible number of samples whose response $y$ deviates by at least 5 or even 10 standard deviations from the mean. In simple terms, the response $y$ is quite heavy-tailed.

{
Next, we examine the response values of the few samples selected for removal by \texttt{1-Greedy}. 
As shown in column~11 of Table~\ref{tab:cash_transfers_stat}, these samples have \emph{extremely large} \(y\)-values, with many exceeding \(\mu_y\) by more than 5 standard deviations. 
Moreover, column~12 shows that the maximal selected \(y\) value is between 13 and 33 standard deviations above \(\mu_y\)! 
It is thus unsurprising that these subsets of samples have an outsized effect on the fitted model.
 We remark that in 
dataset 2
---where only 5 removals suffice to reverse the sign of $\widehat \beta_1$---the estimated treatment effect computed on the full data
was to begin with not statistically significant, even at the 10\% level (Table 1 of \citet{Angelucci09}). 
}

\subsection{Robustness under the Huber Loss} \label{subsec:huber_exp}

For datasets with a heavy-tailed response, a standard approach is to fit a regression model using a robust loss function, such as Huber or \(L_1\), rather than ordinary least squares.
In this section, we study the robustness of this approach to sample removals in the six Cash Transfers datasets.

Given a training set $(\bx_1,y_1),\ldots,(\bx_n,y_n)$, assuming a linear model, this method
solves the following optimization problem: 
\begin{equation}
\widehat{\bm{\beta}}^{\text{Huber}} \coloneqq \argmin_{\bb \in \mathbb{R}^p} \sum_{i=1}^n h_\tau (\bb^\top \spa \bx_i - y_i)
    \label{eq:Huber}
\end{equation}
where $h_\tau$ is the Huber loss function, given by 
%
\begin{align}
    \label{huber:} 
    h_\tau(z) \coloneqq
    \begin{dcases}
        \frac{z^2}{2} & | z | \leq \tau, \\
        \tau \Big( | z | - \frac{\tau}{2}\Big) & |z| > \tau . 
    \end{dcases}
\end{align}
{Here, $\tau > 0$ controls the transition from squared loss to absolute loss.}
{
In OLS, observations with larger residuals exert greater influence on the fitted coefficients, and their removal may significantly perturb the model. In contrast, the Huber loss $h_\tau$ clips the derivative of the residual at level $\tau$, limiting the extent to which large-residual observations can dominate the fit and making the estimator more stable to their removal.
}

{
For each of the six datasets, we assessed their robustness as follows: (1) compute $\widehat{\bm{\beta}}^{\text{Huber}}$, the solution of \eqref{eq:Huber} on the full data with $\tau=1$; 
(2) run \texttt{1-Greedy} with the Huber loss to identify a subset of  samples whose removal reverses the sign of $\widehat\beta_1^{\text{Huber}}$.
}

{

As shown in Table~\ref{tab:huber}, 
across all datasets, $\widehat \beta_1$ under Huber regression has the same sign as under OLS, though typically with slightly smaller magnitude. Importantly, with Huber regression, the subsets identified by \texttt{1-Greedy} are considerably larger than for OLS. For instance, in dataset 6, reversing the sign of $\widehat \beta_1$ requires removing 162 samples, compared with just 26 for OLS.
The only exception is the second dataset, where 5 removals suffice to flip the sign of $\widehat \beta_1$.
However, as noted by \citet{Angelucci09}, this coefficient is not statistically significant to begin with.
}

{
We remark, however, that the empirical advantage of Huber regression is tied to the presence of heavy tails or extreme outliers. For ``well-behaved,'' light-tailed data, Huber regression need not provide additional robustness to adversarial sample removals. 
Empirical evaluations under Model \ref{mod2} suggest that for such data, the worst-case sample removal perturbation for Huber regression exhibits the same dependence on $k$ as our theoretical bounds for OLS derived in Remark~\ref{rem:ub_delta_k(V)}.
}

In summary, assuming the \texttt{1-Greedy} approximation is reasonably accurate, in the presence of heavy-tailed data, Huber regression can be substantially more robust than OLS to sample removals. A theoretical investigation of the properties of such estimators under sample removals is an interesting direction for future work.

\nocite{*}
\bibliographystyle{apalike} 
\bibliography{bibliography.bib}

\appendix

\newpage

\section{Robustness Auditing for the Nightlights Data}

This section presents empirical results on the robustness to sample removals for the Nightlights dataset 
\citep{martinez2022much}, and contrasts them with the prior analyses of \cite{martinez2022much} and \citet{hopkins}. 
Martinez constructed the Nightlights data to investigate the potential misreporting of Gross Domestic Product (GDP) in autocratic versus democratic regimes. His approach involved comparing official GDP figures with satellite-based measurements of nighttime lights (NTL), which serve as a widely recognized proxy for economic activity. To assess the level of democracy in each country, Martinez used the \emph{Freedom in the World (FiW)} index, which ranges from 0 to 6, with higher values indicating less democratic regimes.
The constructed dataset covers 184 countries over the 22-year period 1992–2013, for a total of $n=3895$ samples.

Next, the following regression model was fit, 
 where $i$ denotes a country and $t$ a specific year: 
\begin{equation}
\label{eq:martinez}
\ln(\text{GDP})_{i,t} = \mu_i + \delta_t + \beta_0 \ln(\text{NTL}_{i,t}) + \beta_1 \text{FiW}_{i,t} + \beta_2 \text{FiW}^2_{i,t} + \beta_3 \big[ \ln(\text{NTL}) \times \text{FiW} \big]_{i,t}.
\end{equation}
Here, $\text{GDP}_{i,t}$, $\text{NTL}_{i,t}$ 
and $\text{FiW}_{i,t}$ represent, respectively,
the self reported GDP, the average of nightlight satellite measurements, and the FiW index for country $i$ in year $t$. The parameters $\mu_i$ and $\delta_t$ are country and year fixed effects.
The coefficients $\beta_0, \beta_1,$ and $\beta_2$ capture baseline dependencies between GDP, NTL, and the level of political freedom,
while $\beta_3$ is a measure of the overstatement
of GDP in autocratic countries. 
Via ordinary least squares, \citet{martinez2022much} estimated $\widehat\beta_3\approx 0.021$, which corresponds to a 35\% overestimation of GDP growth in autocracies. 

\begin{figure}[t]
    \centering
    \subfloat{
        \includegraphics[width=0.48\textwidth]{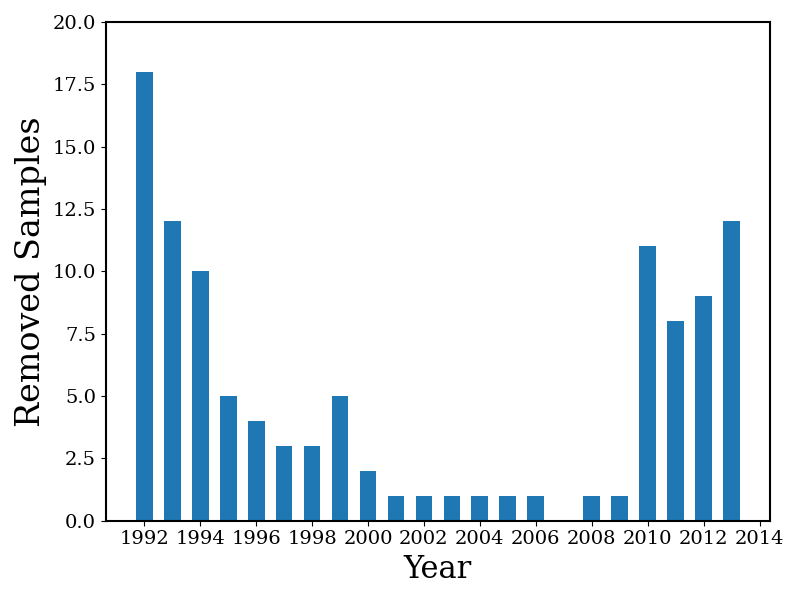}
    }
    \hfill 
    \subfloat{
        \includegraphics[width=0.48\textwidth]{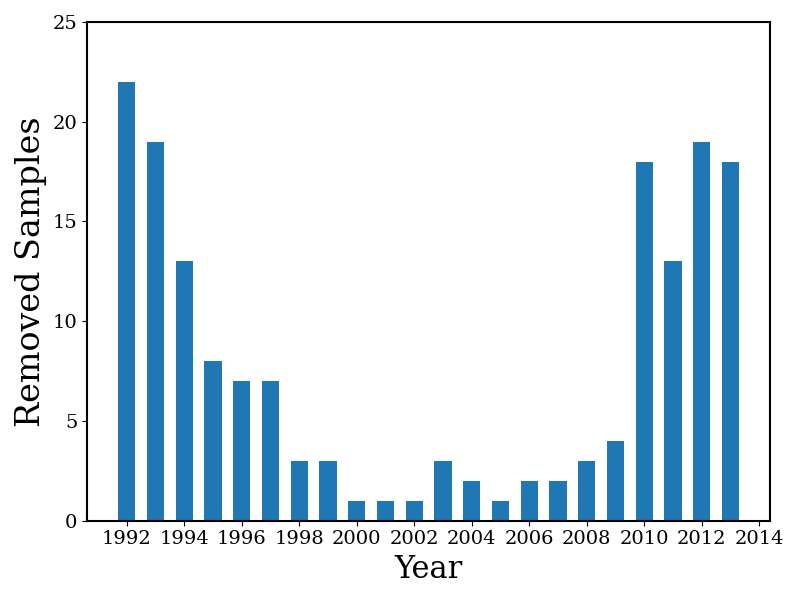}
    }
    \caption{Number of samples per year identified for removal by the \texttt{1-Greedy} method under OLS (left) and Huber regression (right). In both cases, removals are highly non-uniform, with a greater concentration in the early and late years of the sample period.}
    \label{fig:ols_huber_vs_time}
\end{figure}
To assess the stability of his findings, \citet{martinez2022much} applied the AMIP algorithm  \citep{broderick2020automatic}, which
identified a subset of 136 observations whose removal reverses the sign of the estimate of $\beta_3$. 
Applying the  \texttt{1-Greedy} method, 
\citet{hopkins} identified a smaller subset of 110 observations (less than $3\%$ of samples)  whose removal reverses the sign.

Here, we assess the robustness of the regression model \eqref{eq:martinez} using Huber loss in place of the standard $L_2$ loss.
With Huber regression (using $\tau=1$), we obtain
$\widehat{\beta}_3^{\text{Huber}} = 0.0184$, which is slightly smaller than the corresponding OLS estimate.
Importantly, Huber regression is considerably more robust: using the \texttt{1-Greedy} method, 170 samples need to be removed to reverse the sign of the estimate  of $\beta_3$, representing an increase of over $50\%$ compared to the 110 observations required for OLS (as reported by \citet{hopkins}).

It is worth examining the composition of the 170 samples identified for Huber regression, and to compare them with the 110 samples identified for OLS.
Recall that each of the $n=3895$ observations corresponds to a specific (country, year) pair. Consequently, the samples exhibit dependence both over time within a given country and across countries within a given year.
Figure~\ref{fig:ols_huber_vs_time} displays the distribution of the removed samples as a function of year, for both Huber regression and OLS. As expected given the correlation structure of the data, the removals are highly non-uniform in time, with a disproportionate concentration in the earliest and latest years of the study period.
Moreover, as Martinez noted, the removed samples are also highly unevenly distributed across the 184 countries, with only a small number of countries accounting for a large fraction of the removals.

A more nuanced picture emerges by examining the democratic levels of the countries contributing the largest number of removed samples. We focus on countries with at least five samples marked for removal and follow the classification scheme of \citet{martinez2022much}, which divides countries into three categories based on their FiW index: free $(\text{FiW}<2)$, partially free $(2\le \text{FiW}\le 4)$, and not free $(\text{FiW}>4)$.
As shown in Figure~\ref{fig:ols_fiw} (left), under OLS there are eight such countries, with a relatively balanced distribution across democratic categories (three not free, two partially free, and three free).
In contrast, Figure~\ref{fig:ols_fiw} (right) shows that under Huber regression there are thirteen such countries, and their distribution is markedly more skewed toward autocracies (eight not free, three partially free, and two free).
This pattern may merit further investigation beyond the scope of this paper.
%

\begin{figure}[ht]
    \centering
    \includegraphics[width=0.99\linewidth]{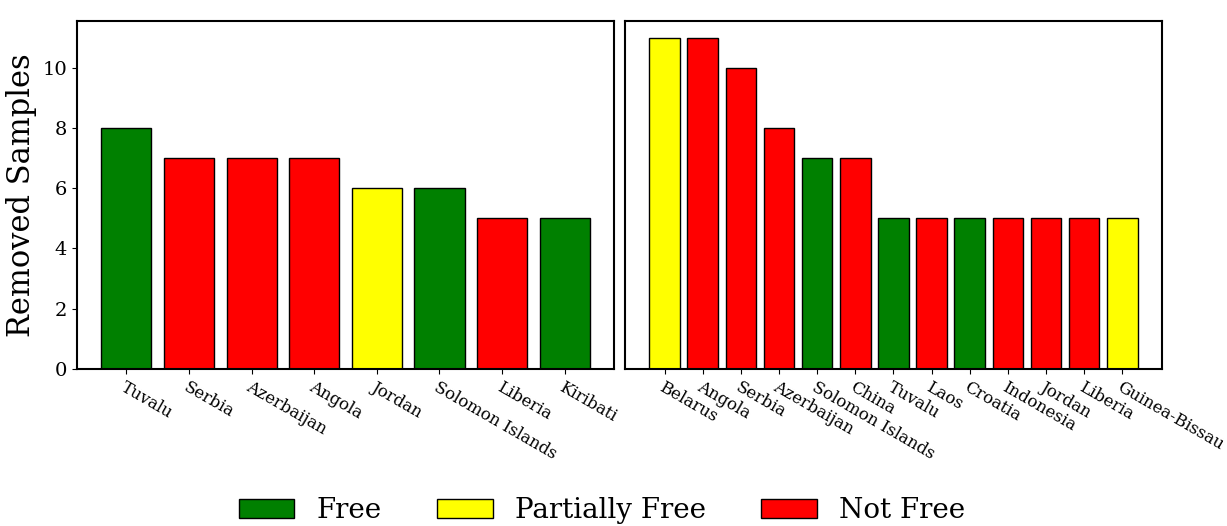}
    \caption{Number of samples removed per country by the \texttt{1-Greedy} OLS. 
    This removal set of 110 samples is sufficient to flip the sign of $\widehat\beta_3$. Countries are classified by their average FiW index (1992–2013).}
\label{fig:ols_fiw}
\end{figure}

{\section{Robustness Auditing for the Diabetes Dataset}
\label{appendix:diabetes}

This section presents an empirical analysis of robustness to sample removals using the well-known Diabetes dataset \citep{efron2004least}, which has become a standard benchmark in statistics and machine learning for evaluating linear regression methods.
The dataset consists of  $n=442$ patients, each described by $p=10$ baseline features: age, sex, body mass index (BMI), average blood pressure, and six blood serum measurements. 
The response variable $y$ is a continuous, quantitative measure of disease progression one year after the baseline measurements. 
As is standard practice for this dataset, the features are mean-centered and standardized.

Unlike the Cash Transfers and Nightlights datasets considered previously, the Diabetes dataset appears relatively ``well-behaved'', 
allowing us to compare the empirical behavior of $\Delta_k(\bv)$ with the prediction of Theorem~\ref{lem1:conc_ineq_for_thrm1}.
Empirically, both the features and the response have light tails; Figures~\ref{fig:histograms} and ~\ref{fig:diabetes_y_greedy} (left) show histograms of nine  features and the response, respectively. The remaining feature, sex, is categorical, with approximately $53\%$ of patients male.
While the features exhibit moderate collinearity, with $\kappa(\Sigma_n) \approx 470$, this affects the prefactor in (\ref{f2}) in Theorem~\ref{lem1:conc_ineq_for_thrm1}, but not the predicted dependence on the number of removed samples $k$.

Following the standard setup for this dataset, we fit the following OLS model:
\begin{align}
 y = \beta_0 + \beta_1 x_1 + \sum_{j=2}^{10} \beta_j x_j,
\end{align}
where $x_1$ denotes BMI and $\{x_j\}_{j=2}^{10}$ denote the remaining features.

Recall that, in the setting of Theorem~\ref{lem1:conc_ineq_for_thrm1}, Remark~\ref{rem:ub_delta_k(V)} states the high-probability bound
\begin{align}
      \Delta_k(\bv)
  \leq
  C \eta \cdot\frac{k}{n}\log\frac{en}{k}. \nonumber
\end{align}

\noindent To empirically validate this rate, we ran the \texttt{1-Greedy} algorithm proposed by \cite{kuschnig2021hidden} and described in Section \ref{sec:exp}. We evaluated this algorithm for $k = 1, 2, \ldots, \lfloor 0.1n \rfloor = 44$, with $\bv = e_1$ (corresponding to BMI).
Figure~\ref{fig:diabetes_y_greedy} (right) compares the \texttt{1-Greedy} lower bound with $C(k/n)\log(en/k)$, where $C$ is chosen for visual alignment. The two curves exhibit similar scaling, consistent with our theoretical prediction that $\Delta_k(\bv)$ scales as $(k/n)\log(en/k)$.

\begin{figure}[htbp]
  \centering
  \includegraphics[width=0.6\textwidth]{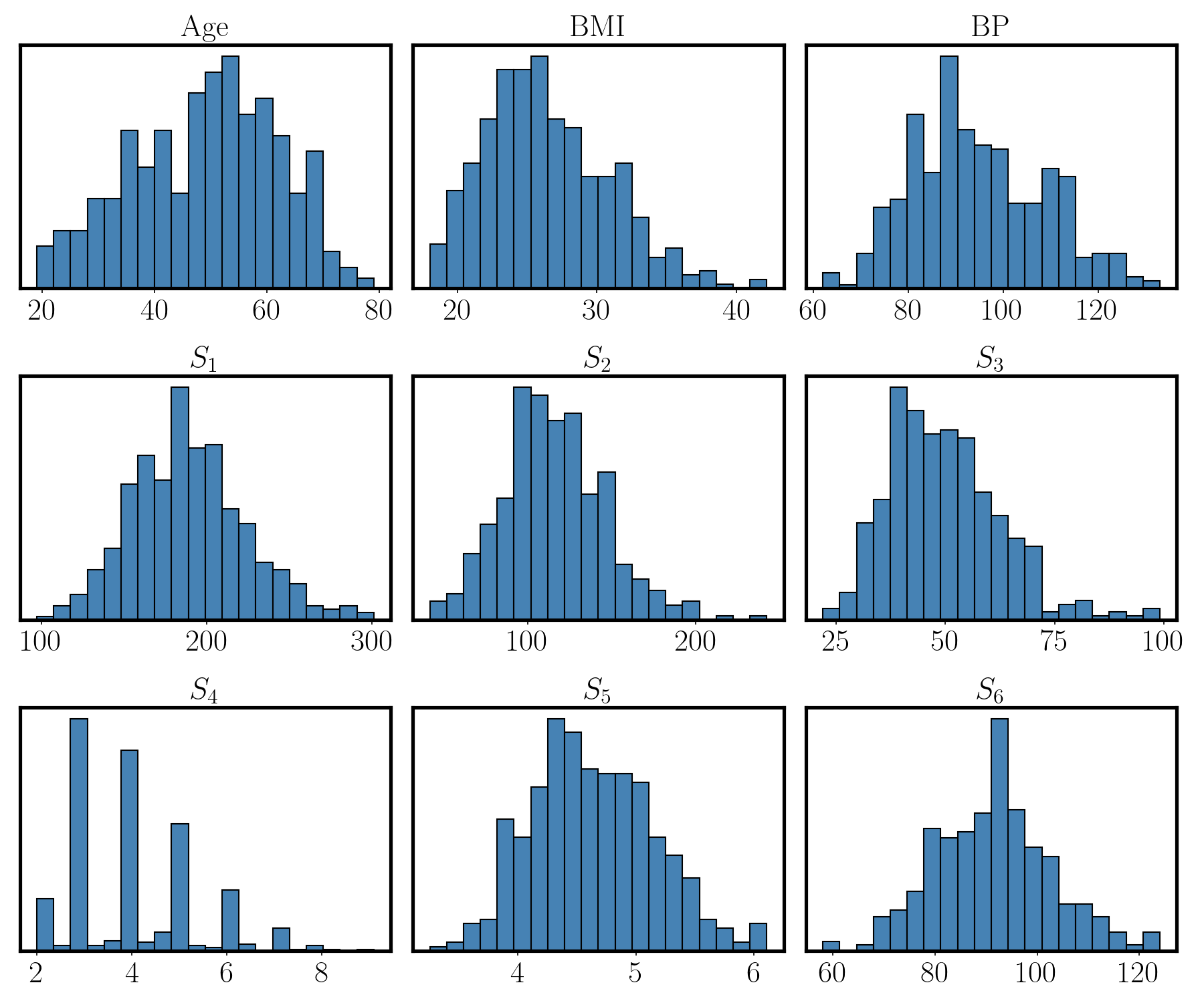}
  \caption{Histograms of the 9 continuous features in the Diabetes dataset \citep{efron2004least}, shown in raw unstandardized units.
    }
  \label{fig:histograms}
\end{figure}

\begin{figure}[!t]
  \centering
  \raisebox{0.2\height}{\includegraphics[width=0.48\textwidth]{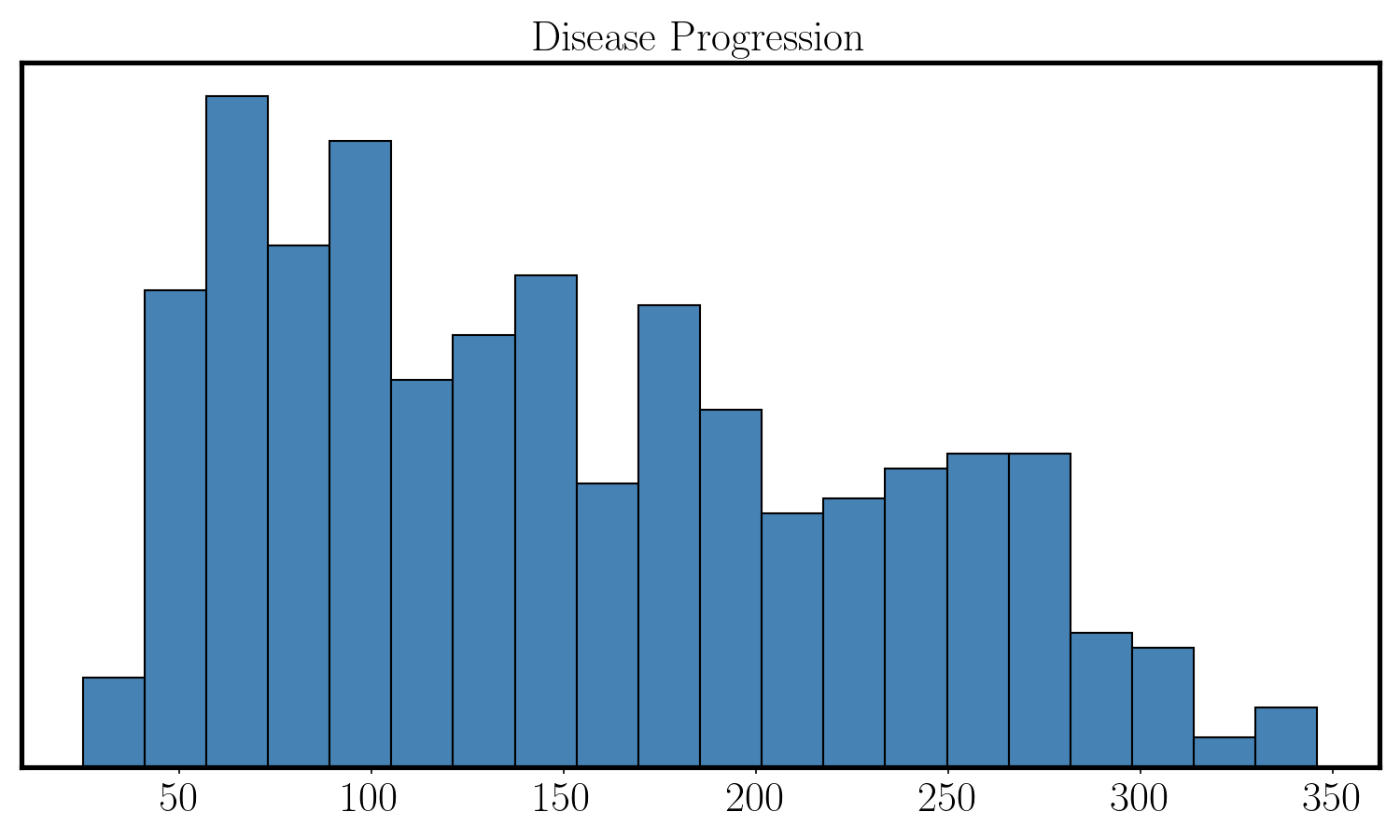}}
  \hfill
  \includegraphics[width=0.48\textwidth]{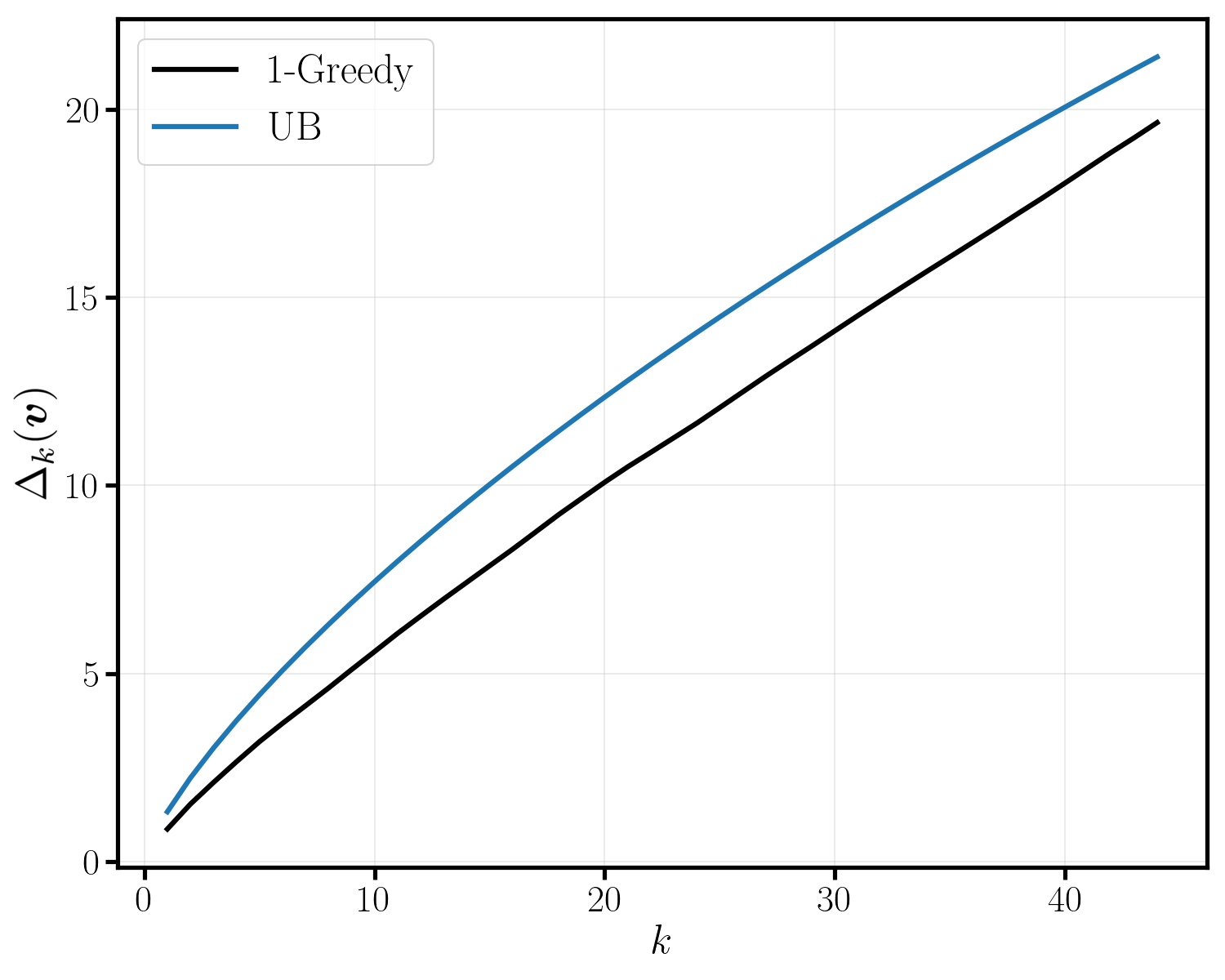}
  \caption{Diabetes dataset \citep{efron2004least}. Left: histogram of the response variable $y$, measuring disease progression. Right: empirical \texttt{1-Greedy} lower bound on $\Delta_k(\bv)$ (black), compared with the curve $C(k/n)\log(en/k)$ (blue), with $C=60$.}
  \label{fig:diabetes_y_greedy}
\end{figure}

\newpage
\section{Proof of Theorems \ref{lem1:conc_ineq_for_thrm1}--\ref{thrm:linmodel_nonasymp}}
\label{AppendixA}



Throughout,  $C,c > 0$ denote absolute constants whose values may vary from line to line. 

\subsection{Auxiliary Lemmas}

Our proofs involve the whitened data matrix $Z \coloneqq  X \Sigma^{-1/2} \in \mathbb{R}^{n \times p}$. 
Under Model \ref{mod1}, the rows of this matrix, which we denote by $\bz_1,\ldots, \bz_n$, are i.i.d.\ sub-Gaussian vectors with norm $\|\Sigma^{-1/2} \bx\|_{\psi_2}$ and identity covariance.
For non-empty $\cS \subseteq [n]$, define $Z_\cS$ analogously to $X_\cS$. 
For future use, we state a bound on the singular values of $Z_\cS$: 

\begin{lemma}[Theorem 4.6.1 of \cite{Vershynin12}]\label{lem:maxeig} {Let $\cS \subseteq [n]$ be a deterministic, non-empty set.} Under condition (i) of Model \ref{mod1},  for any $t > 0$,
\begin{equation*} 
 \begin{aligned} 
   & \P \bigg(\sigma_1(Z_\cS) \geq   \sqrt{|\cS|} + C\|\Sigma^{-1/2} \bx\|_{\psi_2}^2(\sqrt{p} +  t) \bigg) \leq e^{- t^2} , \\ & 
       \P \bigg(\sigma_p(Z_\cS) \leq   \sqrt{|\cS|} - C\|\Sigma^{-1/2} \bx\|_{\psi_2}^2(\sqrt{p} +  t) \bigg) \leq e^{-t^2} .
       \end{aligned}    
 \end{equation*}
\end{lemma}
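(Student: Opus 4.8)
The statement is the classical two-sided bound on the extreme singular values of a matrix with independent, isotropic, sub-Gaussian rows, so the natural route is the $\epsilon$-net argument. Write $K \coloneqq \|\Sigma^{-1/2}\bx\|_{\psi_2}$, $m \coloneqq |\cS|$, and recall that the rows $\bz_i = \Sigma^{-1/2}\bx_i$, $i \in \cS$, are i.i.d., mean zero, sub-Gaussian with $\|\bz_i\|_{\psi_2}\le K$ and $\E[\bz_i\bz_i^\top] = I_p$. Since $\sigma_1(Z_\cS)^2 = \sup_{\bv\in\mathbb{S}^{p-1}} \sum_{i\in\cS}\langle\bz_i,\bv\rangle^2$ and $\sigma_p(Z_\cS)^2 = \inf_{\bv\in\mathbb{S}^{p-1}} \sum_{i\in\cS}\langle\bz_i,\bv\rangle^2$, both bounds reduce to controlling $\big\|\tfrac1m Z_\cS^\top Z_\cS - I_p\big\|_{\op} = \sup_{\bv\in\mathbb{S}^{p-1}}\big|\tfrac1m\sum_{i\in\cS}(\langle\bz_i,\bv\rangle^2 - 1)\big|$.

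First I would fix $\bv \in \mathbb{S}^{p-1}$: then $\langle\bz_i,\bv\rangle$ is mean-zero sub-Gaussian with norm $\le K$, so $\langle\bz_i,\bv\rangle^2 - 1$ is mean-zero sub-exponential with $\psi_1$-norm $\lesssim K^2$, and Bernstein's inequality yields, for $u > 0$,
\[
\P\Big( \big|\tfrac1m\textstyle\sum_{i\in\cS}(\langle\bz_i,\bv\rangle^2 - 1)\big| \ge u \Big) \le 2\exp\!\big(-c\,m\min(u^2/K^4,\, u/K^2)\big).
\]
Next I would introduce a $\tfrac14$-net $\mathcal{N}$ of $\mathbb{S}^{p-1}$ with $|\mathcal{N}| \le 9^p$, apply the standard approximation inequality $\|A\|_{\op} \le 2\max_{\bv\in\mathcal{N}}|\bv^\top A\bv|$ valid for symmetric $A$, and union bound over $\mathcal{N}$. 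Calibrating $u = \max(\delta,\delta^2)$ with $\delta \coloneqq C' K^2\big(\sqrt{p/m} + t/\sqrt m\big)$ makes $c\,m\min(u^2/K^4, u/K^2) \ge c'(p + t^2) \ge \log(2\cdot 9^p) + t^2$ after enlarging $C'$, so that $\big\|\tfrac1m Z_\cS^\top Z_\cS - I_p\big\|_{\op} \le \max(\delta,\delta^2)$ with probability at least $1 - e^{-t^2}$.

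Finally I would translate this operator-norm bound back into singular values. On the above event, $\sigma_1(Z_\cS) \le \sqrt m\,\sqrt{1+\max(\delta,\delta^2)} \le \sqrt m\,(1+\delta)$ and $\sigma_p(Z_\cS) \ge \sqrt m\,\sqrt{1-\max(\delta,\delta^2)} \ge \sqrt m\,(1-\delta)$, the latter being vacuous (hence trivially valid) when $\delta \ge 1$; here I use $\sqrt{1+x}\le 1+x/2$ and $\sqrt{1-x}\ge 1-x$ for $x\in[0,1]$, and $\sqrt{1\pm\delta^2}\le 1+\delta$ when $\delta\ge 1$. Since $\sqrt m\,\delta = C'K^2(\sqrt p + t)$, this is precisely the stated pair of inequalities with $C = C'$. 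The one place requiring care is the calibration in the union-bound step: one must track both the quadratic ($u^2/K^4$) and linear ($u/K^2$) branches of Bernstein's bound — the linear branch is exactly what forces the \emph{additive} (rather than multiplicative) dependence on $t$ — and verify that a single choice of $\delta$ absorbs the $9^p$ cardinality while still delivering the clean form $C K^2(\sqrt p + t)$. Alternatively, since the statement is verbatim Theorem 4.6.1 of \cite{Vershynin12}, one may simply invoke it as cited.
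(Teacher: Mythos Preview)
Your sketch is correct and is essentially the proof given in \cite{Vershynin12}; the paper itself does not prove this lemma but simply cites it as Theorem~4.6.1 there, so your final remark that one may just invoke the reference is exactly what the paper does.
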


\noindent The following parallel result provides tight bounds for the multivariate Gaussian distribution:

\begin{lemma}[Theorem 6.1 of \cite{wainwright}]\label{Lem:maxeig2} {Let $\cS \subseteq [n]$ be a  deterministic, non-empty set.}  Under condition (i) of Model \ref{mod2}, for any $t > 0$,
    
\begin{equation*} 
 \begin{aligned} 
   & \P \bigg(\sigma_1(Z_\cS) \geq   \sqrt{|\cS|} + \sqrt{p} + t \bigg) \leq e^{-t^2/2} ,\\
&       \P \bigg(\sigma_p(Z_\cS) \leq \sqrt{|\cS|} - \sqrt{p} - t \bigg) \leq e^{-t^2/2} .
 \end{aligned}    
 \end{equation*}
\end{lemma}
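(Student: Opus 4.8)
The plan is to recognize this as the Gaussian-matrix singular-value bound of \cite[Theorem~6.1]{wainwright} and to reproduce its two-part proof: Gaussian concentration of measure for the fluctuations, plus Gaussian comparison inequalities for the means. Throughout, fix a subset $\cS$ with $m \coloneqq |\cS| \ge p$. Under condition (i) of Model~\ref{mod2}, the rows of $Z = X\Sigma^{-1/2}$ are i.i.d.\ $\mathcal{N}(0, I_p)$, so $Z_\cS \in \R^{m \times p}$ has i.i.d.\ $\mathcal{N}(0,1)$ entries; I identify $Z_\cS$ with a standard Gaussian vector in $\R^{mp}$.

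First I would handle the fluctuations. By Weyl's perturbation inequality, $|\sigma_i(A) - \sigma_i(B)| \le \|A - B\|_{\op} \le \|A - B\|_F$, so both $G \mapsto \sigma_1(G)$ and $G \mapsto \sigma_p(G)$ are $1$-Lipschitz in the Frobenius norm. Gaussian concentration of measure then yields, for every $t > 0$,
\[
\P\big(\sigma_1(Z_\cS) \ge \E\,\sigma_1(Z_\cS) + t\big) \le e^{-t^2/2}, \qquad \P\big(\sigma_p(Z_\cS) \le \E\,\sigma_p(Z_\cS) - t\big) \le e^{-t^2/2}.
\]
It then suffices to prove $\E\,\sigma_1(Z_\cS) \le \sqrt m + \sqrt p$ and $\E\,\sigma_p(Z_\cS) \ge \sqrt m - \sqrt p$.

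For the expectations I would use Gaussian comparison against the auxiliary process $Y_{u,v} \coloneqq \langle g, u\rangle + \langle h, v\rangle$, where $g \sim \mathcal{N}(0, I_m)$ and $h \sim \mathcal{N}(0, I_p)$ are independent. Writing $\sigma_1(Z_\cS) = \max_{u \in \mathbb{S}^{m-1},\, v \in \mathbb{S}^{p-1}} u^\top Z_\cS v$, a direct computation of the two processes' increments gives $\E(Y_{u,v} - Y_{u',v'})^2 - \E(u^\top Z_\cS v - u'^\top Z_\cS v')^2 = 2\big(1 - \langle u, u'\rangle\big)\big(1 - \langle v, v'\rangle\big) \ge 0$, so Sudakov–Fernique gives $\E\,\sigma_1(Z_\cS) \le \E\max_u\langle g, u\rangle + \E\max_v\langle h, v\rangle = \E\|g\| + \E\|h\| \le \sqrt m + \sqrt p$. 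For the smallest singular value, I would write $\sigma_p(Z_\cS) = \min_{v \in \mathbb{S}^{p-1}} \max_{u \in \mathbb{S}^{m-1}} u^\top Z_\cS v$ and invoke Gordon's Gaussian min–max comparison inequality against the same $Y_{u,v}$, obtaining $\E\,\sigma_p(Z_\cS) \ge \E\big[\min_v \max_u(\langle g, u\rangle + \langle h, v\rangle)\big] = \E\|g\| - \E\|h\| \ge \sqrt m - \sqrt p$. Substituting both expectation bounds into the concentration inequalities completes the proof.

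I expect the only delicate point to be the Gordon step for $\sigma_p$: one must check that the covariance-domination hypotheses of the Gaussian min–max inequality are met by the pair $\big(u^\top Z_\cS v,\ \langle g,u\rangle + \langle h,v\rangle\big)$ (including the standard device for handling the variance mismatch). This is precisely what buys the clean radii $\sqrt{|\cS|} \pm \sqrt p$ — with no extra dimension-dependent constants and no $\varepsilon$-net union bound — and is why this Gaussian lemma is sharper than its sub-Gaussian analogue, Lemma~\ref{lem:maxeig}.
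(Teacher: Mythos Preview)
The paper does not prove this lemma; it is quoted verbatim from \cite[Theorem~6.1]{wainwright}, so there is no in-paper argument to compare against. Your proposal is exactly the standard proof of that theorem: Lipschitz concentration for the fluctuations, Sudakov--Fernique for $\E\sigma_1$, and Gordon's min--max inequality for $\E\sigma_p$.

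One small point worth tightening: the inequality $\E\|g\| - \E\|h\| \ge \sqrt{m} - \sqrt{p}$ is not immediate from $\E\|g\| \le \sqrt{m}$ and $\E\|h\| \le \sqrt{p}$ (both differences go the wrong way). It holds because the deficit $m \mapsto \sqrt{m} - \E\|g_m\|$ is monotone decreasing in $m$, so for $m \ge p$ one has $\sqrt{m} - \E\|g\| \le \sqrt{p} - \E\|h\|$; this is precisely the fact the paper itself invokes later (``see page 188 of \cite{wainwright}'') in the proof of Lemma~\ref{lem:lb_sigma_p2}. With that clarification, your argument is complete.
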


We will frequently use Bernstein's inequality for sub-exponential random variables:

\begin{lemma}[Corollary 2.8.3 of \cite{Vershynin12}] \label{lem:bern}
    Let  $ \xi_1,\ldots,\xi_n$ denote independent, mean zero, sub-exponential random variables. Then, for any $t > 0$, 
    \[
    \P\bigg( \frac{1}{n} \sum_{i=1}^n \xi_i \geq K \cdot t \bigg) \leq e^{-c n \min(t,t^2)} , 
    \]
    where 
    $K \coloneqq \max_{1 \leq i \leq n} \|\xi_i\|_{\psi_1}$. 
\end{lemma}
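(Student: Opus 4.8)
The plan is to prove this by the classical Cram\'er--Chernoff exponential-moment method, which reduces the tail bound to controlling the moment generating function (MGF) of each summand near the origin. \textbf{Step 1: MGF of a single sub-exponential variable.} I would first show that if $\xi$ is mean zero with $\|\xi\|_{\psi_1} \le K$, then there are absolute constants $C_0, c_0 > 0$ with $\E[\exp(\lambda \xi)] \le \exp(C_0 K^2 \lambda^2)$ for all $|\lambda| \le c_0/K$. This is extracted from the Orlicz-norm definition \eqref{eq:subgdef}: the bound $\E[\exp(|\xi|/K)] \le e$ yields moment estimates $\E[|\xi|^m] \le (Cm)^m K^m$ (equivalently $\E|\xi|^m \lesssim m!\,K^m$), and then writing $\exp(\lambda\xi) = 1 + \lambda\xi + \sum_{m\ge 2}\lambda^m\xi^m/m!$, using $\E[\xi]=0$ to kill the linear term, and summing the resulting geometric-type series for $|\lambda K|$ small enough gives the quadratic-in-$\lambda$ control.

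\textbf{Step 2: tensorization and Chernoff bound.} Set $K \coloneqq \max_i\|\xi_i\|_{\psi_1}$ and $S_n \coloneqq \sum_{i=1}^n \xi_i$. By independence, for $|\lambda|\le c_0/K$, $\E[\exp(\lambda S_n)] = \prod_{i=1}^n \E[\exp(\lambda\xi_i)] \le \exp(C_0 K^2 \lambda^2 n)$. Markov's inequality then gives, for every $u>0$ and every $\lambda\in(0,c_0/K]$, the estimate $\P(S_n\ge u)\le \exp(-\lambda u + C_0 K^2\lambda^2 n)$. \textbf{Step 3: optimize over $\lambda$.} The unconstrained minimizer is $\lambda^\star = u/(2C_0K^2 n)$. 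If $\lambda^\star \le c_0/K$ I take $\lambda=\lambda^\star$ and obtain $\exp(-u^2/(4C_0K^2 n))$; otherwise $u > 2C_0c_0 Kn$, and taking $\lambda = c_0/K$ I use this lower bound on $u$ to absorb the quadratic term into half of the linear term, obtaining $\exp(-c_0 u/(2K))$. Combining the two regimes, $\P(S_n\ge u)\le \exp\!\big(-c_1\min(u^2/(K^2 n),\,u/K)\big)$ for an absolute constant $c_1>0$. \textbf{Step 4: rescale.} Substituting $u = nKt$ gives $u^2/(K^2n) = nt^2$ and $u/K = nt$, hence $\P(n^{-1}S_n \ge Kt) \le \exp(-c_1 n\min(t^2,t))$, which is the claim with $c=c_1$.

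The only genuinely delicate point is Step 1: converting the Orlicz-norm definition \eqref{eq:subgdef} into a two-sided MGF bound valid in a neighborhood of $0$. The role of the centering hypothesis $\E[\xi]=0$ is precisely to remove the $O(\lambda)$ term so that the MGF is $1 + O(\lambda^2)$ near the origin; this is what produces the sub-Gaussian (rather than merely sub-exponential) decay $e^{-cnt^2}$ in the small-$t$ regime, while the boundary case $\lambda = c_0/K$ accounts for the heavier $e^{-cnt}$ decay for large $t$. Everything after Step 1 is bookkeeping of absolute constants and a routine case split in the Chernoff optimization. Since the statement is exactly Corollary 2.8.3 of \cite{Vershynin12}, one may alternatively invoke it directly; I include the sketch only to make the argument self-contained.
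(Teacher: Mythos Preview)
Your sketch is correct and follows the standard Cram\'er--Chernoff argument, which is exactly how Vershynin proves Corollary 2.8.3. The paper itself does not prove this lemma at all; it merely cites it from \cite{Vershynin12}, so your self-contained sketch goes beyond what the paper provides.
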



Recall that the product of two sub-Gaussian random variables is sub-exponential. Hence, Lemma \ref{lem:bern} can be used to bound quantities such as $\|\by\|^2$ or $\|Z \bv\|^2 = \sum_{i=1}^n (\bv^\top \spa \bz_i)^2$, where $\bv \in \mathbb{S}^{p-1}$ is fixed. In the latter case, 
by Lemma 2.7.7 of \cite{Vershynin12}, 
\begin{align} \label{fact12345}
\max_{1 \leq i \leq n} \big\|(\bv^\top \spa \bz_i)^2 - 1\big\|_{\psi_1} \leq 1 + \max_{1 \leq i \leq n} \big\|(\bv^\top \spa \bz_i)^2 \big\|_{\psi_1} \leq 1 +  \big\| \Sigma^{-1/2} \bx\|_{\psi_2}^2 . 
\end{align}
In the proof of Theorem \ref{lem1:conc_ineq_for_thrm1}, we will use that $\E y^2\leq \|y\|_{\psi_2}^2$. Indeed, by (\ref{eq:subgdef}) and Jensen's inequality, 
\begin{align} \label{y2}
    \E y^2 \leq \|y\|_{\psi_2}^2 \log \Big(\E \exp \big(y^2/\|y\|_{\psi_2}^2 \big) \Big) \leq e \|y\|_{\psi_2}^2 .  
\end{align}

\begin{lemma}[Theorem 2.26 of \cite{wainwright}]\label{lem:lipschitz} Let $\bz \sim \mathcal{N}(0,I_p)$ and $f: \mathbb{R}^p \rightarrow \mathbb{R}$ be $L$-Lipschitz with respect to the Euclidian norm. Then, for any $t > 0$,
\begin{align*}
    &    \P \big( f(\bz) - \E f(\bz) \geq t\big) \leq e^{-t^2/(2L^2)} ,   
  &  \E \big[ e^{t(f(\bz) - \E f(\bz))}\big] \leq e^{ t^2 L^2 / 2} . 
\end{align*}
\end{lemma}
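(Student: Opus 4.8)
The plan is to first prove the sub-Gaussian moment generating function (MGF) bound $\E[e^{\lambda(f(\bz)-\E f(\bz))}]\le e^{\lambda^2 L^2/2}$ for all $\lambda>0$ --- which is exactly the second displayed inequality with $\lambda=t$ --- and then to deduce the tail bound by a standard Chernoff argument: for $t>0$ and any $\lambda>0$, Markov's inequality gives $\P(f(\bz)-\E f(\bz)\ge t)\le e^{-\lambda t}\,\E[e^{\lambda(f(\bz)-\E f(\bz))}]\le e^{-\lambda t+\lambda^2 L^2/2}$, and optimizing over $\lambda$ (taking $\lambda=t/L^2$) yields $e^{-t^2/(2L^2)}$. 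Note at the outset that $f$ being $L$-Lipschitz implies $|f(\bz)|\le |f(\bzero)|+L\|\bz\|$, so $\E|f(\bz)|<\infty$, the centering $\E f(\bz)$ is well defined, and $\E[e^{\lambda f(\bz)}]<\infty$ for every $\lambda$.

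For the MGF bound I would first treat the case $f\in C^1$ with $\|\grad f\|\le L$ pointwise, and remove smoothness afterwards. For such $f$, after subtracting a constant so that $\E f(\bz)=0$, set $H(\lambda)\coloneqq\E[e^{\lambda f(\bz)}]$ and apply the Gaussian logarithmic Sobolev inequality $\mathrm{Ent}_\gamma(g^2)\le 2\,\E_\gamma\|\grad g\|^2$ to $g=e^{\lambda f/2}$. Since $\|\grad g\|^2=\tfrac{\lambda^2}{4}e^{\lambda f}\|\grad f\|^2\le\tfrac{\lambda^2 L^2}{4}e^{\lambda f}$ and $\mathrm{Ent}_\gamma(e^{\lambda f})=\lambda H'(\lambda)-H(\lambda)\log H(\lambda)$, this becomes the differential inequality $\lambda H'(\lambda)-H(\lambda)\log H(\lambda)\le\tfrac{\lambda^2 L^2}{2}H(\lambda)$. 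Dividing by $\lambda^2 H(\lambda)$ rewrites the left-hand side as $\tfrac{d}{d\lambda}\big(\lambda^{-1}\log H(\lambda)\big)$, and since $\lambda^{-1}\log H(\lambda)\to H'(0)/H(0)=\E f(\bz)=0$ as $\lambda\downarrow 0$, integrating gives $\log H(\lambda)\le\lambda^2 L^2/2$, which is the desired bound (the case $\lambda<0$, not needed here, follows by applying this to $-f$). This is the Herbst argument.

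To pass to a general $L$-Lipschitz $f$, I would approximate by $f_s(\bx)\coloneqq\E\big[f\big(e^{-s}\bx+\sqrt{1-e^{-2s}}\,\bg\big)\big]$ with $\bg\sim\cN(\bzero,\bI)$ independent, i.e.\ the Ornstein--Uhlenbeck semigroup applied to $f$. Each $f_s$ is $C^\infty$, satisfies $\|\grad f_s\|\le e^{-s}L\le L$, converges to $f$ pointwise as $s\downarrow 0$, and --- by stationarity of the Ornstein--Uhlenbeck process --- has the same mean $\E f_s(\bz)=\E f(\bz)$. Applying the smooth case to $f_s$ and letting $s\downarrow 0$ (Fatou's lemma for the MGF) transfers both inequalities. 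A convolution mollification $f*\varphi_s$ would serve equally well.

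The only substantive input is the Gaussian logarithmic Sobolev inequality; if a self-contained treatment is wanted, I would derive it by tensorization from the one-dimensional case, which in turn follows from a Hermite-polynomial expansion or, classically, from the two-point (Bonami--Beckner) inequality on $\{\pm1\}^n$ via the central limit theorem. (Alternatively one could route through the Gaussian isoperimetric inequality of Borell and Cirel'son--Ibragimov--Sudakov, which directly gives concentration about a median; a little extra care is then needed to transfer to the mean while keeping the sharp constant.) The only other point requiring attention is that the regularization step must preserve the Lipschitz constant exactly, so that the sharp exponent $\tfrac12$ survives in the limit --- the Ornstein--Uhlenbeck smoothing does this because it contracts gradients. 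Everything else (differentiation under the integral sign, finiteness of $H(\lambda)$, the Chernoff optimization) is routine.
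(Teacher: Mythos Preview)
Your proposal is correct and complete: the Herbst argument via the Gaussian log-Sobolev inequality is the standard route to this result, and your handling of the smoothing step (Ornstein--Uhlenbeck semigroup, which contracts gradients and preserves the mean) is exactly what is needed to keep the sharp constant.

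However, the paper does not prove this lemma at all --- it is stated as an auxiliary result and attributed directly to Theorem~2.26 of Wainwright's textbook, with no argument given. So there is nothing to compare your approach against; you have supplied a full proof where the paper simply cites one.
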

\noindent As an immediate consequence of Lemma \ref{lem:lipschitz} and Jensen's inequality, for $\bz \sim \mathcal{N}(0,I_p)$ and $t > 0$,
\begin{align} \label{Gaussian_norm}
    \P \big( \|\bz\| \geq \sqrt{p} + t \big) \leq e^{-t^2/2} . 
\end{align}

Lemmas \ref{lem:min_lipschitz} and \ref{lem:sv_lipschitz} are closely related to Example 5.3.4 of \cite{boucheron} and Example 2.32 of \cite{wainwright}:
\begin{lemma} \label{lem:min_lipschitz}
    Let $f_1, \ldots, f_n $ denote real-valued $L$-Lipschitz functions. Then, both $\min_{i \in [n]} f_i$ and $\max_{i \in [n]} f_i$ are $L$-Lipschitz. 
\end{lemma}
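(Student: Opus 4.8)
The plan is to prove both claims directly from the definition of the Lipschitz property. Let $f_1,\dots,f_n$ be real-valued functions on a subset of $\mathbb{R}^d$ (say $\mathbb{R}^p$), each $L$-Lipschitz with respect to the Euclidean norm, and set $g(\bx) \coloneqq \min_{i\in[n]} f_i(\bx)$ and $G(\bx) \coloneqq \max_{i\in[n]} f_i(\bx)$. I would fix two points $\bx,\by$ and aim to show $|g(\bx)-g(\by)| \leq L\|\bx-\by\|$, and similarly for $G$.

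First I would handle the maximum. Let $i^\star \in \argmax_{i} f_i(\by)$, so $G(\by) = f_{i^\star}(\by)$. Then
\[
G(\bx) \geq f_{i^\star}(\bx) \geq f_{i^\star}(\by) - L\|\bx-\by\| = G(\by) - L\|\bx-\by\|,
\]
where the first inequality is by definition of the max, and the second is the Lipschitz bound on $f_{i^\star}$. Thus $G(\by) - G(\bx) \leq L\|\bx-\by\|$. Swapping the roles of $\bx$ and $\by$ gives the reverse inequality, hence $|G(\bx)-G(\by)| \leq L\|\bx-\by\|$. The argument for the minimum is symmetric: pick $j^\star \in \argmin_i f_i(\bx)$, so $g(\bx) = f_{j^\star}(\bx)$, and then
\[
g(\by) \leq f_{j^\star}(\by) \leq f_{j^\star}(\bx) + L\|\bx-\by\| = g(\bx) + L\|\bx-\by\|,
\]
so $g(\by) - g(\bx) \leq L\|\bx-\by\|$; swapping $\bx,\by$ completes the proof. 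Alternatively, one can deduce the min statement from the max statement via $\min_i f_i = -\max_i(-f_i)$, noting that each $-f_i$ is $L$-Lipschitz and that negation preserves the Lipschitz constant; I would probably present the direct argument since it is just as short.

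There is essentially no obstacle here — the only point requiring a word of care is that the argmax/argmin is attained, which holds because the index set $[n]$ is finite; no compactness or continuity assumption on the domain is needed. I would state the proof in two or three lines covering the max case and then remark that the min case follows identically (or by the negation trick).
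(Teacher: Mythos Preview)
Your proof is correct and is the standard direct argument. The paper does not actually supply a proof of this lemma; it merely states it as closely related to Example~5.3.4 of Boucheron et al.\ and Example~2.32 of Wainwright, so your write-up is exactly what is needed.
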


\begin{lemma}   \label{lem:sv_lipschitz} For $\cS \subseteq [n]$ and $i \in [p]$, the function $f : \mathbb{R}^{|\cS| \times p} \rightarrow \mathbb{R}$ defined by $f(A) \coloneqq \sigma_i(A_\cS)$ is $1$-Lipschitz with respect to the Frobenius norm. 

\end{lemma}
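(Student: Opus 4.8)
The plan is to write $f$ as a composition of two maps, each $1$-Lipschitz in the Frobenius norm, and invoke the fact that a composition of $1$-Lipschitz maps is $1$-Lipschitz. Concretely, let $R_\cS : \mathbb{R}^{n \times p} \to \mathbb{R}^{|\cS| \times p}$ be the row-selection map $A \mapsto A_\cS$ (keeping only the rows indexed by $\cS$), and let $g_i : \mathbb{R}^{|\cS| \times p} \to \mathbb{R}$ be $g_i(M) \coloneqq \sigma_i(M)$, so that $f = g_i \circ R_\cS$.

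For $R_\cS$: since $A_\cS$ is obtained from $A$ by deleting rows, $\|A_\cS - B_\cS\|_F^2 = \sum_{j \in \cS} \|(A-B)_{j,:}\|^2 \le \sum_{j=1}^n \|(A-B)_{j,:}\|^2 = \|A-B\|_F^2$, so $R_\cS$ is $1$-Lipschitz. For $g_i$: I would use Weyl's perturbation inequality for singular values, $|\sigma_i(M) - \sigma_i(N)| \le \|M - N\|_{\op}$, combined with $\|M - N\|_{\op} \le \|M - N\|_F$, to conclude that $g_i$ is $1$-Lipschitz with respect to $\|\cdot\|_F$. Weyl's inequality itself follows from the variational characterization $\sigma_i(M) = \max\{\min_{x \in V,\, \|x\| = 1} \|Mx\| : V \subseteq \mathbb{R}^p,\ \dim V = i\}$: for any such $V$ and unit vector $x \in V$, $\|Mx\| \le \|Nx\| + \|(M-N)x\| \le \|Nx\| + \|M - N\|_{\op}$, so taking the minimum over $x \in V$ and then the maximum over $V$ gives $\sigma_i(M) \le \sigma_i(N) + \|M-N\|_{\op}$; exchanging the roles of $M$ and $N$ yields the two-sided bound. (This is also recorded as Example 2.32 of \cite{wainwright}.)

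Putting the pieces together, $|f(A) - f(B)| = |\sigma_i(A_\cS) - \sigma_i(B_\cS)| \le \|A_\cS - B_\cS\|_F \le \|A - B\|_F$, which is the claim. There is no real obstacle in this argument; the only points that require mild care are selecting the subspace dimension correctly in the min--max formula for $\sigma_i$ (so that the perturbation $M-N$ enters uniformly over the relevant subspace) and the elementary observation that discarding rows cannot increase the Frobenius norm.
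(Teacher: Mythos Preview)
Your proof is correct and is exactly the standard argument the paper has in mind: the paper does not give its own proof but merely cites Example~5.3.4 of \cite{boucheron} and Example~2.32 of \cite{wainwright}, and your decomposition into row selection plus Weyl's inequality is precisely what those references record. (Note, incidentally, that the domain in the lemma statement should be $\mathbb{R}^{n\times p}$ rather than $\mathbb{R}^{|\cS|\times p}$, as you implicitly assume.)
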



\subsection{Proof of Theorems \ref{lem1:conc_ineq_for_thrm1} and \ref{thrm:consistency}}


\begin{proof}[Proof of Theorem \ref{lem1:conc_ineq_for_thrm1}] Recall that our goal is to bound $\|\widehat \bbeta - \widehat \bbeta_\cS\|$ uniformly over all subsets $\cS \subseteq [n]$ of size at least $n-k$. To this end, we first derive a bound for a fixed subset $\cS$. By the definitions of $\widehat \bbeta$ and $\widehat \bbeta_\cS$, given in (\ref{eq:OLS}) and (\ref{eq:betas_def}), 
\begin{align}
\widehat \bbeta  - \widehat \bbeta_\cS
&= \big(X^\top \spa X\big)^{-1}X^\top \by -\big(X_\cS^\top X_\cS\big)^{-1}X_\cS^\top \by_\cS        \nonumber\\
&= \big(X^\top \spa X\big)^{-1} \big(X^\top \by  - X^\top \spa X\big(X_\cS^\top X_\cS\big)^{-1}X_\cS^\top \by_\cS\big)  . \label{kqwe812}
\end{align} 
Inserting the identities $X^\top \spa X = X_\cS^\top X_\cS + X_{\cS^c}^\top X_{\cS^c}$, $X^\top \by = X_\cS^\top \by_\cS + X_{\cS^c}^\top \by_{\cS^c}$, and $X = Z \Sigma^{1/2}$ into (\ref{kqwe812}) yields the decomposition
\begin{align}  \label{eq:mju7nhy6}
\widehat \bbeta  - \widehat \bbeta_\cS
&=  \big(X^\top \spa X\big)^{-1} \Big(X_{\cS^c}^\top \by_{\cS^c}  - X_{\cS^c}^\top X_{\cS^c} \big(X_\cS^\top X_\cS\big)^{-1}X_\cS^\top \by_\cS\Big)   \nonumber\\
&= \Sigma^{-1/2}\big(Z^\top \spa Z\big)^{-1} Z_{\cS^c}^\top\Big( \by_{\cS^c}  -  Z_{\cS^c} \big(Z_\cS^\top Z_\cS\big)^{-1}Z_\cS^\top \by_\cS\Big) .
\end{align}
By the triangle inequality and the identity $\|Z^\top \spa Z\| = \sigma_p^{2}(Z)$, 
\begin{align} \label{eq8123-1}
\|\widehat \bbeta - \widehat \bbeta_\cS\|
& \leq   \|\Sigma^{-1/2}\| \frac{ \sigma_1(Z_{\cS^c})}{\sigma_p^2(Z)} \Big( \|\by_{\cS^c}\|  +  \big\|Z_{\cS^c} \big(Z_\cS^\top Z_\cS\big)^{-1}Z_\cS^\top \by_\cS \big\| \Big)       \nonumber\\
& \leq   \|\Sigma^{-1/2}\| \frac{ \sigma_1(Z_{\cS^c})}{\sigma_p^2(Z)} \bigg( \|\by_{\cS^c}\|  +  \frac{\sigma_1(Z_\cS)}{\sigma_p^2(Z_\cS)}\|\bv_\cS \| \|\by_\cS\| \bigg) ,
\end{align}
where $\bv_\cS \in \mathbb{R}^{|\cS^c|}$ is given by
\begin{align*}
\bv_\cS \coloneqq   Z_{\cS^c} \frac{ \big(Z_\cS^\top Z_\cS\big)^{-1}Z_\cS^\top \by_\cS}{\big\|\big(Z_\cS^\top Z_\cS\big)^{-1}Z_\cS^\top \by_\cS\big\|} . 
\end{align*}
 We now bound each term on the right-hand side of (\ref{eq8123-1}). 
 By Lemma \ref{lem:maxeig}, for $t_1, t_2 > 0$,  
 \begin{equation} \label{eq8124-1}
 \begin{aligned}
   & \P \bigg(\sigma_1(Z_{\cS^c}) \geq   \sqrt{|\cS^c|} + C\omega^2(\sqrt{p} +  t_1) \bigg) \leq e^{-t_1^2} , \\ & 
       \P \bigg(\sigma_p(Z) \leq   \sqrt{n} - C\omega^2(\sqrt{p} + t_2) \bigg) \leq e^{-t_2^2} ,
 \end{aligned}    
 \end{equation}
 \vspace{-.26cm}
\begin{equation} \label{eq8124-7}
 \begin{aligned}  &  \hspace{-.37cm} \P \bigg(\sigma_1(Z_\cS) \geq   \sqrt{|\cS|} + C\omega^2(\sqrt{p} +  t_2) \bigg) \leq e^{-t_2^2} , \\ \hspace{.1cm}  & \hspace{-.37cm}
       \P \bigg(\sigma_p(Z_\cS) \leq   \sqrt{|\cS|} - C\omega^2(\sqrt{p} + t_2) \bigg) \leq e^{-t_2^2} .
 \end{aligned}    
 \end{equation}
By Lemma  \ref{lem:bern}, 
 \begin{equation} 
 \begin{aligned}
           \P \Big( \| \by_{\cS} \|^2 \geq \big( \E y^2 + \|y^2 - \E y^2\|_{\psi_1} \big)|\cS|  \Big) \leq  e^{- c |\cS| } . 
 \end{aligned} \label{bf123c1sdjkf}    
 \end{equation}
Since $|\cS| \geq n/2$ and  $\E y^2 + \|y^2 - \E y^2\|_{\psi_1} $ is bounded by a constant multiple of $\|y\|_{\psi_2}^2$ (see (\ref{y2}) and Lemma 2.7.6 in \cite{Vershynin12}), it follows that
\begin{align}\label{jop1}
 \P \Big( \| \by_{\cS} \|^2 \geq   \|y\|_{\psi_2}^2  n \Big) \leq  e^{- c n } .
\end{align}

\noindent Similarly, for $t_3 > 0$, 
\begin{equation*}
    \begin{aligned}
    & \P \bigg( \| \by_{\cS^c} \|^2 \geq  \|y\|_{\psi_2}^2 \Big(1+ \frac{k t_3}{|\cS^c|} \Big)  |\cS^c|  \bigg) \leq  e^{- c |\cS^c| - c k t_3 } .
\end{aligned}
\end{equation*}
Since $|\cS^c| \leq k$, this implies 
\begin{align} 
    \label{182j3g3}
        & \P \Big( \| \by_{\cS^c} \|^2 \geq  \|y\|_{\psi_2}^2 (1+t_3) k \Big) \leq  e^{- c k t_3} .
\end{align}

\noindent Next, we bound $\|\bv_\cS\|$. Since $Z_{\cS^c}$ is independent of  $Z_{\cS}$ and $\by_\cS$, the vector $\bv_\cS$, conditional on $Z_{\cS}$ and $\by_\cS$, is sub-Gaussian. In particular, the coordinates of $\bv_\cS$ are conditionally  i.i.d.\ with mean zero, variance one, and sub-Gaussian norm at most $\omega$ (see  (\ref{fact12345})). Thus, 
analogously to  (\ref{182j3g3}),
    \begin{align} & \P \Big( \| \bv_\cS \|^2 \geq ( 1 + \omega^2)(1+t_3) k \Big) \leq  e^{- c k t_3} . \label{182j3g}
\end{align}

Now, we bound the second term on the right-hand side of (\ref{eq8123-1}). Assume $C\omega^2(\sqrt{p} + t_2) < \sqrt{n-k}$, in which case (\ref{eq8124-7}) provides a non-trivial bound on $\sigma_p(Z_\cS)$.
Using (\ref{eq8124-7}), (\ref{jop1}),  (\ref{182j3g}), 
and $\sqrt{1+\omega^2} \leq 1 + \omega$, 
we have that with probability at least $1-2e^{- t_2^2}  - e^{-c k t_3} - e^{-c n}$,
\begin{align} \label{eqa8e}
 \frac{\sigma_1(Z_\cS)\|\bv_\cS\|\| \by_\cS\|}{  \sigma_p^2(Z_\cS) } & \leq  (1+\omega)\|y\|_{\psi_2} \cdot \frac{ \sqrt{n-k} + C \omega^2(\sqrt{p} + t_2)}{\big(\sqrt{n-k} - C \omega^2(\sqrt{p} + t_2) \big)^2} \cdot (1+\sqrt{t_3}) \sqrt{nk} . 
\end{align}
\noindent
Combining (\ref{eq8123-1}), (\ref{eq8124-1}), (\ref{182j3g3}), and the preceding bound yields that, with probability at least $1- e^{-t_1^2} - 3e^{-t_2^2} -2e^{-ck t_3 }  - e^{-cn} $,
\begin{align} \label{eq2312g}
      \|\widehat \bbeta - \widehat \bbeta_\cS\| 
   \leq  & \,  C  (1+\omega) \|\Sigma^{-1/2}\| \|y\|_{\psi_2}  \cdot 
   \frac{  \sqrt{k} + C \omega^2(\sqrt{p} + t_1)}{\big(\sqrt{n}-C \omega^2(\sqrt{p} + t_2) \big)^2} \nonumber \\ & \cdot \bigg( (1+\sqrt{t_3}) \sqrt{k} + \frac{ \sqrt{n-k} + C \omega^2(\sqrt{p} + t_2)}{\big(\sqrt{n-k} - C \omega^2(\sqrt{p} + t_2) \big)^2} \cdot (1+\sqrt{t_3}) \sqrt{nk}\bigg) . 
\end{align}

Applying a union bound over $\{\cS \subseteq [n]: |\cS| \geq n-k\}$ and using the inequality
\begin{align} \label{eqNcK}
\sum_{\ell=1}^k \binom{n}{\ell}  \leq k \Big(\frac{e n}{k}\Big)^k ,
\end{align}
which holds for $ k \leq n/2$, we 
conclude that  (\ref{eq2312g}) holds uniformly in $\cS$ 
with probability at least $1- e^{2 k \log(en/k)}\big( e^{-t_1^2} + 3e^{-t_2^2} + 2e^{-ck t_3}  + e^{-cn} \big)$: 
%
\begin{align} \label{eq2312gasdf}
    \max_{\substack{\cS \subseteq [n], \\ |\cS| \geq n-k}}
  \|\widehat \bbeta - \widehat \bbeta_\cS\| & 
   \leq \frac{C\eta}{n} \cdot 
   \frac{  1 + C \omega^2(\sqrt{p/n} + t_2/\sqrt{n})  }{ \big( 1 - C \omega^2(\sqrt{p/n} + t_2/\sqrt{n}) \big)^4}  \cdot \big(k + \sqrt{kp} + \sqrt{k} t_1  \big)(1+\sqrt{t_3}) . 
\end{align}

Note that by (\ref{thrm3.2cond}), there exists a  constant $\delta > 0$ such that 
\[
C \omega^2\bigg( \sqrt{\frac{p}{n}} + \sqrt{\frac{2k}{n} \log\Big( \frac{en}{k} \Big) } + \frac{\delta}{\omega^2}  \bigg) \leq \frac{1}{2} . 
\]
To conclude the proof, set 
\begin{align*} 
 & t_1 \coloneqq \sqrt{2 k \log\Big(\frac{en}{k}\Big) }(1+t) , &   t_2 \coloneqq \sqrt{2k \log\Big(\frac{en}{k}\Big)} + \frac{\delta\sqrt{n}}{\omega^2}   , & & t_3 \coloneqq \frac{2k}{c} \log\Big(\frac{en}{k}\Big) (1+t)^2 , 
\end{align*}
where $t > 0$ is a free parameter. With this choice of $t_2$, the second term on the right-hand side of (\ref{eq2312gasdf}) is bounded by 24. Then, with probability at least $1 - 3(en/k)^{-kt^2} - 4e^{-n\delta^2/\omega^4}$,
\begin{align} \label{line123asd}
    \max_{\cS \subseteq [n]: |\cS| \geq  n-k}   \|\widehat \bbeta - \widehat \bbeta_\cS\|  
   \leq C \eta  \cdot \frac{1}{n} \cdot \bigg( \sqrt{k \log \Big(\frac{en}{k}\Big) } + \sqrt{kp}  \bigg) \cdot \sqrt{k \log \Big(\frac{en}{k} \Big)} (1+t)^2 . 
\end{align} 
The theorem follows from (\ref{line123asd}). 
\end{proof}

\begin{proof}[Proof of Theorem \ref{thrm:consistency}] 
Let $\gamma \coloneqq p/n$ and  $\Omega$ be the inverse sample covariance matrix of $Z$:
\begin{align} \label{eq:defOmega}
\Omega \coloneqq \bigg(\frac{1}{n} Z^\top  \spa Z \bigg)^{-1} .  
\end{align}
Substituting $X = Z \Sigma^{1/2}$ into (\ref{eq:OLS}) yields
\[
\widehat \bbeta = \Sigma^{-1/2} \big(Z^\top  \spa Z\big)^{-1} \Sigma^{-1/2} X^\top \by  = \frac{1}{n}\Sigma^{-1/2} \Omega \Sigma^{-1/2} X^\top \by .
\]
Thus, by the triangle inequality, 
\begin{equation} 
\begin{aligned} \label{eq:betaS-beta2345}
  \|  \widehat \bbeta - \bbeta \|  &~ \leq   \bigg\|\frac{1}{n} \Sigma^{-1/2} \Omega \Sigma^{-1/2} X^\top \by  -   \frac{1}{n}\Sigma^{-1} X^\top \by \bigg\| + \bigg\| \frac{1}{n}\Sigma^{-1} X^\top \by  - \bbeta \bigg\|. 
\end{aligned}        
   \end{equation}
The first term on the right-hand side can be bounded as follows: 
\begin{align} 
    \bigg\| \frac{1}{n}\Sigma^{-1/2} \Omega \Sigma^{-1/2} X^\top \by  -\frac{1}{n}\Sigma^{-1} X^\top \by \bigg\| &~ =  \bigg\| \frac{1}{n}\Sigma^{-1/2} ( \Omega - I_p) \Sigma^{-1/2} X^\top \by   \bigg\| \nonumber
  \\   &~ \leq \sqrt{\kappa(\Sigma)}   \big\| \Omega - I_p \big\| \left\| \frac{1}{n}\Sigma^{-1} X^\top \by  \right\|     \nonumber
  \\ &~ \leq \sqrt{\kappa(\Sigma)}   \big\| \Omega - I_p \big\|
  \bigg( \bigg\| \frac{1}{n}\Sigma^{-1} X^\top \by  -\bbeta \bigg\|   + \| \bbeta\|\bigg) .
  \nonumber
\end{align}
This leads to the following bound on $\| \widehat \bbeta - \bbeta\|$:  
\begin{equation} 
\begin{aligned} \label{qbok}
  \|  \widehat \bbeta - \bbeta \|  &~ \leq   \big(1 +  \sqrt{\kappa(\Sigma)}   \big\| \Omega - I_p \big\| \big) \left\| \frac{1}{n}\Sigma^{-1} X^\top \by - \bbeta  \right\| + \sqrt{\kappa(\Sigma)}   \big\| \Omega - I_p \big\| \|\bbeta\| . 
\end{aligned}        
   \end{equation}

 By Lemma \ref{lem:maxeig} for $t_1, t_2 > 0$, such that  $C \omega^2(\sqrt{\gamma} + t_2) < 1$, 
\begin{align}\label{eq:eigbound_2sided}
         \P \bigg( \|\Omega - I_p\| \geq \frac{\big(1+C\omega^2(\sqrt{\gamma} + t_1)\big)^2 - 1}{\big(1-C\omega^2(\sqrt{\gamma} + t_2)\big)^2} \bigg) \leq e^{-nt_1^2} + e^{-nt_2^2} .
\end{align}
Note that  if $\|A - I_p\| \leq \delta < 1$, then $A$ is invertible and $\|A^{-1} - I_p\| \leq \delta/(1-\delta)$. Using this fact together with  (4.22) in \cite{Vershynin12}, one obtains
%
\begin{align}\label{eq:eigbound_2sided2}
         \P \bigg( \|\Omega - I_p\| \geq \frac{C\omega^2(\sqrt{\gamma} + t_2)}{1-C\omega^2(\sqrt{\gamma} + t_2)} \bigg) \leq 2e^{-nt_2^2} .
\end{align}

To bound the second term on the right-hand side of (\ref{eq:betaS-beta2345}), notice that $X^\top \by = \sum_{i =1}^n y_i \bx_i$ is a sum of i.i.d.\ sub-exponential vectors. 
For any fixed $\bv \in \mathbb{S}^{p-1}$,  applying the two-sided analog of Lemma \ref{lem:bern} yields, for $t_3 > 0$,
    \begin{equation} \label{eq:vZe}
    \begin{aligned}
        & \P \bigg( \bigg|\frac{1}{n} \bv^\top \Sigma^{-1} X^\top \by - \bv^\top \spa \bbeta \bigg| \geq \big\|\Sigma^{-1}(y \bx - \E[y \bx]) \big\|_{\psi_1} \cdot t_3  \bigg) \leq 2e^{-c n t_3^2} .
    \end{aligned} \end{equation}
   By Corollary 4.2.13 of \cite{Vershynin12},  there exists a $1/2$-net  $\mathcal{N}$ covering $\mathbb{S}^{p-1}$ with cardinality  at most
    \[|\mathcal{N}| \leq 5^p  . \]  Then, using a union bound over $\mathcal{N}$ and the inequality
\begin{align*}  
    \bigg\|\frac{1}{n}\Sigma^{-1} X^\top \by  - \bbeta\bigg\| \leq 
    2 \max_{\bv \in \mathcal{N}} \Big( \frac1n\bv^\top \Sigma^{-1} X^\top \by  - \bv^\top \spa \bbeta \Big) \end{align*}
(Exercise 4.4.2 in \cite{Vershynin12}), we have 
\begin{align} \label{eq:deltanet2}
    \P\bigg(\bigg\|\frac{1}{n}\Sigma^{-1} X^\top \by  - \bbeta\bigg\| \geq 2 \big\|\Sigma^{-1}(y \bx - \E[y \bx]) \big\|_{\psi_1} \cdot t_3 \bigg) \leq 2e^{\log(5) p - c n t_3^2} . 
\end{align}

Thus, by  (\ref{qbok})--(\ref{eq:eigbound_2sided2}) and (\ref{eq:deltanet2}),  with probability at least $1 - e^{-nt_1^2} - 3e^{-nt_2^2} - 2 e^{\log(5) p  - cn t_3^2}$,
    \begin{align} 
     \|  \widehat \bbeta - \bbeta\| \leq &~  2 \bigg(1+\frac{C\sqrt{\kappa(\Sigma)} \omega^2(\sqrt{\gamma} + t_2)}{1-C\omega^2(\sqrt{\gamma} + t_2)}  \bigg) \big\|\Sigma^{-1}(y \bx - \E[y \bx]) \big\|_{\psi_1} \cdot t_3  \nonumber   \\
     &~ + \sqrt{\kappa(\Sigma)} \bigg(\frac{\big(1+C\omega^2(\sqrt{\gamma} + t_1)\big)^2 - 1}{\big(1-C\omega^2(\sqrt{\gamma} + t_2)\big)^2} \bigg) \|\bbeta\| .  \label{19287hctgfiy}    
\end{align}  
Let $\delta > 0$ be a constant such that  $C(\omega^2 \sqrt{\gamma} + \delta)  \leq 1/2$. To conclude the proof,  set 
\begin{align*}
& t_1 \coloneqq \sqrt{\gamma} \hspace{.15em} t ,  & t_2 \coloneqq  \frac{\delta}{\omega^2}, && t_3 \coloneqq \sqrt{\frac{\gamma}{c} \log(5)} \hspace{.15em} (1+ t) ,
\end{align*}
where $t >  0$ is a free parameter. Then, with probability at least $1 - 3e^{-pt^2} - 3e^{-n\delta^2/\omega^4}$,
\begin{align}  
     \|  \widehat \bbeta - \bbeta\| \leq  &~ C \big(1+\sqrt{\kappa(\Sigma)} \big) \big\|\Sigma^{-1}(y \bx - \E[y \bx]) \big\|_{\psi_1} \sqrt{\gamma}(1+t) \nonumber    \\
     &~ + C \sqrt{\kappa(\Sigma)}  \omega^2(1+\omega^2) \|\bbeta\| \big( \sqrt{\gamma}(1+t) + \gamma(1+t)^2\big) .   \label{zq}
\end{align}  
The theorem follows from (\ref{zq}).
\end{proof}

\comment{\color{red}
Under Mod 2, \[\widehat \bbeta - \bbeta = (X^\top X)^{-1} X^\top \beps = (n^{-1} X^\top X)^{-1} \frac{1}{n} X^\top \beps . \]
Consider $\Sigma = I_p$:
\[
\P\Big(\frac{1}{n} \|\beps\|^2 - 1 \geq t_1\Big) \leq e^{-c n t_1^2},
\]
Let $\widetilde \beps = \beps / \|\beps\|$.
\[
\P\Big( \frac{1}{p}\|Z^\top \widetilde \beps\|^2 - 1 \geq t_2 \Big) \leq e^{- c p t_2^2} . 
\]
Thus,
\[
\P\Big( \|Z^\top \beps\| \geq \sqrt{(1+t_1)(1+t_2)np} \Big) \leq e^{-cnt_1^2} + e^{-cpt_2^2}.
\]
\[
\P\Big( \frac{1}{n} \|Z^\top \beps\| \geq \frac{\sqrt{(1+t_1)(1+t_2)p} }{\sqrt{n}} \Big) \leq e^{-cnt_1^2} + e^{-cpt_2^2}.
\]
How do we reconcile the $e^{-cpt_2^2}$ with (26)? To conclude from (26)--(27), we make the change of variables $t = \widetilde t + \sqrt{p\log(1+2/\delta)/cn}$. Then the probability in (26) and (27) is just $e^{-cnt^2}$. 
}

\subsection{Proof of Theorem \ref{thrm:linmodel_nonasymp}} \label{seca3}

Throughout this section,  we assume $(\bx_1, y_1), \ldots, (\bx_n, y_n) $ are i.i.d.\ samples from Model \ref{mod2}, $\eps \sim \mathcal{N}(0, \sigma_\eps^2)$, $k \leq n/2$, and $p \leq n-k$. The proof of Theorem \ref{thrm:linmodel_nonasymp} is based on the following lemmas: 

\begin{lemma} \label{lem:Zeps}  For  $t, \delta  > 0$, with probability at least $1 - (en/k)^{-kt^2}  - e^{-\delta^2/2} - e^{-n/2}$,
\begin{align} \label{eq:lema7_1}
\max_{{\cS \subseteq [n], |\cS| \geq n-k}}    \|Z_\cS^\top \beps_\cS\|    & \leq    \sigma_\eps \bigg(3 k  \log \Big(\frac{en}{k}\Big) (1+t) + 2\sqrt{n}(\sqrt{p} +  \delta) \bigg)    ,
\end{align}
where $\beps_\cS$ is defined analogously to $\by_\cS$.
Furthermore, with probability at least $1  - (en/k)^{-kt^2} - e^{-\delta^2/2}$,
\begin{align} \label{eq:lema7_2}
\max_{{\cS \subset [n], |\cS| \geq n-k}}    \|Z_{\cS^c}^\top \beps_{\cS^c}\|    & \leq    \sigma_\eps \bigg(3 k  \log \Big(\frac{en}{k}\Big)  + (\sqrt{p} +  \delta) \sqrt{ 3 k  \log \Big(\frac{en}{k}\Big)} \,\bigg) (1+t).
\end{align}
\end{lemma}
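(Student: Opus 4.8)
The plan is to exploit the exact Gaussianity of Model~\ref{mod2} via a conditioning device. Recall that $Z=X\Sigma^{-1/2}$ has i.i.d.\ $\mathcal{N}(0,I_p)$ rows $\bz_1,\dots,\bz_n$ and is independent of $\beps\sim\mathcal{N}(0,\sigma_\eps^2 I_n)$. The crucial observation is that for any fixed nonempty $T\subseteq[n]$, \emph{conditionally on} $\beps_T$, the vector $Z_T^\top\beps_T=\sum_{i\in T}\eps_i\bz_i$ is distributed as $\mathcal{N}(0,\|\beps_T\|^2 I_p)$. Hence $\|Z_T^\top\beps_T\|$ is $\|\beps_T\|$ times the Euclidean norm of a standard $p$-dimensional Gaussian, and \eqref{Gaussian_norm} gives, for every $s>0$, $\P\big(\|Z_T^\top\beps_T\|\ge\|\beps_T\|(\sqrt p+s)\big)\le e^{-s^2/2}$ --- a bound that does not involve the realized value of $\beps_T$, so it holds unconditionally. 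On the other hand $\|\beps_T\|^2/\sigma_\eps^2\sim\chi^2_{|T|}$, so a chi-square tail bound --- obtained, e.g., from Bernstein's inequality (Lemma~\ref{lem:bern}) applied to the i.i.d.\ mean-zero sub-exponential variables $\eps_i^2-\sigma_\eps^2$ --- controls $\|\beps_T\|$.

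\emph{Proof of \eqref{eq:lema7_2}.} First fix $T$ with $|T|=\ell\le k$. Combining the two tail bounds above, and using $\ell\le k$ together with $\log(en/k)\ge 1$, yields (with suitable absolute constants) $\P\big(\|Z_T^\top\beps_T\|\ge\sigma_\eps\sqrt{3k\log(en/k)}\,(1+t)\,(\sqrt p+s)\big)\le e^{-s^2/2}+e^{-c\,k\log(en/k)(1+t)^2}$, where the first term is the Gaussian-norm contribution and the second bounds the event $\|\beps_T\|>\sigma_\eps\sqrt{3k\log(en/k)}\,(1+t)$. Now union-bound over all $T\subseteq[n]$ with $|T|\le k$; their number is at most $k(en/k)^k$ by \eqref{eqNcK}. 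Choosing $s=\delta+\sqrt{3k\log(en/k)}$ makes the Gaussian contribution total at most $k(en/k)^k e^{-s^2/2}\le e^{-\delta^2/2}$, while the chi-square contribution totals at most $k(en/k)^k e^{-c\,k\log(en/k)(1+t)^2}\le (en/k)^{-kt^2}$. Substituting $\sqrt p+s=\sqrt p+\delta+\sqrt{3k\log(en/k)}$ and expanding the product $\sqrt{3k\log(en/k)}\,(1+t)\big(\sqrt p+\delta+\sqrt{3k\log(en/k)}\big)$ gives precisely the right-hand side of \eqref{eq:lema7_2}.

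\emph{Proof of \eqref{eq:lema7_1}.} Write $\cS=[n]\setminus\cS^c$ with $|\cS^c|\le k$, so that $\|Z_\cS^\top\beps_\cS\|\le\|Z^\top\beps\|+\|Z_{\cS^c}^\top\beps_{\cS^c}\|$. For the first term, apply the conditioning observation once with $T=[n]$ (no union over subsets, hence no $\log(en/k)$ factor): conditionally on $\beps$, $\|Z^\top\beps\|\le\|\beps\|(\sqrt p+\delta)$ with probability at least $1-e^{-\delta^2/2}$, and $\|\beps\|=\sigma_\eps\|\bg\|$ for $\bg\sim\mathcal{N}(0,I_n)$ satisfies $\|\beps\|\le 2\sigma_\eps\sqrt n$ with probability at least $1-e^{-n/2}$ by \eqref{Gaussian_norm} with $t=\sqrt n$; thus $\|Z^\top\beps\|\le 2\sigma_\eps\sqrt n(\sqrt p+\delta)$ with probability at least $1-e^{-\delta^2/2}-e^{-n/2}$. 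For the second term, bound $\max_{|\cS^c|\le k}\|Z_{\cS^c}^\top\beps_{\cS^c}\|$ by the already-proved \eqref{eq:lema7_2}; since $k\le n/2$ implies $\sqrt{3k\log(en/k)}\le 2\sqrt n$, the cross term $(\sqrt p+\delta)\sqrt{3k\log(en/k)}\,(1+t)$ appearing there is absorbed into $2\sqrt n(\sqrt p+\delta)$, leaving the additive term $3k\log(en/k)(1+t)$. Summing the two pieces and combining the probability budgets yields \eqref{eq:lema7_1}.

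\emph{Main obstacle.} The delicate point is achieving uniformity over the $\asymp(en/k)^k$ relevant subsets without degrading the rate. A direct union bound on a per-subset sub-exponential tail of $\|Z_T^\top\beps_T\|$ would force the factor $k\log(en/k)$ (the log of the subset count) into the wrong place --- typically producing a term of order $\sqrt{nk\log(en/k)}$ rather than the desired $k\log(en/k)$. The conditioning-on-$\beps_T$ step is exactly what sidesteps this: it separates a dimension-$p$ Gaussian-norm fluctuation (which is scale-free in $\beps_T$ and so needs no per-subset slack beyond the parameter $s$) from a scalar $\chi^2_{|T|}$ fluctuation; only the latter has to beat the cardinality $(en/k)^k$, and it does so at the correct rate because $k\log(en/k)$ dominates its own square root. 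What remains is the routine but somewhat fiddly calibration of $s$, of the chi-square deviation level, and of the split of slack between the parameters $t$ and $\delta$, so that the constants $3$ and $2$ in \eqref{eq:lema7_2}--\eqref{eq:lema7_1} come out exactly as stated.
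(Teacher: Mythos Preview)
Your conditioning device --- observing that $Z_T^\top\beps_T\mid\beps_T\sim\mathcal{N}(0,\|\beps_T\|^2 I_p)$ --- is exactly the right starting point, and the paper also conditions on $\beps$. But the two concrete steps you take after that both fall short of the stated constants.

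\textbf{The chi-square union bound fails.} You assert that $k(en/k)^k e^{-c\,k\log(en/k)(1+t)^2}\le (en/k)^{-kt^2}$, but with the value of $c$ that actually arises this is false. For a single $T$ with $|T|=k$, the Gaussian norm bound gives $\P\big(\|\beps_T\|/\sigma_\eps\ge M\big)\le e^{-(M-\sqrt{k})^2/2}$ with $M=\sqrt{3kL}(1+t)$ and $L=\log(en/k)$; at $t=0$ and $k=n/2$ (so $L=\log(2e)\approx 1.69$) one gets $(M-\sqrt{k})^2/2=k(\sqrt{3L}-1)^2/2\approx 0.78\,k$, whereas the union-bound cost is $kL\approx 1.69\,k$. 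The per-subset tail simply does not beat the subset count. The paper instead bounds $\max_{|T|\le k}\|\beps_T\|$ \emph{as a whole} (its Lemma~\ref{lem23j9874dx40}): the map $\beps\mapsto\max_T\|\beps_T\|$ is $1$-Lipschitz, so one gets Gaussian concentration around its expectation, and the expectation is handled by the exponential-moment/Jensen trick $\exp(\eta\,\E\max)\le\sum_T\E e^{\eta\|\beps_T\|}$. This is strictly sharper than summing tail probabilities and is what produces the constant $3$. Your Gaussian-norm union bound (the part with $s=\delta+\sqrt{3kL}$) is fine and, once combined with the correct bound on $\max_T\|\beps_T\|$, does recover \eqref{eq:lema7_2}.

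\textbf{The triangle-inequality route loses a term in \eqref{eq:lema7_1}.} Splitting $\|Z_\cS^\top\beps_\cS\|\le\|Z^\top\beps\|+\|Z_{\cS^c}^\top\beps_{\cS^c}\|$ and invoking \eqref{eq:lema7_2} leaves you with $2\sqrt{n}(\sqrt{p}+\delta)+3kL(1+t)+(\sqrt{p}+\delta)\sqrt{3kL}\,(1+t)$; the last term is \emph{not} absorbed by either of the first two (e.g.\ at $k=n/2$, $t=0$ it exceeds $(\sqrt{p}+\delta)\sqrt{n}$, so it cannot fit inside $2\sqrt{n}(\sqrt{p}+\delta)$ while preserving the constant $2$). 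The paper avoids this by treating $\max_\cS\|Z_\cS^\top\beps_\cS\|$ directly: conditionally on $\beps$ it is an $\|\beps\|$-Lipschitz function of $Z$, and its conditional expectation is bounded via the Sudakov--Fernique inequality by a comparison process $Y_{\bv,\cS}=\beps_\cS^\top\bxi_\cS+\|\beps\|\bv^\top\bzeta$. The crucial step is the centering identity $\E[\max_{|\cS|\ge n-k}\beps_\cS^\top\bxi_\cS\mid\beps]=\E[\max_{|T|\le k}\beps_T^\top\bxi_T\mid\beps]$, which makes the $\sqrt{3kL}$ prefactor multiply $\max_T\|\beps_T\|$ rather than $\|\beps\|$ --- precisely the decoupling your triangle inequality cannot deliver.
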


\begin{lemma}
\label{lem:lb_sigma_p2} 
For $t > 0$, the following bounds hold individually with probability at least $1 - e^{-t^2/2}$:
       \begin{align} \label{lemmaA8.1}
     \min_{\cS \subseteq [n], |\cS| \geq n-k} \sigma_p(Z_\cS) & \geq  \sqrt{n} - \sqrt{p} -  \sqrt{3k \log \Big(\frac{en}{k}\Big)}-t , \\
     \label{lemmaA8.2}
     \max_{\cS \subset [n], |\cS| \geq n-k} \sigma_1(Z_{\cS^c}) & \leq \sqrt{3 k \log \Big(\frac{en}{k}\Big)} + \sqrt{p} + t  . 
    \end{align} 
\end{lemma}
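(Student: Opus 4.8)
The plan is to prove both bounds in two stages: first pass to a bound on the expectation via Gaussian Lipschitz concentration, then control the expectation with a Gaussian comparison inequality rather than a union bound over the $\binom nk$ subsets. For the reduction, I would observe that $\min_{|\cS|\ge n-k}\sigma_p(Z_\cS)=\min_{|\cS|=n-k}\sigma_p(Z_\cS)$ (enlarging a row set cannot decrease $\sigma_p$) and $\max_{|\cS^c|\le k}\sigma_1(Z_{\cS^c})=\max_{|T|=k}\sigma_1(Z_T)$, and that by Lemmas \ref{lem:sv_lipschitz} and \ref{lem:min_lipschitz} the maps $Z\mapsto\min_{|\cS|=n-k}\sigma_p(Z_\cS)$ and $Z\mapsto\max_{|T|=k}\sigma_1(Z_T)$ are $1$-Lipschitz in the Frobenius norm. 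Lemma \ref{lem:lipschitz} (applied to the relevant function, resp.\ to its negative) then reduces \eqref{lemmaA8.2} and \eqref{lemmaA8.1} to the expectation bounds $\E[\max_{|T|=k}\sigma_1(Z_T)]\le\sqrt p+\sqrt{3k\log(en/k)}$ and $\E[\min_{|\cS|=n-k}\sigma_p(Z_\cS)]\ge\sqrt n-\sqrt p-\sqrt{3k\log(en/k)}$.

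For the expectation bounds, the identities I would use are
\[
\max_{|T|=k}\sigma_1(Z_T)=\max_{\bu\in\S^{p-1}}\ \max_{\bw\in\cC_k\cap\S^{n-1}}\bw^\top Z\bu,\qquad
\min_{|\cS|=n-k}\sigma_p(Z_\cS)=\min_{\bu\in\S^{p-1}}\mathrm{dist}(Z\bu,\cC_k),
\]
where $\cC_k:=\{\bt\in\R^n:\|\bt\|_0\le k\}$ and $\mathrm{dist}(\bv,\cC_k)=(\|\bv\|^2-\|\bv\|_{(k)}^2)^{1/2}$ is the $\ell_2$ norm of $\bv$ after deleting its $k$ largest-magnitude entries; both follow from $\sum_{i\in\cS}(\bz_i^\top\bu)^2=\|Z\bu\|^2-\max_{|T|=k}\sum_{i\in T}(\bz_i^\top\bu)^2$. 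The first is a supremum of the Gaussian process $(\bu,\bw)\mapsto\bw^\top Z\bu$ over a fixed product set, so the Sudakov--Fernique inequality yields $\E[\max_{|T|=k}\sigma_1(Z_T)]\le\E\|\bg\|_{(k)}+\E\|\bh\|$ with $\bg\sim\cN(0,I_n)$, $\bh\sim\cN(0,I_p)$ independent. For the second, I would write $\mathrm{dist}(Z\bu,\cC_k)=\min_{\bt\in\cC_k}\max_{\bw\in\S^{n-1}}(\bw^\top Z\bu-\bw^\top\bt)$ and apply Gordon's Gaussian min-max comparison inequality; since the minimizing $\bt$ has $\|\bt\|\le\|Z\bu\|$, the unbounded set $\cC_k$ is exhausted by bounded ones and the inequality is passed to the limit, giving $\E[\min_{|\cS|=n-k}\sigma_p(Z_\cS)]\ge\E[\mathrm{dist}(\bg,\cC_k)]-\E\|\bh\|$.

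It then remains to estimate the expected sum of the $k$ largest squared coordinates of $\bg\sim\cN(0,I_n)$. Thresholding at $\theta=2\log(en/k)$ gives $\|\bg\|_{(k)}^2\le k\theta+\sum_{i=1}^n g_i^2\,\ind(g_i^2>\theta)$, and integrating the Gaussian tail bound $\P(g_1^2>s)\le e^{-s/2}$ yields $\E\|\bg\|_{(k)}^2\le 3k\log(en/k)$ after a routine accounting of constants; hence $\E\|\bg\|_{(k)}\le\sqrt{3k\log(en/k)}$ by Jensen. Combined with $\E\|\bh\|\le\sqrt p$ and $\E[\mathrm{dist}(\bg,\cC_k)]\ge\E\|\bg\|-\E\|\bg\|_{(k)}\ge\sqrt n-\sqrt{3k\log(en/k)}$ (using $\E\|\bg\|=\sqrt n\,(1-o(1))$ and absorbing the lower-order term), this gives both expectation bounds and hence the lemma.

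I expect the crux to be the expectation lower bound \eqref{lemmaA8.1}. Combining the per-subset estimate of Lemma \ref{Lem:maxeig2} with a union bound over the $\binom nk$ subsets would produce only $\sqrt{n-k}$ in place of $\sqrt n$ (each $Z_\cS$ is treated as a generic $(n-k)\times p$ Gaussian, disregarding that all such submatrices share the same $n-k$ rows as $Z$), which is strictly weaker; the purpose of recasting $\min_\cS\sigma_p(Z_\cS)$ as $\min_{\bu}\mathrm{dist}(Z\bu,\cC_k)$ and appealing to Gordon's theorem is precisely that it keeps the full matrix $Z$ in play and thereby recovers the leading $\sqrt n$. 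A secondary delicate point is pinning down the constant $3$, which dictates the threshold $\theta\asymp\log(en/k)$ needed for the order-statistics estimate to hold with the stated constant throughout the range $k\le n/2$.
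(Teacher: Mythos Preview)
Your proposal is correct and is essentially the paper's argument: Lipschitz concentration reduces each bound to an expectation, and then Gordon's inequality (for \eqref{lemmaA8.1}) and Sudakov--Fernique (for \eqref{lemmaA8.2}) reduce matters to bounding $\E\|\bg\|_{(k)}=\E[\max_{|T|=k}\|\bg_T\|]$ and $\E[\min_{|\cS|=n-k}\|\bg_\cS\|]=\E[\mathrm{dist}(\bg,\cC_k)]$. The paper parameterizes the comparison directly by $(\cS,\bu,\bv)$ rather than via your $\cC_k$, bounds $\E\|\bg\|_{(k)}$ by an MGF/Chernoff argument instead of your second-moment thresholding, and---the one place your sketch is slightly loose---obtains the leading $\sqrt n$ by grouping terms and invoking the known inequality $\E\|\bg\|-\E\|\bh\|\ge\sqrt n-\sqrt p$ (Wainwright, p.~188) rather than trying to absorb the deficit $\sqrt n-\E\|\bg\|=O(n^{-1/2})$ into the order-statistics constant.
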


\begin{proof}[Proof of Theorem \ref{thrm:linmodel_nonasymp}]
   Analogous to (\ref{eq:mju7nhy6}), $\widehat \bbeta - \widehat \bbeta_\cS$ is decomposable as
    \begin{align}  \label{g}
\widehat \bbeta  - \widehat \bbeta_\cS
&= \Sigma^{-1/2}\big(Z^\top \spa Z\big)^{-1} Z_{\cS^c}^\top\Big( \beps_{\cS^c}  -  Z_{\cS^c} \big(Z_\cS^\top Z_\cS\big)^{-1}Z_\cS^\top \beps_\cS\Big) .
\end{align}
Thus,
\begin{equation}
     \begin{aligned}  
\max_{\cS \subseteq [n], |\cS| \geq n-k} \| \widehat \bbeta  - \widehat \bbeta_\cS \| & \leq  \frac{\|\Sigma^{-1/2}\|}{\sigma_p^2(Z)} \cdot  \max_{\cS \subseteq [n], |\cS| \geq n-k}  \| Z_{\cS^c}^\top \beps_{\cS^c}\|   \\
    & +  \frac{\|\Sigma^{-1/2}\|}{\sigma_p^2(Z)} \cdot  \max_{\cS \subseteq [n], |\cS| \geq n-k}  \big \| Z_{\cS^c}^\top Z_{\cS^c} \big(Z_\cS^\top Z_\cS\big)^{-1}Z_\cS^\top \beps_\cS \big\|   .
\label{qqwuefsd}\end{aligned}
\end{equation}
Since (\ref{g}) is linear in $\beps$, we may assume without loss of generality that $\sigma_\eps = 1$.

To bound $\| Z_{\cS^c}^\top \beps_{\cS^c}\|$ uniformly, we use (\ref{eq:lema7_2}): with probability at least $1 - 2(en/k)^{-kt^2}$,
 \begin{align} \max_{\cS \subseteq [n], |\cS| \geq n-k}   \| Z_{\cS^c}^\top \beps_{\cS^c}\| 
\leq \bigg( 6 k\log \Big(\frac{en}{k}\Big) +  \sqrt{3 kp \log \Big(\frac{en}{k}\Big)} \hspace{.1em} \bigg)(1+t)^2 .  \label{eq:a26} 
\end{align}
Next, we bound $\big \| Z_{\cS^c}^\top Z_{\cS^c} \big(Z_\cS^\top Z_\cS\big)^{-1}Z_\cS^\top \beps_\cS \big\|$ uniformly.  For fixed $\cS$, since $Z_{\cS^c}$ is independent of $Z_\cS$ and $\beps_\cS$, (\ref{Gaussian_norm}) implies
\begin{align} \label{v}
\P\bigg( \big\| Z_{\cS^c} \big(Z_\cS^\top Z_\cS\big)^{-1}Z_\cS^\top \beps_\cS \big\| \geq  \big\| \big(Z_\cS^\top Z_\cS\big)^{-1}Z_\cS^\top \beps_\cS \big\| \big(\sqrt{k} +t\big) \bigg) \leq e^{-t^2/2} . 
\end{align}
Using a union bound and (\ref{eqNcK}), it follows from (\ref{v}) that with probability at least $1 - (en/k)^{- kt^2}$,
\begin{align} \label{x3rlop}
     &~ \max_{\substack{\cS \subseteq [n], \\ |\cS| \geq n-k}} \big\| Z_{\cS^c} \big(Z_\cS^\top Z_\cS\big)^{-1}Z_\cS^\top \beps_\cS \big\| \leq   3 \sqrt{k \log \Big( \frac{en}{k}\Big)} (1+t)  \cdot  \max_{\substack{\cS \subseteq [n], \\ |\cS| \geq n-k}}   \big\| \big(Z_\cS^\top Z_\cS\big)^{-1}Z_\cS^\top \beps_\cS \big\| . \end{align}
By (\ref{eq:lema7_1}), with probability at least $1 - 2(en/k)^{- kt^2} - e^{-n/2}$,
\begin{align} \label{x4rlop}
  \max_{{\cS \subseteq [n], |\cS| \geq n-k}}    \|Z_\cS^\top \beps_\cS\|    & \leq 2\sqrt{np} +   
  \bigg( 3 k  \log \Big(\frac{en}{k}\Big) +  \sqrt{8 n k  \log \Big(\frac{en}{k}\Big)} \bigg)(1+t)      \nonumber \\
  & \leq 2\sqrt{np} + 6\sqrt{ n k  \log \Big(\frac{en}{k}\Big)} (1+t).
\end{align}
Here, the second inequality follows from the bound $\log(x) \leq x/e$ for $x \geq 0$, which implies
\[
 k  \log \Big(\frac{en}{k}\Big) \leq \sqrt{nk \log \Big(\frac{en}{k}\Big)} . 
\]

Combining (\ref{lemmaA8.1}), (\ref{x3rlop}), and (\ref{x4rlop}),  we obtain that, with probability at least $1 - 3(en/k)^{- kt^2} - e^{-n\delta^2/2} - e^{-n/2}$,
\vspace{-.1cm}
\begin{align}  \nonumber 
    \max_{\substack{\cS \subseteq [n], \\ |\cS| \geq n-k}} \big\| Z_{\cS^c} \big(Z_\cS^\top Z_\cS\big)^{-1}Z_\cS^\top \beps_\cS \big\| \leq  \frac{6}{(1-\rho)^2}\bigg( &  \sqrt{\frac{kp}{n}\log \Big( \frac{en}{k}\Big)}(1+t)   +  \frac{3k}{\sqrt{n}}  \log \Big( \frac{en}{k}\Big) (1+t)^2 \bigg)   ,
\end{align}
where $\rho$ is defined in (\ref{rho_def}). The result follows from this bound, Lemma \ref{Lem:maxeig2},  
  (\ref{lemmaA8.2}), and (\ref{eq:a26}).
\comment{By  (\ref{lemmaA8.2}), with probability at least $1 - (en/k)^{- kt^2}$,
\begin{align*} 
    \max_{\cS \subseteq [n], |\cS| \geq n-k} \sigma_1(Z_{\cS^c}) \leq\sqrt{3 k \log \Big( \frac{en}{k}\Big)} (1+t) + \sqrt{p} .
\end{align*}
Together, these bounds imply that with probability at least $1 - 4(en/k)^{- kt^2} - e^{-n\delta^2/2} - e^{-n/2}$,
\comment{\begin{align}  
     &~ \max_{\cS \subseteq [n], |\cS| \geq n-k}\big\| Z_{\cS^c}^\top Z_{\cS^c} \big(Z_\cS^\top Z_\cS\big)^{-1}Z_\cS^\top \beps_\cS \big\| \label{final2}  \\  \leq &~    \frac{1}{(1-\rho)^2}\bigg(  8 p \sqrt{\frac{k}{n}\log \Big( \frac{en}{k}\Big)}(1+t)  +   \frac{38k \sqrt{p}}{\sqrt{n}}  \log \Big( \frac{en}{k}\Big) (1+t)^2 +  
     \frac{42 k^{3/2}}{\sqrt{n}}\log^{3/2}\hspace{-.15em}\Big( \frac{en}{k}\Big)(1+t)^3 \bigg)   . \notag
\end{align}}
\begin{align}  
     &~ \max_{\cS \subseteq [n], |\cS| \geq n-k}\big\| Z_{\cS^c}^\top Z_{\cS^c} \big(Z_\cS^\top Z_\cS\big)^{-1}Z_\cS^\top \beps_\cS \big\| \label{final2}  \\  \leq &~    \frac{1}{(1-\rho)^2}\bigg(  6 p \sqrt{\frac{k}{n}\log \Big( \frac{en}{k}\Big)}  +   \frac{30k \sqrt{p}}{\sqrt{n}}  \log \Big( \frac{en}{k}\Big) +  
     \frac{32 k^{3/2}}{\sqrt{n}}\log^{3/2}\hspace{-.15em}\Big( \frac{en}{k}\Big)(1+t) \bigg)(1+t)^2   . \notag
\end{align}

\noindent The theorem  follows from Lemma \ref{Lem:maxeig2}, (\ref{final2}), and 
(\ref{eq:a26}).  }
\end{proof}


Lemma \ref{lem:Zeps} follows from Lemmas  \ref{lem:Zeps2} and \ref{lem23j9874dx40} and (\ref{Gaussian_norm}).
\begin{lemma} \label{lem:Zeps2}
  For $t \geq 0$, the following bounds hold individually with probability at least $1 - e^{-t^2/2}$:
\begin{align}
\max_{{\cS \subseteq [n], |\cS| \geq n-k}} \|Z_\cS^\top \beps_\cS\|  &\leq   \sqrt{3 k \log\Big(\frac{en}{k}\Big)} \cdot  \max_{\cS \subset [n], |\cS| = k} \|\beps_\cS\| + (\sqrt{p} + t) \|\beps\|   , \label{jadhyads1} \\
\max_{{\cS \subset [n], |\cS| \geq n-k}} \|Z_{\cS^c}^\top \beps_{\cS^c}\| & \leq \bigg(  \sqrt{3 k \log\Big(\frac{en}{k}\Big)} + \sqrt{p} + t \bigg)  \max_{\cS \subset [n], |\cS| = k} \|\beps_\cS\|  . \label{jadhyads2}
\end{align}
\end{lemma}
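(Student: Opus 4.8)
The plan is to prove both \eqref{jadhyads1} and \eqref{jadhyads2} by the same two-step argument, conditioning on $\beps$ throughout; since the probability bound $1-e^{-t^2/2}$ I obtain does not depend on $\beps$, it then holds unconditionally. For $\cS\subseteq[n]$ let $\beps^{(\cS)}\in\mathbb{R}^n$ denote $\beps$ with all coordinates outside $\cS$ set to $0$, so that $Z_\cS^\top\beps_\cS=Z^\top\beps^{(\cS)}$ and $\|Z_\cS^\top\beps_\cS\|=\sup_{\bu\in\mathbb{S}^{p-1}}(\beps^{(\cS)})^\top Z\bu$. Hence
\[
\max_{|\cS|\ge n-k}\|Z_\cS^\top\beps_\cS\|=\sup_{\bv\in V,\,\bu\in\mathbb{S}^{p-1}}\bv^\top Z\bu,\qquad V:=\big\{\beps^{(\cS)}:|\cS^c|\le k\big\}\subseteq\mathbb{R}^n,
\]
and likewise $\max_{|\cS^c|\le k}\|Z_{\cS^c}^\top\beps_{\cS^c}\|=\sup_{\bv\in W,\,\bu\in\mathbb{S}^{p-1}}\bv^\top Z\bu$ where $W:=\{\beps^{(T)}:T\subseteq[n],\,|T|\le k\}$ and $\beps^{(T)}$ keeps only the coordinates of $\beps$ in $T$. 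The radii are $\sup_{\bv\in V}\|\bv\|=\|\beps\|$ and $\sup_{\bv\in W}\|\bv\|=\max_{|\cS'|=k}\|\beps_{\cS'}\|=:R$.

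First I would handle the concentration step. For any bounded $S\subseteq\mathbb{R}^n$, the map $Z\mapsto\sup_{\bv\in S,\bu\in\mathbb{S}^{p-1}}\bv^\top Z\bu$ is $\big(\sup_{\bv\in S}\|\bv\|\big)$-Lipschitz in $Z$ with respect to the Frobenius norm, because $\bv^\top(Z-Z')\bu\le\|\bv\|\,\|Z-Z'\|_\op\le\|\bv\|\,\|Z-Z'\|_F$ (the same mechanism underlying Lemmas~\ref{lem:sv_lipschitz} and~\ref{lem:min_lipschitz}). Viewing the $np$ entries of $Z$ as a standard Gaussian vector, Lemma~\ref{lem:lipschitz} then gives that, with probability at least $1-e^{-t^2/2}$, the corresponding supremum exceeds its mean by at most $\|\beps\|\,t$ (for $V$) or $R\,t$ (for $W$).

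The crux is bounding the two means. The naive route --- writing $Z_\cS^\top\beps_\cS=Z^\top\beps-Z_{\cS^c}^\top\beps_{\cS^c}$ and applying the triangle inequality --- is too lossy: it produces the ambient-dimension factor $\sqrt p$ twice, once attached to $\|\beps\|$ (from $\|Z^\top\beps\|$) and once to $R$ (from $\|Z_{\cS^c}^\top\beps_{\cS^c}\|$, which is of order $\sqrt p\,R$), yielding a spurious factor of $2$ on $\sqrt p$. Instead I would invoke Chevet's inequality, a standard consequence of the Gaussian/Sudakov--Fernique comparison: for bounded $S_1\subseteq\mathbb{R}^n$, $S_2\subseteq\mathbb{R}^p$,
\[
\E\sup_{\bv\in S_1,\,\bu\in S_2}\bv^\top Z\bu\ \le\ w(S_1)\cdot\sup_{\bu\in S_2}\|\bu\|\;+\;\Big(\sup_{\bv\in S_1}\|\bv\|\Big)\cdot w(S_2),
\]
where $w(S)=\E\sup_{\bx\in S}\langle\bg,\bx\rangle$ is the Gaussian width. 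With $S_2=\mathbb{S}^{p-1}$ we have $\sup\|\bu\|=1$ and $w(\mathbb{S}^{p-1})=\E\|\bg\|\le\sqrt p$, so $\E\sup_{V}\bv^\top Z\bu\le w(V)+\sqrt p\,\|\beps\|$ and $\E\sup_{W}\bv^\top Z\bu\le w(W)+\sqrt p\,R$. For $w(V)$: shifting by the point $\beps^{([n])}=\beps$ (free in expectation since $\E\langle\bh,\beps\rangle=0$) shows $w(V)=\E\sup_{|T|\le k}\langle-\beps_T,\bh_T\rangle$ with $\bh\sim\mathcal N(0,I_n)$, a maximum of at most $1+\sum_{m=1}^{k}\binom nm\le 1+k(en/k)^k$ centered Gaussians, each of variance $\le R^2$; then $\E\max_{i\le N}X_i\le\sigma_{\max}\sqrt{2\log N}$ together with $\log\big(1+k(en/k)^k\big)\le\tfrac32 k\log(en/k)$ (valid since $k\le n/2$; cf.\ \eqref{eqNcK}) gives $w(V)\le R\sqrt{3k\log(en/k)}$, and the identical estimate bounds $w(W)$. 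Combining with the concentration slack from the previous step yields exactly \eqref{jadhyads1} and \eqref{jadhyads2}.

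I expect the main obstacle to be precisely this decoupling: one has to resist the naive triangle-inequality split and use a Gaussian comparison inequality so that $\sqrt p$ attaches only to the radius ($\|\beps\|$ or $R$), while the combinatorial factor $\sqrt{3k\log(en/k)}$ attaches only to the much smaller top-$k$ noise norm $R=\max_{|\cS'|=k}\|\beps_{\cS'}\|$. The remainder is bookkeeping --- handling all subset sizes $\le k$ at once via $\sum_{m\le k}\binom nm\le k(en/k)^k$, and keeping careful track that Chevet and Lemma~\ref{lem:lipschitz} are applied after conditioning on $\beps$.
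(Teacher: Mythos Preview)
Your proposal is correct and follows essentially the same route as the paper: condition on $\beps$, use Gaussian Lipschitz concentration (Lemma~\ref{lem:lipschitz}) for the $+t$ fluctuation, and bound the conditional expectation via a Sudakov--Fernique comparison together with the shift $\beps^{(\cS)}=\beps-\beps^{(\cS^c)}$ and a max-of-Gaussians estimate. The only cosmetic difference is packaging: you invoke Chevet's inequality and the bound $\E\max_{i\le N}X_i\le\sigma_{\max}\sqrt{2\log N}$, whereas the paper writes out the Sudakov--Fernique comparison process explicitly and uses the Chernoff/MGF optimization in \eqref{eq:polikuj}; both arrive at the same constant $\sqrt{3k\log(en/k)}$.
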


\begin{lemma} \label{lem23j9874dx40}  For $t \geq 0$, with probability at least $1 - (e n/k)^{-kt^2}$, 
\begin{align} \label{123cas}
    \max_{\cS \subset [n], |\cS| = k}    \|  \beps_\cS \| \leq  \sigma_\eps \sqrt{3k \log \Big(\frac{en}{k}\Big)} \cdot (1+t) .
    \end{align}
\end{lemma}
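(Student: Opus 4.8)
The plan is to avoid the obvious union bound over the $\binom{n}{k}$ subsets of size $k$ --- which is wasteful here, because the events $\{\|\beps_\cS\|\text{ large}\}$ are strongly correlated across such subsets --- and instead to combine a \emph{sharp} bound on $\E\big[\max_{|\cS|=k}\|\beps_\cS\|\big]$ with Gaussian concentration for Lipschitz functions. Write $g(\beps)\coloneqq \max_{\cS\subset[n],\,|\cS|=k}\|\beps_\cS\|$. For each fixed $\cS$ the map $\beps\mapsto\|\beps_\cS\|$ is $1$-Lipschitz on $\R^n$, hence so is $g$ (a direct computation; cf.\ Lemma \ref{lem:min_lipschitz} applied to the finite collection indexed by all $\cS$ with $|\cS|=k$). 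Since $\beps\sim\mathcal{N}(0,\sigma_\eps^2 I_n)$, Lemma \ref{lem:lipschitz} (after the obvious rescaling by $\sigma_\eps$) gives, for every $s>0$,
\[
\P\big(g(\beps)\ge \E[g(\beps)]+\sigma_\eps s\big)\le e^{-s^2/2}.
\]

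The crux is bounding $\E[g(\beps)]$. Observe that $g(\beps)^2=\sum_{j=1}^k\eps_{(j)}^2$, the sum of the $k$ largest of $\eps_1^2,\dots,\eps_n^2$, so for any $u>0$ the elementary truncation inequality
\[
\sum_{j=1}^k\eps_{(j)}^2\;\le\; ku+\sum_{i=1}^n(\eps_i^2-u)_+
\]
holds. Taking expectations and using the Gaussian tail bound $\P(\eps_i^2>v)\le e^{-v/(2\sigma_\eps^2)}$, which yields $\E[(\eps_i^2-u)_+]\le 2\sigma_\eps^2 e^{-u/(2\sigma_\eps^2)}$, I would pick $u=2\sigma_\eps^2\log(2n/k)$ so that $2ne^{-u/(2\sigma_\eps^2)}=k$. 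This gives $\E[g(\beps)^2]\le k\sigma_\eps^2\big(2\log(2n/k)+1\big)$, and since $2\log(2n/k)+1\le 3\log(en/k)$ whenever $k\le n$ (it reduces to $2-2\log 2+\log(n/k)\ge 0$), Jensen's inequality yields $\E[g(\beps)]\le\sigma_\eps\sqrt{3k\log(en/k)}$.

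To finish, I would substitute this into the concentration bound above and take $s=t\sqrt{3k\log(en/k)}$, obtaining
\[
\P\Big(g(\beps)\ge\sigma_\eps\sqrt{3k\log(en/k)}\,(1+t)\Big)\le e^{-\frac32 kt^2\log(en/k)}\le\Big(\tfrac{en}{k}\Big)^{-kt^2},
\]
where the last step uses $\tfrac32\ge 1$ and $en/k\ge 1$. This is exactly the claimed bound (indeed with a slightly stronger exponent $\tfrac32 kt^2$).

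The main obstacle is the expectation step: getting precisely the constant $3$ in front of $k\log(en/k)$. A naive union bound over $\binom{n}{k}\le(en/k)^k$ subsets combined with a $\chi^2$ tail (e.g.\ Laurent--Massart) or with the Gaussian-norm inequality (\ref{Gaussian_norm}) produces, after optimizing, a constant of the order $3+2\sqrt{2}$ rather than $3$ --- the factor $(en/k)^k$ overcounts badly when $k$ is comparable to $n$, and the single-subset tail is not tight enough to absorb it. The truncation/peeling estimate for $\E[g(\beps)^2]$ is exactly what recovers the sharp constant, and the factor-$\tfrac32$ of slack generated in the last display then comfortably absorbs the $(1+t)$ rescaling into the target probability.
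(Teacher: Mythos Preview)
Your proof is correct. It shares the overall skeleton with the paper's argument---both observe that $g(\beps)=\max_{|\cS|=k}\|\beps_\cS\|$ is $1$-Lipschitz and apply Gaussian concentration around the mean---but the two diverge on how $\E[g(\beps)]$ is bounded. The paper uses the standard Jensen/MGF technique for expected maxima: it exploits the sub-Gaussianity of each $\|\beps_\cS\|$ (again from Lipschitz concentration) together with a union bound over the $\binom{n}{k}$ subsets, arriving at $\E[g(\beps)]\le \sigma_\eps\big(\sqrt{2k\log(en/k)}+\sqrt{k}\big)$ after optimizing over the MGF parameter. You instead bound $\E[g(\beps)^2]$ directly via the order-statistic truncation $\sum_{j\le k}\eps_{(j)}^2\le ku+\sum_i(\eps_i^2-u)_+$ and then apply Jensen. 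Your route is slightly more elementary (only tail bounds, no MGF) and delivers the constant $3$ cleanly; in fact, the paper's displayed bound $\sqrt{2L}+1$ with $L=\log(en/k)$ actually \emph{exceeds} $\sqrt{3L}$ whenever $L<(\sqrt3+\sqrt2)^2\approx 9.9$, so your argument tidies up the constants in the paper's proof as written. One minor correction to your closing commentary: the paper's approach \emph{is} a union bound over subsets, just at the MGF level (to control the expectation) rather than at the tail level---this is less wasteful than the ``naive'' tail union bound you critique, though as you observe it still does not quite reach the constant $3$.
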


 \begin{proof}[Proof of Lemma \ref{lem:Zeps2}] We prove (\ref{jadhyads1}); the proof of (\ref{jadhyads2}) is similar and omitted. 
 
 Since $Z \mapsto \|Z_\cS^\top \beps_\cS\|$ is $\|\beps\|$-Lipschitz and $Z$ and $\beps$ are independent, Lemmas \ref{lem:lipschitz} and \ref{lem:min_lipschitz} yield that for any $t > 0$,
\begin{align} \label{eq:pgb1}
    \P \bigg(\max_{{\cS \subseteq [n], \cS \geq n-k}} \|Z_\cS^\top \beps_\cS\| \geq \E\Big[ \max_{{\cS \subseteq [n], \cS \geq n-k}}   \|Z_\cS^\top \beps_\cS\| \, \big|\, \beps \Big] + t \|\beps\| \,\Big|\, \beps \bigg) \leq e^{-t^2/2} .
\end{align} 

We bound the expectation in (\ref{eq:pgb1}) by modifying the proof of Theorem 6.1 in \cite{wainwright}.
Let $\bxi \sim \mathcal{N}(0,I_n)$ and $\bzeta \sim \mathcal{N}(0,I_p)$ be independent
and 
\[ Y_{\bv,\cS} \coloneqq  \beps_\cS^\top \bxi_\cS + \|\beps\| \bv^\top \spa \bzeta,
\]
where $\bxi_\cS$ is defined analgously to $\by_\cS$. 
Then, conditional on $\beps$, $\{Y_{\bv,\cS}\}$ is a centered Gaussian process over the (separable) index set 
$\{\bv \in \mathbb{S}^{p-1},|\cS| \geq n-k\}$. Importantly, its increments bound those of  $ \beps_\cS^\top Z_\cS \bv$: following page 164 of \cite{wainwright},
    \begin{align} \nonumber
            \E \Big[ \big(\beps_\cS^\top Z_\cS \bv - \beps_{\widetilde \cS}^\top Z_{\widetilde \cS} \, \widetilde \bv  \big)^2  \,\Big|\, \beps \Big] &= \E \Big[ \big( \beps_\cS^\top  I_{n,\cS}  Z \bv - \beps_{\widetilde \cS}^\top I_{n,\widetilde \cS} \, Z \widetilde \bv  \big)^2 \,\Big|\, \beps \Big] \\ &=  \big\| \big( I_{n,\cS}^\top \beps_\cS\big) \bv^\top - \big(I_{n,\widetilde \cS}^\top \, \beps_{\widetilde \cS}\big) \, \widetilde \bv^\top \big\|_F^2 \nonumber \\
    & \leq \big\|I_{n,\cS}^\top \beps_\cS -   I_{n,\widetilde \cS}^\top\,  \beps_{\widetilde \cS} \big\|^2 + \|\beps\|^2 \|\bv - \widetilde \bv\|^2 \nonumber \\ & =   \E \Big[ \big(Y_{\bv,\cS} - Y_{\widetilde \bv, \widetilde \cS} \big)^2 \, \Big| \, \beps \Big]. \label{dfyughunw}
    \end{align} 
Here,  $I_{n,\cS} \in \mathbb{R}^{|\cS| \times n}$ is a submatrix of the identity matrix $I_n$ (defined analogously to $Z_\cS$), and  the final equality holds by the independence of $\bxi$ and $\bzeta$ and the identity
\[\beps_\cS^\top \bxi_\cS - \beps_{\widetilde \cS}^\top \bxi_{\widetilde \cS} = (\beps_{\cS}^\top I_{n,\cS} -\beps_{\widetilde\cS}^\top I_{n,\widetilde \cS})\bxi .\]
%
%
Therefore, by the Sudakov--Fernique inequality (Theorem 2.2.3 of \cite{adler2007random}) and the bound $\E \|\bzeta\| \leq \sqrt{\E \|\bzeta\|^2} = \sqrt{p}$, we have 
    \begin{align}\label{`8923u1ejsd}
       \E\Big[ \max_{{\cS \subseteq [n], \cS \geq n-k}}   \|Z_\cS^\top \beps_\cS\| \, \Big|\, \beps \Big]  & = 
       \E\Big[ \max_{{\cS \subseteq [n], \cS \geq n-k}} \sup_{\bv \in \mathbb{S}^{p-1}}   \beps_\cS^\top Z_\cS \bv \, \Big|\, \beps \Big]  \nonumber \\
       & \leq  \E\Big[ \max_{{\cS \subseteq [n], \cS \geq n-k}} \sup_{\bv \in \mathbb{S}^{p-1}}   Y_{\bv,\cS} \, \Big|\, \beps \Big]  \nonumber \\
       & \leq \E \Big[ \max_{\cS \subseteq [n], |\cS| \geq n-k}      \beps_\cS^\top \bxi_\cS  \, \Big| \, \beps \Big] + \sqrt{p} \|\beps\| .     
    \end{align}

Since $\E[\beps^\top \bxi | \beps ]  = 0 $, the expectation on the right-hand side of (\ref{`8923u1ejsd}) may be written as
\begin{align*}
     \E \Big[ \max_{\cS \subseteq [n], |\cS| \geq n-k}      \beps_\cS^\top \bxi_\cS  \, \Big| \, \beps \Big] & =  \E \Big[ \max_{\cS \subseteq [n], |\cS| \geq n-k}      -\beps_{\cS^c}^\top \bxi_{\cS^c}  \, \Big| \, \beps \Big] \\
     &  =  \E \Big[ \max_{\cS \subset [n], |\cS| \leq k}    \beps_{\cS}^\top \bxi_{\cS}  \, \Big| \, \beps \Big] . 
\end{align*}
%
For any $\eta > 0$, using Jensen's inequality, 
    \begin{align}  \label{eq:polikuj}
       \exp \bigg( \eta \, \E \Big[ \max_{\cS \subset [n], |\cS| \leq k}      \beps_\cS^\top \bxi_\cS  \, \Big| \, \beps \Big] \bigg) & \leq  \E \Big[ \max_{\cS \subset [n], |\cS| \leq k}      \exp\big(\eta\beps_\cS^\top \bxi_\cS\big)  \, \Big| \, \beps \Big]  \nonumber\\
        & \leq \sum_{\cS \subset [n], |\cS| \leq k}    \E \Big[     \exp\big(\eta\beps_\cS^\top \bxi_\cS\big)  \, \Big| \, \beps \Big]   \nonumber\\        & \leq \sum_{\cS \subset [n], |\cS| \leq k} \exp\bigg( \frac{\eta^2 \|\beps_\cS\|^2}{2} \bigg)  \nonumber 
        \\
        & \leq \big|\{\cS \subset [n]: |\cS| \leq k \} \big| \cdot  \exp\bigg( \frac{\eta^2}{2} \max_{\cS \subset [n], |\cS| = k} \|\beps_\cS\|^2 \bigg) . 
    \end{align}
Thus, using (\ref{eqNcK}),
\begin{align*}
            \E \Big[ \max_{\cS \subset [n], |\cS| \leq k}      \beps_\cS^\top \bxi_\cS  \, \Big| \, \beps \Big]  &\leq \frac{\log k}{\eta} + \frac{k}{\eta}\log\Big(\frac{en}{k}\Big) + \frac{\eta }{2} \cdot \max_{\cS \subset [n], |\cS| = k} \|\beps_\cS\|^2  .
\end{align*}
Choosing $\eta$ such that 
\[
\eta \cdot \max_{\cS \subset [n], |\cS| = k} \|\beps_\cS\| = \sqrt{2k\log\Big(\frac{en}{k}\Big)} ,\]
it is easy to show using $k \leq n/2$ that
\begin{align} \label{eq:pgb3}
            \E \Big[ \max_{\cS \subset [n], |\cS| \leq k}      \beps_\cS^\top \bxi_\cS  \, \Big| \, \beps \Big]  
             & \leq   \sqrt{3 k \log\Big(\frac{en}{k}\Big)} \cdot  \max_{\cS \subset [n], |\cS| = k} \|\beps_\cS\| . 
\end{align}
The proof follows from (\ref{eq:pgb1}), (\ref{`8923u1ejsd}), and (\ref{eq:pgb3}). 
\end{proof}

\begin{proof}[Proof of Lemma \ref{lem23j9874dx40}]

The proof of (\ref{123cas}) is similar to (\ref{eq:pgb1})--(\ref{eq:pgb3}). Since $\beps \mapsto \|\beps_\cS\|$ is $1$-Lipschitz, Lemmas \ref{lem:lipschitz} and Lemma \ref{lem:min_lipschitz} yield
\begin{gather}
    \P \bigg(  \max_{\cS \subset [n], |\cS| = k}    \|  \beps_\cS \| \geq \E \Big[ \max_{\cS \subset [n], |\cS| = k}    \|  \beps_\cS \| \Big] +  \sigma_\eps t    \bigg) \leq e^{-t^2/2} , \label{vsuhhwvd8f} \\  
     \E \Big[ \exp\big(\eta (\|\beps_\cS\| -\E \|\beps_\cS\| ) \big)\Big] \leq \exp\bigg( \frac{\eta^2 \sigma_\eps^2 }{2}  \bigg) . \nonumber  
\end{gather}
Thus, for $\eta > 0$, 
\begin{gather*}
    \exp \Big(\eta \, \E \Big[ \max_{\cS \subset [n], |\cS| = k}    \|  \beps_\cS \|\Big] \Big) \leq \sum_{\cS \subset [n], |\cS| = k} \E \Big[ \exp \big( \eta \|\beps_\cS\|\big)\Big] \leq 
    \Big(\frac{en}{k}\Big)^k \exp \bigg( \frac{\eta^2 \sigma_\eps^2 }{2}   + \eta \, \E \|\beps_\cS\| \bigg) ,    \\
     \E \Big[ \max_{\cS \subset [n], |\cS| = k}    \|  \beps_\cS \|\Big]   \leq \frac{k}{\eta} \log\Big(\frac{en}{k}\Big) + \frac{\eta\sigma_\eps^2}{2} + \sigma_\eps \sqrt{k} .  
\end{gather*}
Optimizing the choice of $\eta$, 
we obtain
\begin{align}
    \E \Big[ \max_{\cS \subseteq [n], |\cS| = k}    \|  \beps_\cS \| \Big] 
    \leq \sigma_\eps \sqrt{3k \log \Big(\frac{en}{k}\Big)} .  \label{vsuhhwvd8f2}
\end{align}
The proof follows from (\ref{vsuhhwvd8f}) and (\ref{vsuhhwvd8f2}).

\end{proof}

\begin{proof}[Proof of Lemma \ref{lem:lb_sigma_p2}]  We first prove (\ref{lemmaA8.1}).    By Lemmas \ref{lem:lipschitz}--\ref{lem:sv_lipschitz},
    \begin{align} \label{eq:gaus_lip}
    \P \bigg(    \min_{\cS \subseteq [n], |\cS| \geq n-k} \sigma_p(Z_\cS) \leq \E \Big[ \min_{\cS \subseteq [n], |\cS| \geq n-k} \sigma_p(Z_\cS) \Big] - t \bigg) \leq e^{-t^2/2} .     
    \end{align} 
It therefore suffices to prove that
\begin{align} \label{eq:jkl45}
    \E \Big[ \min_{\cS \subseteq [n], |\cS| \geq n-k} \sigma_p(Z_\cS) \Big] \geq \sqrt{n} - \sqrt{p} - \sqrt{3k \log \Big(\frac{en}{k}\Big)}  . 
\end{align}

\noindent Since $p  \leq n-k$, 
    \begin{gather} 
        -\sigma_p(Z_\cS) =   \sup_{\bv \in \mathbb{S}^{p-1}} ( -\|Z_\cS v \|) = \sup_{\bv \in \mathbb{S}^{p-1}} \inf_{\bu \in \mathbb{S}^{|\cS|-1}} \bu^\top Z_\cS \bv , \nonumber \\
        \min_{\cS \subseteq [n], |\cS| \geq n-k} \sigma_p(Z_\cS)  = - \max_{\cS \subseteq [n], |\cS| \geq n-k}     \sup_{\bv \in \mathbb{S}^{p-1}} \inf_{\bu \in \mathbb{S}^{|\cS|-1}} \bu^\top Z_\cS \bv . \label{eq:maxmin}   
    \end{gather}
Let $\bxi \sim \mathcal{N}(0,I_n)$ and $\bzeta \sim \mathcal{N}(0,I_p)$ be independent
and define \[Y_{\bu,\bv,\cS} \coloneqq  \bu^\top \bxi_\cS + \bv^\top \spa \bzeta.\]
Then, $\{Y_{\bu,\bv,\cS}\}$ is a centered Gaussian process over the (separable) index set $\{\bu \in \mathbb{S}^{|\cS|-1}, \bv \in \mathbb{S}^{p-1}, |\cS| \geq n-k\}$, with increments bounding those of   $\bu^\top Z_\cS \bv$: ,
\begin{equation*}
    \begin{aligned}
            \E \big(\bu^\top Z_\cS \bv - \widetilde \bu^\top Z_{\widetilde \cS} \, \widetilde \bv\big)^2 &= \E \big( \bu^\top I_{n,\cS}  Z \bv - \widetilde \bu^\top  I_{n,\widetilde \cS} Z \widetilde \bv\big)^2 \\
             &=  \big\|  \big(I_{n,\cS}^\top \bu\big) \bv^\top - \big(  I_{n,\widetilde \cS}^\top \, \widetilde \bu\big) \widetilde \bv^\top \big\|_F^2 \\
    & \leq \big\|I_{n,\cS}^\top \bu -   I_{n,\widetilde \cS}^\top \, \widetilde \bu\big\|^2 + \|\bv - \widetilde \bv\|^2 \\ & =   \E \big(Y_{\bu,\bv,\cS} - Y_{\widetilde \bu, \widetilde \bv, \widetilde \cS}\big)^2 . 
    \end{aligned}
\end{equation*}
Therefore, Gordon's inequality ((6.65) in \cite{wainwright}) and (\ref{eq:maxmin}) yield 
    \begin{align} \label{eq:gordon}
       -\E \Big[ \min_{\cS \subseteq [n], |\cS| \geq n-k} \sigma_p(Z_\cS) \Big] & 
       \leq \E \Big[ \max_{\cS \subseteq [n], |\cS| \geq n-k}     \sup_{\bv \in \mathbb{S}^{p-1}} \inf_{\bu \in \mathbb{S}^{|\cS|-1}} Y_{\bu,\bv,\cS} \Big] \nonumber\\
       & \leq \E \Big[ \max_{\cS \subseteq [n], |\cS| \geq n-k}     \inf_{\bu \in \mathbb{S}^{|\cS|-1}}  \bu^\top \bxi_\cS \Big] + \E \| \bzeta \| \nonumber\\
       &  = - \E \Big[ \min_{\cS \subset [n], |\cS| = n-k}   \|\bxi_\cS\| \Big] + \E \| \bzeta \| \nonumber  
       \\& \leq - \E \|\bxi\| + \E \Big[ \max_{\cS \subset [n], |\cS| = k}   \|\bxi_\cS\| \Big]  +\E \|\bzeta\| ,
    \end{align}
    where the last inequality  follows from 
\[ \|\bxi\| \leq  
\max_{\cS \subset [n], |\cS| = k}   \|\bxi_\cS\| + \min_{\cS \subset [n], |\cS| =  n-k}   \|\bxi_\cS\| .
\]
Since $- \E \|\bxi\| + \E \|\bzeta\| \leq -\sqrt{n} + \sqrt{p}$ (see page 188 of \cite{wainwright}),  (\ref{eq:jkl45}) follows from (\ref{vsuhhwvd8f2}) and (\ref{eq:gordon}), completing the proof of (\ref{lemmaA8.1}).

Next, we prove (\ref{lemmaA8.2}).
     By Lemmas \ref{lem:lipschitz}--\ref{lem:sv_lipschitz},
    \begin{align} \label{eq:gaus_lipasdf}
    \P \bigg(    \max_{\cS \subset [n], |\cS| \geq n-k} \sigma_1(Z_{\cS^c}) \geq \E \Big[ \max_{\cS \subset [n], |\cS| \geq n- k} \sigma_1(Z_{\cS^c}) \Big] + t \bigg) \leq e^{-t^2/2} .     
    \end{align} 
By the Sudakov--Fernique inequality, 
\begin{align} \label{asd98yadsfg345c2t53}
    \E \Big[ \max_{\cS \subset [n], |\cS| \geq n-k} \sigma_1(Z_{\cS^c}) \Big] & \leq \E \Big[ \max_{\cS \subseteq [n], |\cS| \geq n-k}     \sup_{\bu \in \mathbb{S}^{|\cS^c|-1}} \sup_{\bv \in \mathbb{S}^{p-1}} Y_{\bu,\bv,\cS^c} \Big] \nonumber . \\
    & \leq \E \Big[ \max_{\cS \subset [n], |\cS| = k}   \|\bxi_\cS\| \Big] + \sqrt{p} . 
\end{align}
The result follows from (\ref{vsuhhwvd8f2}), (\ref{eq:gaus_lipasdf}), and (\ref{asd98yadsfg345c2t53}).
\end{proof}


\comment
{\color{red}

Finally, note $Z_{\cS^c}\tbv_{\cS} \sim N(0,I_{\|\cS^c\|})$. Hence, 
for any $t_2\geq0$
\begin{align}
    \P \Bigg(  \frac{\big\| Z_{\cS^c}\tbv_{\cS}\big\|}{\sqrt{n}}  
    \geq  \frac{\sqrt{|\cS^c|} +t_2}{\sqrt n } \Bigg) 
    \leq 2e^{-ct^2n/2}.
\end{align}

Setting $t_2=1$ implies
\begin{align}
\label{eq:Zv_ub}
    \P \Bigg(  \frac{\big\| Z_{\cS^c}\tbv_{\cS}\big\|}{\sqrt{n}}  
    \geq  \frac{\sqrt{|\cS^c|} +1}{\sqrt n } \Bigg) 
    \leq 2e^{-cn/2}.
\end{align}

combining \eqref{eq:ub_betas10}-\eqref{eq:Zv_ub},  it follows that 
for any $\delta\in (0,1-\sqrt{\gamma_\cS})$ and $t>0$,
with probability at least $1-4e^{-cn/2}-2e^{-n\delta^2}-2e^{-|\cS|\delta^2}-2e^{-ct\min\{n^2/|\cS^c|^2,\,nt\} }$
\[
 \big\|\widehat \bbeta  - \widehat \bbeta_\cS \big\|
\leq~ 
 2\big\|\Sigma^{-1/2}\big\| 
 B^{-2}_-(\sqrt{\gamma}, \delta)
 \bigg(
 \sqrt{t+\tfrac{k}{n} \E y^2 }
 +2B^{-2}_-(\sqrt{\gamma_\cS}, \delta)(1+\E y^2)  \bigg(\frac{\sqrt{k}+1}{\sqrt{n}}\bigg)\bigg).
\]
To conclude the proof, we  use a union bound over $\{\cS \subseteq [n]: |\cS| \geq n-k\}$ and the inequality
\begin{align} 
\sum_{\ell=1}^k \binom{n}{k} \leq k \Big(\frac{e n}{k}\Big)^k ,
\end{align}
which holds for $k \leq n/2$.
}


\comment{
\begin{lemma} \label{lemma:psd}
   Let $\cS \subset [n]$ and assume that $Z_\cS$ has full column rank. Then, the matrix
    \[
    \big(Z^\top \spa Z\big)^{-1} \big(Z_{\cS^c}^\top Z_{\cS^c}\big) \big(Z_{\cS}^\top Z_{\cS}\big)^{-1}
    \]
    is symmetric and positive semi-definite.
\end{lemma}

\begin{proof}
 Using the identity  $Z_{\cS^c}^\top Z_{\cS^c} = Z^\top \spa Z - Z_\cS^\top Z_\cS  $, we have
    \[
         \big(Z^\top \spa Z\big)^{-1} \big(Z_{\cS^c}^\top Z_{\cS^c}\big) \big(Z_{\cS}^\top Z_{\cS}\big)^{-1}
          =    \big(Z_{\cS}^\top Z_{\cS}\big)^{-1} - \big(Z^\top\spa Z\big)^{-1},
    \] 
    which is symmetric. Since  $Z_\cS^\top Z_\cS \succ 0$ by assumption and $Z^\top \spa Z - Z_\cS^\top Z_\cS \succeq 0$, 
    \[ \big(Z_\cS^\top Z_\cS\big)^{-1/2} Z^\top \spa Z \big(Z_\cS^\top Z_\cS\big)^{-1/2} - I_p \succeq 0 . 
    \]
    As the eigenvalues of $\big(Z_\cS^\top Z_\cS\big)^{-1/2} Z^\top \spa Z \big(Z_\cS^\top Z_\cS\big)^{-1/2}$ and $\big(Z^\top \spa Z\big)^{1/2} \big(Z_\cS^\top Z_\cS \big)^{-1} \big(Z^\top \spa Z\big)^{1/2}$ are equal, this implies 
     \[\big(Z^\top \spa Z\big)^{1/2} \big(Z_\cS^\top Z_\cS \big)^{-1} \big(Z^\top \spa Z\big)^{1/2} - I_p \succeq 0 . 
    \]
Thus,
    \[
     \big(Z_\cS^\top Z_\cS \big)^{-1} - \big(Z^\top \spa Z\big)^{-1} \succeq 0 .
    \]
\comment{Let $Z = U \Lambda V^\top$ denote the singular value decomposition of $Z$, where  $U\in \mathbb{R}^{n\times p}$ is semi-orthogonal, $\Lambda \in \mathbb{R}^{p \times p}$ is diagonal, and $V\in \mathbb{R}^{p\times p}$ is orthogonal. 
    Let $D_\cS \in \mathbb{R}^{n \times n}$ denote the diagonal matrix with  $(D_\cS)_{ii} = \ind (i \in \cS)$.
    Since $Z_\cS^\top Z_\cS = Z^\top \spa D_\cS Z$,
    \begin{align}
        \big(Z_{\cS}^\top Z_{\cS}\big)^{-1} - \big(Z^\top \spa Z\big)^{-1} 
        &= \big(Z^\top \spa D_\cS Z\big)^{-1} - \big(Z^\top \spa Z\big)^{-1}
         \nonumber \\ 
        &= \big(V\Lambda U^\top D_\cS U \Lambda V^\top\big)^{-1} - \big(V\Lambda I_p \Lambda V^\top\big)^{-1} \nonumber\\
        &= V  \Lambda^{-1}\left(
        \big(U^\top D_\cS U\big)^{-1} - I_p
        \right)\Lambda^{-1}V^\top
    \end{align}
    Hence, it is sufficient to prove  $\lambda_{\min}\Big(\big(U^\top D_\cS U\big)^{-1}\Big)\geq1$. 
    Indeed,
    \[
        \lambda_{\min}\Big(\big(U^\top D_\cS U\big)^{-1}\Big) 
        = \Big(\lambda_{1}\big(U^\top D_\cS U\big)\Big)^{-1}
         = \|U^\top D_\cS U\big\|^{-1}
         \geq \big\|D_\cS\big\|^{-1} = 1.
    \]
}
\end{proof}
}
\comment{
Concentration inequality for the norm of isotropic Gaussians. 
{
\begin{lemma}[\citet{Vershynin12}, chapter 3]
\label{lemma:gauss_conc_ineq}
    Let $\bz \sim \mathcal{N}(0, I_p)$. There exists universal constant $c>0$ such that for any $t>0$
    \[
    \P(\|\bz\|\geq \sqrt{p} + t)\leq e^{-ct^2}.
    \] 
\end{lemma} {\color{red} By Lipschitz lemma, this holds with c = 1/2. don't need to state additional lemma}
}
}

\section{Proof of Theorem \ref{thm:asymp_small_p_large_k}}

Theorem \ref{thm:asymp_small_p_large_k} follows from Theorem \ref{thrm:small_p_large_k}. Throughout this appendix,  $C,c > 0$ denote absolute constants whose values may vary from line to line.  In the context of Model \ref{mod2}, define 
\begin{align} Q_{\alpha,t} \coloneqq (1-t)\E\big[ \varepsilon z \,\ind(\varepsilon z>q_{1-\alpha+t})\big] - t \| \varepsilon z \|_{\psi_1} , \label{defQ} \end{align}
where $z\sim \mathcal{N}(0,1)$ is  independent of $\varepsilon$ and $q_{1-\alpha+t}$ is the $(1-\alpha+t)$-quantile of $\varepsilon z$.

\begin{theorem}\label{thrm:small_p_large_k}
    Let $(\bx_1, y_1), \ldots, (\bx_n, y_n) $ be i.i.d.\ samples from Model \ref{mod2},
    $\gamma \coloneqq (p-1)/(n-k) < 1$,
    $\alpha \coloneqq k/n <1/2$,
     $\bv \in \mathbb{S}^{p-1}$, and $\eta\coloneqq \sqrt{2\E |\eps|^2 + \|\eps\|_{\psi_2}^2}$. Then, there exists a constant  $c_\eps >0$ depending on the distribution of $\eps$ such that for any $t, \delta \in (0,c_\eps)$, with probability at least
\begin{equation}\label{eq:prob_lower_bound}
1 - 13 e^{-c n t^2} - 2^{-c (n-k-p)\delta^2} - 3 e^{-n(1/2-\alpha)^2},
\end{equation}
the following inequality holds:
    \begin{align}
    \label{eq:small_p_large_k}
    &\Delta_k(\bv) \geq \big\|\Sigma^{-1/2}\bv\big\|\left(
        \frac{1-\gamma}{1-\gamma-t}\bigg(\frac{Q_{\alpha,t}}{1-\alpha+3t} -  3\hspace{.05em} \eta \hspace{.05em} \delta\bigg) - \frac{\eta \hspace{.05em} t}{(1-\sqrt{\gamma(1-\alpha)} -t)^2}
    \right).
    \end{align}
  \end{theorem}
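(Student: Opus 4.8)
Since $\Delta_k(\bv)$ is a maximum over subsets, it suffices to exhibit one well-chosen $\cS$ with $|\cS|=n-k$ for which $\langle\widehat\bbeta-\widehat\bbeta_\cS,\bv\rangle$ already dominates the right-hand side of \eqref{eq:small_p_large_k}. The natural choice is to remove the $k$ samples that are most aligned (through $\varepsilon_i$ and the $\bv$-direction) with the quantity we want to move, and then estimate the resulting change exactly.

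\textbf{Step 1 (Reduction to isotropic design and a fixed coordinate).} Whiten: $Z=X\Sigma^{-1/2}$. Under Model \ref{mod2} the leave-$k$-out identity $Z_\cS^\top Z_\cS(\widehat\bbeta-\widehat\bbeta_\cS)=Z_{\cS^c}^\top\hat\br_{\cS^c}$ holds, where $\hat\br=(I_n-P_Z)\beps$ is the OLS residual vector, which does not depend on $\bbeta$. Hence $\langle\widehat\bbeta-\widehat\bbeta_\cS,\bv\rangle=\|\Sigma^{-1/2}\bv\|\cdot\bu^\top(Z_\cS^\top Z_\cS)^{-1}Z_{\cS^c}^\top\hat\br_{\cS^c}$ with $\bu=\Sigma^{-1/2}\bv/\|\Sigma^{-1/2}\bv\|$, and by rotational invariance of the standard Gaussian matrix $Z$ we may take $\bu=\be_1$. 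So the task becomes lower bounding $\max_\cS \be_1^\top(Z_\cS^\top Z_\cS)^{-1}Z_{\cS^c}^\top\hat\br_{\cS^c}$ by the parenthesized expression in \eqref{eq:small_p_large_k}.

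\textbf{Step 2 (The extreme subset and a main/error split).} Let $g_i:=\be_1^\top\bz_i$, so $(g_i)_{i\le n}$ are i.i.d.\ $\mathcal N(0,1)$, independent of $(\varepsilon_i)_{i\le n}$, and $s_i:=\varepsilon_i g_i$ is a \emph{symmetric} sub-exponential variable (symmetric since $g$ is symmetric and independent of $\varepsilon$). Take $\cS^c$ to be the indices of the $k$ largest $s_i$. Writing $\bz_i=g_i\be_1+\bz_i^{\perp}$ with $\bz_i^\perp\perp\be_1$, and $\hat\br=\beps-Z\hat\bdelta$ with $\hat\bdelta:=(Z^\top Z)^{-1}Z^\top\beps$, one obtains
\[
\be_1^\top(Z_\cS^\top Z_\cS)^{-1}Z_{\cS^c}^\top\hat\br_{\cS^c}=\Big(\sum_{i\in\cS^c}\varepsilon_i g_i\Big)\,\be_1^\top(Z_\cS^\top Z_\cS)^{-1}\be_1+E_1+E_2,
\]
with $E_1:=\be_1^\top(Z_\cS^\top Z_\cS)^{-1}\sum_{i\in\cS^c}\varepsilon_i\bz_i^{\perp}$ and $E_2:=-\be_1^\top(Z_\cS^\top Z_\cS)^{-1}Z_{\cS^c}^\top Z_{\cS^c}\hat\bdelta$; these collect the off-$\be_1$ directions and the $O(\sqrt{p/n})$ gap between residuals and noise. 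The first summand is the main term, $E_1,E_2$ are error terms.

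\textbf{Step 3 (Main term).} Two ingredients. (i) One-sided concentration of the truncated empirical sum: by the Dvoretzky--Kiefer--Wolfowitz inequality, with probability $1-e^{-cnt^2}$ the number of indices with $s_i>q_{1-\alpha+t}$ is at most $k$ (here the continuity of the law of $\varepsilon z$ near its $(1-\alpha)$-quantile forces $t<c_\eps$ and makes $q_{1-\alpha+t}$ well defined), so these indices all lie in $\cS^c$; since $q_{1-\alpha}>0$ for $\alpha<1/2$ by symmetry of $\varepsilon z$, the remaining top-$k$ values are nonnegative, and Bernstein's inequality (Lemma \ref{lem:bern}) applied to $s_i\ind(s_i>q_{1-\alpha+t})$ gives $\sum_{i\in\cS^c}\varepsilon_i g_i\ge n\,Q_{\alpha,t}$ on an event of probability $1-e^{-cnt^2}$. (ii) A lower bound on $\be_1^\top(Z_\cS^\top Z_\cS)^{-1}\be_1=1/\|\tilde\bz_1\|^2$, where by Frisch--Waugh--Lovell (block inversion) $\tilde\bz_1$ is the residual of the first column of $Z_\cS$ regressed on the remaining $p-1$ columns: conditionally on the kept indices and on $\beps$, this residual projection is independent of $g_\cS$ and has mean $(1-\gamma)I$, so $\|\tilde\bz_1\|^2$ concentrates around $(1-\gamma)\sum_{i\in\cS}g_i^2$, and $\sum_{i\in\cS}g_i^2$ is bounded above by $(n-k)(1+O(t))$ using that the removed extreme indices carry at least an $\alpha$-fraction of $\sum_i g_i^2$ (via Cauchy--Schwarz against $\sum_{i\in\cS^c}\varepsilon_i^2$ together with the lower bound on $\sum_{i\in\cS^c}\varepsilon_i g_i$ just obtained). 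Multiplying (i) and (ii) produces the first summand of \eqref{eq:small_p_large_k}.

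\textbf{Step 4 (Error terms) and the main obstacle.} Bound $|E_1|\le\|(Z_\cS^\top Z_\cS)^{-1}\be_1\|\cdot\big\|\sum_{i\in\cS^c}\varepsilon_i\bz_i^{\perp}\big\|$ and $|E_2|\le\|(Z_\cS^\top Z_\cS)^{-1}\be_1\|\cdot\sigma_1(Z_{\cS^c})^2\cdot\|\hat\bdelta\|$. For the operator-norm factors use the \emph{uniform} Gaussian singular-value bounds of Lemma \ref{lem:lb_sigma_p2} (the Gordon/Sudakov--Fernique arguments already developed in the appendix) to get $\|(Z_\cS^\top Z_\cS)^{-1}\be_1\|\le\sigma_p(Z_\cS)^{-2}$ and a uniform bound on $\sigma_1(Z_{\cS^c})$; for $\|\hat\bdelta\|$ use $\|\hat\bdelta\|\lesssim\eta\sqrt{p/n}$ (Hanson--Wright, as in the proof of Theorem \ref{thrm:consistency}); and for $\big\|\sum_{i\in\cS^c}\varepsilon_i\bz_i^{\perp}\big\|$ use, conditionally on $\beps$ and $\cS^c$, a Gaussian-process comparison in the spirit of Lemma \ref{lem:Zeps2}. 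Substituting $p-1=\gamma(1-\alpha)n$, $k=\alpha n$ and routing the polylogarithmic slack into the free parameters, $|E_1|+|E_2|$ is at most $\tfrac{1-\gamma}{1-\gamma-t}\,3\eta\delta+\tfrac{\eta t}{(1-\sqrt{\gamma(1-\alpha)}-t)^2}$ on an event of probability \eqref{eq:prob_lower_bound}, and combining with Step 3 gives \eqref{eq:small_p_large_k}. The crux is getting the sharp prefactor $\tfrac{1}{1-\alpha}$ rather than the weaker constant produced by naive operator-norm bounds: this needs the FWL reduction plus the fact that the removed outliers absorb a non-negligible share of $\sum_i g_i^2$ (Step 3(ii)), together with error control to precision $O(\eta t)+O(\eta\delta)$, for which union bounds over the $\binom{n}{k}$ subsets are too lossy and Gaussian comparison inequalities are essential.
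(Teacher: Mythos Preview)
Your skeleton---rotational reduction to $\be_1$, choosing $\cS^c$ as the top-$k$ indices of $\varepsilon_iz_{i1}$, Frisch--Waugh--Lovell for $\be_1^\top(Z_\cS^\top Z_\cS)^{-1}\be_1$, and order-statistics control of $\sum_{i\in\cS^c}\varepsilon_ig_i$---matches the paper, and Step~3(i) is essentially Lemma~\ref{lemma:lb_sum_w1}. The gap is in the error control. Your Cauchy--Schwarz bounds in Step~4 give $|E_1|\lesssim\|(Z_\cS^\top Z_\cS)^{-1}\be_1\|\cdot\sqrt{p}\,\|\beps_{\cS^c}\|$ and $|E_2|\lesssim\|(Z_\cS^\top Z_\cS)^{-1}\be_1\|\cdot\sigma_1^2(Z_{\cS^c})\cdot\|\hat\bdelta\|$; substituting $p-1=\gamma(n-k)$, $k=\alpha n$, $\|\hat\bdelta\|\asymp\eta\sqrt{p/n}$, and $\|(Z_\cS^\top Z_\cS)^{-1}\be_1\|\asymp 1/((1-\gamma)(n-k))$ yields $|E_1|+|E_2|=\Theta(\eta\sqrt{\gamma})$, which does not shrink as $t,\delta\to 0$ and hence cannot be ``routed into the free parameters'' as claimed. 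Likewise, the Cauchy--Schwarz argument in Step~3(ii) for $\sum_{i\in\cS}g_i^2\le(n-k)(1+O(t))$ does not deliver this bound uniformly: since $\cS^c$ is selected for large $\varepsilon_ig_i$, the denominator $\sum_{i\in\cS^c}\varepsilon_i^2$ is not close to $k\,\E\varepsilon^2$, and the resulting lower bound on $\sum_{i\in\cS^c}g_i^2$ depends on $\alpha$ and on the tail of $\varepsilon$ rather than producing a clean $\alpha$-fraction.

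The paper fixes both issues through a different decomposition. It writes $\widetilde\Delta_k=A-B$ with $A=\be_1^\top(Z^\top Z)^{-1}Z^\top\beps$ and $B=\be_1^\top(Z_\cS^\top Z_\cS)^{-1}Z_\cS^\top\beps_\cS$; the term $A$ is handled by Sherman--Morrison and a one-dimensional Gaussian Lipschitz bound (Lemma~\ref{lemma:ub_ZTZ_e1}), giving $A\ge-\eta t/(1-\sqrt{\gamma(1-\alpha)}-t)^2$. For $B$, the key is the exact block-inverse identity $B=\be_1^\top(Z_\cS^\top Z_\cS)^{-1}\be_1\cdot Z_{\cS,1}^\top P\beps_\cS$, where $P$ projects onto the orthogonal complement of the span of $Z_{\cS,-1}$ and is \emph{independent} of $(\cS,Z_{\cS,1},\beps_\cS)$ (Lemma~\ref{lemma:id_e1_inv(ZsZs)_Zs_eps}); Johnson--Lindenstrauss applied to the scalar $Z_{\cS,1}^\top P\beps_\cS$ (Lemma~\ref{lemma:JL_product}) then gives $Z_{\cS,1}^\top P\beps_\cS\le(1-\gamma)(Z_{\cS,1}^\top\beps_\cS+\delta\|Z_{\cS,1}\|\|\beps_\cS\|)$, producing the $3\eta\delta$ error directly with no $\sqrt{\gamma}$ loss. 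For the bound on $\sum_{i\in\cS}g_i^2$, the paper's Lemma~\ref{lem:ub_z2} combines DKW, Bernstein, and the pointwise inequality $\E[z^2\ind(\varepsilon z\le q)]\le\P(\varepsilon z\le q)$ (Lemma~\ref{lemma:ub_Ez|t}).
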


\begin{remark}\label{rem:c_eps}
The restriction $t,\delta\in(0,c_\varepsilon)$ ensures the
leading term in \eqref{eq:small_p_large_k} is positive. Indeed, this term is positive for all sufficiently small $t, \delta > 0$ since $Q_{\alpha,t}$ is continuous in $t$ and  
$Q_{\alpha,0}>0$, the latter inequality following from the symmetry of the distribution of $\eps z$ and $\alpha < 1/2$.
\end{remark}

 Theorem \ref{thm:asymp_small_p_large_k} follows from Theorem \ref{thrm:small_p_large_k} by taking $t, \delta \rightarrow 0$, in which case (\ref{eq:prob_lower_bound}) converges to one and \eqref{eq:small_p_large_k} converges to (\ref{eq:asymp_small_p_large_k}). In particular, taking $t, \delta = O(n^{-1/2 + \iota})$ for $\iota \in (0, 1/2)$, we have
\begin{align*}
    \Delta_k(\bv) \geq ({1-\alpha})^{-1} \big\|\Sigma^{-1/2}\bv\big\|\E\big[ \varepsilon z \,\ind(\varepsilon z>q_{1-\alpha})\big]
    + O_\mathbb{P}\big(n^{-1/2 + \iota}\big).    
\end{align*}

\comment{\eqref{eq:small_p_large_k} for $n\gg 1$. 
In this case, we can take $\delta$ and $t$
to be $O(1/\sqrt{n})$, and by \eqref{eq:prob_lower_bound}, still have a relatively high probability that the bound 
\eqref{eq:small_p_large_k} holds. 
This implies that the second term in \eqref{eq:small_p_large_k} is $O(1/\sqrt{n})$
and is thus negligible with respect to the first term. 
Hence, using the definition of $Q_{\alpha,t}$, 
\begin{align}
\label{eq:Deltak_LB}
    \Delta_k(\bv) \geq \big\|\Sigma^{-1/2}\bv\big\| (1-\gamma)({1-\alpha})^{-1}\E\big[ \varepsilon z \,\ind(\varepsilon z>q_{1-\alpha})\big]
    + O(1/\sqrt{n}).    
\end{align}}

\subsection{Auxiliary Lemmas}
We will use the following lemmas,
whose proofs are deferred to Appendix~\ref{APBLEMM}. In the setting of
Theorem~\ref{thrm:small_p_large_k}, let
$Z \coloneqq X\Sigma^{-1/2} \in \mathbb{R}^{n\times p}$ denote the whitened
data matrix, and let $\beps \coloneqq (\eps_1,\ldots,\eps_n)^\top$.

\begin{lemma} 
    \label{lemma:ub_ZTZ_e1}
    For any $\bv \in \mathbb{S}^{p-1}$ and $t\in \big(0, \, 1-\sqrt{p/(n-1)}\,\big)$, with probability at least $1 - 2e^{-(n-1)t^2/2}$, 
    \begin{align*}
    \be_1^\top (Z^\top  \spa Z)^{-1}Z^\top \bv \geq - \frac{t}{ \sqrt{n-1} \big(1-\sqrt{p/(n-1)}-t\,\big)^2} . 
    \end{align*}
\end{lemma}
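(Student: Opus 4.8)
The plan is to reduce $\be_1^\top(Z^\top \spa Z)^{-1}Z^\top\bv$ to a single scalar ratio via the Frisch--Waugh--Lovell (partialling-out) identity, and then control the numerator and denominator of that ratio by elementary Gaussian tail bounds. Under Model~\ref{mod2} the whitened matrix $Z = X\Sigma^{-1/2}$ has i.i.d.\ $\mathcal N(0,1)$ entries; write $Z = [\,\bg \;\; W\,]$, where $\bg\in\mathbb R^{n}$ is the first column and $W\in\mathbb R^{n\times(p-1)}$ collects the remaining columns, so that $\bg$ and $W$ are independent. Work on the almost sure event that $Z$ has full column rank, let $P_W$ denote the orthogonal projection onto $\mathrm{col}(W)$, and set $\tilde\bg \coloneqq (I-P_W)\bg$ and $\bar\bv \coloneqq (I-P_W)\bv$. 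The first step is the identity
\[
\be_1^\top(Z^\top \spa Z)^{-1}Z^\top\bv \;=\; \frac{\langle \bg,\bar\bv\rangle}{\|\tilde\bg\|^{2}},
\]
which is the standard partitioned least-squares computation: minimizing $\|\bv - w_1\bg - W\bw_2\|^2$ over $\bw_2$ first leaves $\|\bar\bv - w_1\tilde\bg\|^2$, whose minimizer in $w_1$ equals $\langle\bar\bv,\tilde\bg\rangle/\|\tilde\bg\|^2 = \langle\bg,\bar\bv\rangle/\|\tilde\bg\|^2$ by idempotence and symmetry of $I-P_W$. Since the asserted inequality is trivial when $\langle\bg,\bar\bv\rangle\ge 0$ (including the degenerate case $\bar\bv=0$, in which the numerator vanishes identically), it suffices to handle $\langle\bg,\bar\bv\rangle<0$.

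Second, I would bound the numerator. Conditionally on $W$, the vector $\bg$ is still $\mathcal N(0,I_n)$, so $\langle\bg,\bar\bv\rangle$ is centered Gaussian with variance $\|\bar\bv\|^2\le\|\bv\|^2=1$; hence $\P\bigl(\langle\bg,\bar\bv\rangle<-t\sqrt{n-1}\mid W\bigr)\le e^{-(n-1)t^2/2}$, and the same bound holds after taking expectations over $W$.

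Third, I would bound the denominator from below. By the Schur-complement formula, $\|\tilde\bg\|^2=\bg^\top(I-P_W)\bg=1/[(Z^\top \spa Z)^{-1}]_{11}\ge \sigma_p(Z)^2$. Applying Lemma~\ref{Lem:maxeig2} with $\cS=[n]$ and free parameter $t\sqrt{n-1}$ gives, with probability at least $1-e^{-(n-1)t^2/2}$,
\[
\sigma_p(Z)\ \ge\ \sqrt{n}-\sqrt{p}-t\sqrt{n-1}\ \ge\ \sqrt{n-1}\Bigl(1-\sqrt{p/(n-1)}-t\Bigr),
\]
where the last quantity is strictly positive by the hypothesis $t<1-\sqrt{p/(n-1)}$; therefore $\|\tilde\bg\|^2\ge (n-1)\bigl(1-\sqrt{p/(n-1)}-t\bigr)^2$ on this event.

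Finally, I would intersect the two events -- a union bound leaves probability at least $1-2e^{-(n-1)t^2/2}$ -- and combine: either $\langle\bg,\bar\bv\rangle\ge 0$, so the ratio is nonnegative, or
\[
\frac{\langle\bg,\bar\bv\rangle}{\|\tilde\bg\|^{2}}\ \ge\ \frac{-t\sqrt{n-1}}{(n-1)\bigl(1-\sqrt{p/(n-1)}-t\bigr)^{2}}\ =\ \frac{-t}{\sqrt{n-1}\bigl(1-\sqrt{p/(n-1)}-t\bigr)^{2}},
\]
which is exactly the claim. The only genuinely non-mechanical point is recognizing the Frisch--Waugh--Lovell reduction in the first step; once that identity is in hand, everything reduces to the one-dimensional Gaussian tail bound and the singular-value bound (Lemma~\ref{Lem:maxeig2}) already available in the excerpt, with the scaling choice $s=t\sqrt{n-1}$ chosen so that the two tail exponents and the denominator's normalization match the statement.
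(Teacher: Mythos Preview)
Your proof is correct but takes a genuinely different route from the paper's. The paper first uses rotational invariance to replace $\bv$ by $\be_1\in\mathbb R^n$, rewrites $\be_1^\top(Z^\top Z)^{-1}Z^\top\be_1=\be_1^\top(Z^\top Z)^{-1}\bz_1$ in terms of the first \emph{row} $\bz_1$, and then applies Sherman--Morrison (leave-one-row-out) to get $\be_1^\top S_{-1}^{-1}\bz_1/(1+\bz_1^\top S_{-1}^{-1}\bz_1)$; the denominator is trivially $\ge 1$, and the numerator is controlled via Gaussian Lipschitz concentration in $\bz_1$ combined with Lemma~\ref{Lem:maxeig2} applied to the $(n-1)\times p$ matrix $Z_{-1}$. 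You instead partial out the last $p-1$ \emph{columns} via Frisch--Waugh--Lovell, obtaining $\langle\bg,\bar\bv\rangle/\|\tilde\bg\|^2$ directly, with no rotational-invariance step and no Sherman--Morrison. Your numerator bound is the elementary one-dimensional Gaussian tail (conditional on $W$), and your denominator bound uses $\|\tilde\bg\|^2\ge\sigma_p(Z)^2$ and Lemma~\ref{Lem:maxeig2} on the full $n\times p$ matrix $Z$, followed by $\sqrt n\ge\sqrt{n-1}$. The paper's route makes the $n-1$ appear naturally (it is the number of rows of $Z_{-1}$), whereas you recover it by the slightly wasteful $\sqrt n\ge\sqrt{n-1}$ step; on the other hand your argument is shorter and avoids the Sherman--Morrison algebra entirely. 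Both rely on the same two ingredients (a scalar Gaussian tail and Lemma~\ref{Lem:maxeig2}), so neither is more elementary in substance.
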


Let $G_{n,p}$ denote the Grassmannian consisting of all
$p$-dimensional subspaces of $\mathbb{R}^n$.
\begin{lemma}
\label{lemma:id_e1_inv(ZsZs)_Zs_eps}
Let $\cS$ contain the indices of the smallest $n-k$ values in $ \{\eps_i z_{i1}\}_{i\in [n]}$. Then, 
\begin{align} \label{qwertyuiop1}\be_1^\top \big(Z_\cS^\top Z_\cS\big)^{-1}Z_{\cS}^\top \beps_\cS \stackrel{\,d\,}{=} \be_1^\top \big(Z_\cS^\top Z_\cS\big)^{-1} \be_1\big( \be_1^\top Z_\cS^\top P \beps_\cS\big) ,  \end{align} 
where $P$ is the orthogonal projection onto a
subspace drawn uniformly from 
$G_{n-k,n-k-p+1}$, independently of $(\cS,  Z_\cS \be_1, \beps_\cS)$. Moreover, for any $t\in (0,1)$,
\begin{align} \label{qwertyuiop} 
\P\bigg( \Big| \Big( \|Z_{\cS} \be_1\|^2\be_1^\top \big(Z_\cS^\top Z_\cS\big)^{-1} \be_1\Big)^{-1} - 1 + \gamma \Big| \geq t \bigg) \leq 2 e^{-c(n-k-p+1)t^2} ,
\end{align}
where $\gamma \coloneqq (p-1)/(n-k)$.
\end{lemma}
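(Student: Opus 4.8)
The identity \eqref{qwertyuiop1} will come from the Frisch--Waugh--Lovell (partitioned regression) formula, while the distributional statements follow from rotational invariance of the Gaussian design combined with a careful conditioning argument. Write $\bw \coloneqq Z_\cS\be_1 \in \mathbb{R}^{n-k}$ for the first column of $Z_\cS$ and $W \in \mathbb{R}^{(n-k)\times(p-1)}$ for its remaining columns, so $Z_\cS = [\,\bw \mid W\,]$. Recall $Z = X\Sigma^{-1/2}$ has i.i.d.\ $\mathcal{N}(0,I_p)$ rows, independent of $\beps$, and that $p-1 < n-k$ since $\gamma<1$; hence $W$, and therefore $Z_\cS$, has full column rank almost surely. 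Set $M_W \coloneqq I_{n-k} - W(W^\top W)^{-1}W^\top$, the orthogonal projection onto $\mathrm{range}(W)^\perp$, a subspace of dimension $n-k-p+1$. The first step is the algebraic partitioned-regression identities
\begin{align*}
\be_1^\top(Z_\cS^\top Z_\cS)^{-1}Z_\cS^\top\beps_\cS &= \frac{\bw^\top M_W \beps_\cS}{\bw^\top M_W \bw}, & \be_1^\top(Z_\cS^\top Z_\cS)^{-1}\be_1 &= \frac{1}{\bw^\top M_W \bw},
\end{align*}
valid whenever $Z_\cS$ has full column rank. Multiplying the second identity by $\bw^\top M_W\beps_\cS = \be_1^\top Z_\cS^\top M_W\beps_\cS$ reproduces the right-hand side of \eqref{qwertyuiop1} with $P \coloneqq M_W$; so \eqref{qwertyuiop1} holds in fact as an exact identity, once we have identified the law of $P$.

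It then remains to pin down the law of $P = M_W$. The key observation is that $\cS$, $Z_\cS\be_1 = \bw$, and $\beps_\cS$ are all measurable functions of $(Z\be_1,\beps)$ — the first column of $Z$ together with the noise vector — whereas $W$ is built from columns $2,\dots,p$ of $Z$, which are independent of $(Z\be_1,\beps)$. Therefore, conditionally on $(Z\be_1,\beps)$, a fortiori on $(\cS, Z_\cS\be_1,\beps_\cS)$, the matrix $W$ is an $(n-k)\times(p-1)$ matrix with i.i.d.\ $\mathcal{N}(0,1)$ entries; by rotational invariance of the standard Gaussian, $\mathrm{range}(W)$ is Haar-uniform on $G_{n-k,p-1}$, hence $\mathrm{range}(P) = \mathrm{range}(W)^\perp$ is Haar-uniform on $G_{n-k,n-k-p+1}$ and $P$ is independent of the conditioning $\sigma$-field. (Any ties in the definition of $\cS$, possible only if $\varepsilon$ has an atom, are broken by a fixed measurable rule and do not affect the argument.) This proves \eqref{qwertyuiop1}.

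For \eqref{qwertyuiop}, note that $\big(\|Z_\cS\be_1\|^2\,\be_1^\top(Z_\cS^\top Z_\cS)^{-1}\be_1\big)^{-1} = \bw^\top M_W\bw/\|\bw\|^2 = \|M_W\bw\|^2/\|\bw\|^2$. Conditioning on $(Z\be_1,\beps)$ as above, $\bw$ is fixed and $\mathrm{range}(M_W)$ is a uniform $d$-dimensional subspace of $\mathbb{R}^{n-k}$, with $d \coloneqq n-k-p+1$, independent of $\bw$; hence $\|M_W\bw\|^2/\|\bw\|^2$ is distributed as $\sum_{i=1}^{d}g_i^2 \big/ \sum_{i=1}^{n-k}g_i^2$ with $g_i$ i.i.d.\ standard normal, i.e.\ as a $\mathrm{Beta}(d/2,(p-1)/2)$ variable with mean $d/(n-k) = 1-\gamma$. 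The claimed concentration around $1-\gamma$ then follows from standard $\chi^2$ tail bounds applied to the numerator and denominator separately — equivalently, from the Gaussian Lipschitz concentration of Lemma~\ref{lem:lipschitz} applied to $\|(g_1,\dots,g_d)\|$ and $\|(g_1,\dots,g_{n-k})\|$ — together with a short deterministic manipulation of the ratio and the bound $d \le n-k$ used to control the denominator's fluctuation; since the conditional law does not depend on the conditioning value, the bound is unconditional.

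\textbf{Main obstacle.} Everything of substance lies in the conditioning bookkeeping of the second paragraph: one must verify that conditioning on the \emph{data-dependent} index set $\cS$ leaves the remaining columns $W$ conditionally i.i.d.\ Gaussian, so that $\mathrm{range}(W)^\perp$ is genuinely Haar-uniform on $G_{n-k,n-k-p+1}$ and independent of $(\cS,Z_\cS\be_1,\beps_\cS)$. Once this is granted, the partitioned-regression identity and the $\mathrm{Beta}$/$\chi^2$ concentration are routine.
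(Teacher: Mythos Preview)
Your proposal is correct and follows essentially the same route as the paper: the partitioned-regression (Frisch--Waugh--Lovell) identities you state are exactly the block-inverse computation the paper carries out, your conditioning argument that $W$ is i.i.d.\ Gaussian given $(Z\be_1,\beps)$ matches the paper's reasoning that $Z_{\cS,-1}$ is independent of $(\cS,Z_{\cS,1},\beps_\cS)$, and for \eqref{qwertyuiop} the paper invokes Lemma~5.3.2 of Vershynin on $\|P_\cS Z_{\cS,1}\|/\|Z_{\cS,1}\|$, which is precisely the $\mathrm{Beta}$/$\chi^2$ concentration you describe.
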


The following result is a special case of the Johnson--Lindenstrauss lemma:
\begin{lemma}
    \label{lemma:JL_product}
Let $P$ be the orthogonal projection onto a subspace drawn
uniformly from $G_{n,p}$, and let $ \bv, \bw \in\mathbb{S}^{n-1}$.
Then, for any $\delta \in (0,1]$, with probability at least $1-2^{-cp \delta^2}$,
    \[
     \bv^\top P \bw
    \leq 
    \frac{p}{n}\big(\bv^\top\bw  + \delta \big).
    \]
\end{lemma}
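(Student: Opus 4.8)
The plan is to reduce the bilinear form $\bv^\top P\bw$ to one-sided tail bounds on scalar quantities of the form $\|P\bu\|^2$ for a few fixed unit vectors $\bu$, and then to control the latter using the fact that $\|P\bu\|^2$ has a $\mathrm{Beta}(p/2,(n-p)/2)$ distribution, together with a standard chi-square concentration estimate. First I would dispose of the degenerate cases $\bv=\pm\bw$, where the claim reduces directly to an upper tail for $\|P\bv\|^2$. Otherwise set $r\coloneqq \bv^\top\bw\in(-1,1)$ and $\bu_\pm\coloneqq (\bv\pm\bw)/\|\bv\pm\bw\|$, which are unit vectors. Using $P^2=P=P^\top$, the polarization identity gives $\bv^\top P\bw=\tfrac14\big(\|P(\bv+\bw)\|^2-\|P(\bv-\bw)\|^2\big)$, and since $\|\bv\pm\bw\|^2=2\pm 2r$ this becomes
\[
\bv^\top P\bw \;=\; \frac{1+r}{2}\,\|P\bu_+\|^2 \;-\; \frac{1-r}{2}\,\|P\bu_-\|^2 .
\]
In particular $\E[\bv^\top P\bw]=\tfrac pn r$, since $\E\|P\bu\|^2=p/n$ for every fixed unit vector $\bu$. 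Hence, on the event $\{\|P\bu_+\|^2\le \tfrac pn(1+\tfrac\delta2)\}\cap\{\|P\bu_-\|^2\ge \tfrac pn(1-\tfrac\delta2)\}$, using $0\le\tfrac{1\pm r}{2}\le 1$, a one-line computation yields
\[
\bv^\top P\bw \;\le\; \frac{p}{n}\Big(r+\frac{\delta}{2}\Big)\;\le\;\frac{p}{n}\big(\bv^\top\bw+\delta\big),
\]
which is exactly the asserted bound. It then remains only to show that each of the two events above fails with probability at most $2^{-cp\delta^2}$ (after adjusting the absolute constant).

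For a fixed unit vector $\bu$, rotational invariance of the Haar measure on $G_{n,p}$ lets me take the random subspace to be the span of $p$ coordinate axes, so that $\|P\bu\|^2 \eqnd A/(A+B)$ with $A\sim\chi^2_p$ and $B\sim\chi^2_{n-p}$ independent; equivalently $\|P\bu\|^2\sim\mathrm{Beta}(p/2,(n-p)/2)$, with mean $p/n$. Writing $A-p=\sum_{i\le p}(g_i^2-1)$ and $B-(n-p)=\sum_{i>p}(g_i^2-1)$ as sums of i.i.d.\ mean-zero sub-exponential variables and applying Bernstein's inequality (Lemma \ref{lem:bern}), I would show that for $s\in(0,1)$ the events $\{A-p\ge \tfrac{psn}{4(n-p)}\}$, $\{B-(n-p)\le -\tfrac{sn}{4}\}$, and $\{A+B<n/2\}$ each have probability at most $Ce^{-cps^2}$. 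On the complement of their union, the identity $(n-p)A-pB=(n-p)(A-p)-p(B-(n-p))$, combined with the elementary rearrangement of the inequality $A/(A+B)\ge \tfrac pn(1+s)$, leads to a contradiction; this gives $\P\big(\|P\bu\|^2\ge \tfrac pn(1+s)\big)\le Ce^{-cps^2}$, and symmetrically for the lower tail. Taking $s=\delta/2$ and a union bound over $\bu\in\{\bu_+,\bu_-\}$ then completes the proof.

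The only step requiring genuine care is this last chi-square tail estimate: one must track the interplay between $p$ and $n-p$ so that the exponent comes out proportional to $p s^2$ (and not, say, $p^2s^2/n$) uniformly over $p\le n$, which is the reason for isolating the numerator $(n-p)(A-p)-p(B-(n-p))$ and bounding its two pieces separately rather than invoking a crude sub-Gaussian variance-proxy bound for the Beta distribution. Everything else is routine bookkeeping; alternatively, this step may simply be cited as the standard distributional form of the Johnson–Lindenstrauss lemma applied to $\bu_+$ and $\bu_-$.
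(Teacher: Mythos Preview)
Your proposal is correct and follows essentially the same approach as the paper: both arguments reduce $\bv^\top P\bw$ via the polarization identity $\bv^\top P\bw=\tfrac14\big(\|P(\bv+\bw)\|^2-\|P(\bv-\bw)\|^2\big)$ and then invoke Johnson--Lindenstrauss-type concentration of $\|P\bu\|$ around $\sqrt{p/n}$ for the two fixed vectors $\bv\pm\bw$. The only cosmetic differences are that you normalize $\bv\pm\bw$ to unit vectors first (which eliminates a harmless $\delta^2/4$ cross-term present in the paper's computation) and that you sketch the Beta/chi-square tail bound directly, whereas the paper simply cites Lemma~5.3.2 of Vershynin for that step.
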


\comment{Next, we state a simple claim about sub-exponential norms.
\begin{lemma}
\label{lemma:sub_exp_norm_ineq}
Let $u$ be sub-exponential random variable.
For any fixed $s\in \mathbb{R}$
    \begin{align}
        \label{eq:sub_exp_norm_ineq}
        \|u\ind(u>s)\|_{\psi_1} \leq \|u\|_{\psi_1}.
    \end{align}
\end{lemma}
    
\begin{proof}
For every $s\in \mathbb R$ and $t>0$
    \[
    \exp\bigg(\frac{|u|\ind(u>s)}{t}\bigg) \leq \exp\bigg(\frac{|u|}{t}\bigg).
    \]
Hence,
    \begin{align}
    \|u\ind(u>s)\|_{\psi_1}&=
    \inf\left\{ t > 0 : \mathbb{E}\left[\exp\bigg(\frac{|u|\ind(u>s)}{t}\bigg) \right]\leq e \right\} 
    \nonumber \\&
    \leq \inf\left\{ t > 0 : \mathbb{E}\left[\exp\bigg(\frac{|u|}{t}\bigg) \right]\leq e \right\} 
    \nonumber
    = \|u\|_{\psi_1}.
    \end{align}
    
\end{proof}}


The next lemma provides a bound on the sum of the
$n-k$ smallest values among $n$ i.i.d.\ sub-exponential random variables.
\begin{lemma} \label{lemma:lb_sum_w1}
    Let $\xi_1, \ldots, \xi_n$ be independent realizations of a sub-exponential  random variable $\xi$ with a mean zero,  symmetric distribution. Denote the order statistics by $\xi_{(1)} \leq \xi_{(2)} \leq \cdots \leq \xi_{(n)}$. 
    For any $\alpha \coloneqq k/n<1/2$ and $t \in (0, \alpha)$,
    \begin{align} \nonumber 
\P \Bigg(\frac{1}{n} \sum_{i = 1}^{n-k} \xi_{(i)} \geq  -(1-t)\E\big[ \xi \ind(\xi \geq q_{1-\alpha+t})\big]+ t \|\xi\|_{\psi_1} \Bigg) \leq 3e^{-c n t^2} 
+e^{-n (1/2-\alpha)^2} 
,\end{align}
where  $q_{1-\alpha+t}$ is the $(1-\alpha+t)$-quantile of the distribution of $\xi$. 
\end{lemma}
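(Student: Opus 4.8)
The plan is to dominate $\sum_{i=1}^{n-k}\xi_{(i)}$ by a sum over a \emph{deterministic} threshold event and then apply Bernstein's inequality. Write $s := q_{1-\alpha+t}$ for the $(1-\alpha+t)$-quantile of $\xi$. Since $\xi$ is symmetric about $0$ and $1-\alpha+t>1/2$ (using $\alpha<1/2$ and $t>0$), one has $s\ge 0$; in the application of interest, where $\xi=\eps z$ has a continuous full-support law, in fact $s>0$ and $\xi$ has no atoms, which I will use freely below (atoms require only minor extra care and do not arise in our setting). The argument rests on controlling two binomial events together with a concentration bound for the threshold sum.

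First, let $\mathcal{E}_1:=\{\xi_{(n-k)}\le s\}$. Its complement is $\{\#\{i:\xi_i\le s\}<n-k\}$, and since $\#\{i:\xi_i\le s\}$ is $\mathrm{Binomial}(n,\P(\xi\le s))$ with $\P(\xi\le s)\ge 1-\alpha+t=(n-k)/n+t$, Hoeffding's inequality gives $\P(\mathcal{E}_1^c)\le e^{-2nt^2}$. Second, let $\mathcal{E}_2:=\{\xi_{(n-k)}\ge 0\}$. Its complement is $\{\#\{i:\xi_i<0\}\ge n-k\}$, and since $\#\{i:\xi_i<0\}$ is $\mathrm{Binomial}(n,\P(\xi<0))$ with $\P(\xi<0)\le 1/2$ by symmetry and $(n-k)-n/2=n(1/2-\alpha)$, Hoeffding gives $\P(\mathcal{E}_2^c)\le e^{-2n(1/2-\alpha)^2}$. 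This last estimate is exactly the source of the $e^{-n(1/2-\alpha)^2}$ term in the statement, and the place where $\alpha<1/2$ is used.

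Next comes the key deterministic step. On $\mathcal{E}_1\cap\mathcal{E}_2$ we have $0\le\xi_{(n-k)}\le s$. Let $B\subseteq[n]$ be the index set of the $n-k$ smallest values, so $\sum_{i=1}^{n-k}\xi_{(i)}=\sum_{i\in B}\xi_i$, and $B\subseteq\{i:\xi_i\le s\}$ by $\mathcal{E}_1$. Every index in $\{j:\xi_j\le s\}\setminus B$ has rank exceeding $n-k$, hence carries a value $\ge\xi_{(n-k)}\ge0$ on $\mathcal{E}_2$, so
\[
\sum_{i=1}^{n-k}\xi_{(i)} \;=\; \sum_{i:\xi_i\le s}\xi_i \;-\; \sum_{i\in\{j:\xi_j\le s\}\setminus B}\xi_i \;\le\; \sum_{i=1}^n \xi_i\,\ind(\xi_i\le s).
\]
Now $\xi\,\ind(\xi\le s)$ is sub-exponential with $\|\xi\,\ind(\xi\le s)\|_{\psi_1}\le\|\xi\|_{\psi_1}$, and by $\E\xi=0$ and the absence of an atom at $s$ its mean is $\E[\xi\,\ind(\xi\le s)]=-\E[\xi\,\ind(\xi\ge s)]$. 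Applying Bernstein's inequality (Lemma~\ref{lem:bern}) to the centred summands (rescaling the absolute constant, and using $t<\alpha<1$), on an event $\mathcal{E}_3$ of probability at least $1-e^{-cnt^2}$ we obtain
\[
\frac1n\sum_{i=1}^n \xi_i\,\ind(\xi_i\le s) \;\le\; -\E[\xi\,\ind(\xi\ge s)] + t\|\xi\|_{\psi_1} \;\le\; -(1-t)\,\E\!\big[\xi\,\ind(\xi\ge q_{1-\alpha+t})\big] + t\|\xi\|_{\psi_1},
\]
the last inequality because $\E[\xi\,\ind(\xi\ge s)]\ge 0$ (as $s\ge0$). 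Combining the two displays on $\mathcal{E}_1\cap\mathcal{E}_2\cap\mathcal{E}_3$ and the union bound $e^{-2nt^2}+e^{-2n(1/2-\alpha)^2}+e^{-cnt^2}\le 3e^{-cnt^2}+e^{-n(1/2-\alpha)^2}$ yields the lemma.

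I expect the main obstacle to be the deterministic step: a naive bound of $\sum_{i=1}^{n-k}\xi_{(i)}$ by $\sum_i\xi_i\,\ind(\xi_i\le s)$ is \emph{not} valid, because the leftover indices $\{j:\xi_j\le s\}\setminus B$ could carry negative values and thus lower the right-hand side. The fix is the auxiliary event $\mathcal{E}_2=\{\xi_{(n-k)}\ge 0\}$, which forces those leftover values to be nonnegative; its probability cost is precisely the $e^{-n(1/2-\alpha)^2}$ factor, explaining both the structure of the bound and the hypothesis $\alpha<1/2$. Everything else — the binomial tail estimates and the sub-exponential Bernstein step — is routine given Lemma~\ref{lem:bern}.
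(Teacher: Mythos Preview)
Your proof is correct and takes essentially the same approach as the paper: both reduce to the threshold sum $\sum_i \xi_i\,\ind(\xi_i\le q_{1-\alpha+t})$ via the event that the relevant order statistic lies in $[0,q_{1-\alpha+t}]$, control the two binomial tails, and finish with Bernstein. The only differences are cosmetic---the paper uses DKW/Chernoff where you use Hoeffding, and it extracts the $(1-t)$ factor directly from the centering bound $\|\xi\ind(\xi\le q)-\E[\xi\ind(\xi\le q)]\|_{\psi_1}\le\|\xi\|_{\psi_1}-\E[\xi\ind(\xi\le q)]$ rather than via the post-hoc slack $-\E[\xi\ind(\xi\ge s)]\le-(1-t)\E[\xi\ind(\xi\ge s)]$.
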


\begin{lemma}
    \label{lem:ub_z2}
Let $z_1,\ldots,z_n$ be i.i.d.\ standard Gaussians, and, independently, 
let $\eps_1,\ldots,\eps_n$ be i.i.d.\ realizations of a centered, sub-Gaussian random variable
$\eps$.
 Let $\cS$ be the set containing the indices of the smallest $n-k$ values in
    $ \{\eps_i z_{i}\}_{i\in [n]}$.
Then, for any $\alpha \coloneqq k/n<1/2$ and $t \in (0, \alpha)$,
    \begin{align*}
        \P\left(\frac{1}{n}\sum_{i\in \cS}  z_{i}^2 \geq 1-\alpha+ 3t   \right) \leq 3 e^{-cnt^2}.
    \end{align*}
   \end{lemma}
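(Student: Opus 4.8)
The plan is to dominate the adaptively chosen set $\cS$ by a fixed, non-adaptive set and then apply standard concentration. Set $q\coloneqq q_{1-\alpha+t}$, the $(1-\alpha+t)$-quantile of $\eps z$, and $\cT\coloneqq\{i\in[n]:\eps_iz_i\le q\}$. First I would observe that, since $z$ is symmetric and independent of $\eps$, the law of $\eps z$ is symmetric, so $q\ge 0$ because $1-\alpha+t>1/2$ (using $\alpha<1/2$). The event $\cS\subseteq\cT$ holds whenever the $(n-k)$-th smallest value of $\{\eps_iz_i\}_{i\in[n]}$ is at most $q$, equivalently whenever $|\cT|\ge n-k$; and $|\cT|=\sum_{i=1}^n\ind(\eps_iz_i\le q)$ is a sum of i.i.d.\ Bernoulli variables with mean $\P(\eps z\le q)\ge 1-\alpha+t=(n-k)/n+t$, so Hoeffding's inequality gives $\P(\cS\not\subseteq\cT)\le\P(|\cT|<n-k)\le e^{-2nt^2}$. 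On this event $\sum_{i\in\cS}z_i^2\le\sum_{i\in\cT}z_i^2$, since the summands are nonnegative.

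The second ingredient is the deterministic bound $\E[z^2\ind(\eps z\le q)]\le 1-\alpha+t$, equivalently $\E[z^2\ind(\eps z>q)]\ge\P(\eps z>q)=\alpha-t$, which I would derive from the claim $\E[(z^2-1)\ind(\eps z>q)]\ge 0$. Conditioning on $\eps$ and using $q\ge 0$: when $\eps>0$ the event is $\{z>q/\eps\}$ with nonnegative threshold, and the identity $\int_a^\infty(u^2-1)\phi(u)\,\de u=a\phi(a)$ (valid because $\tfrac{\de}{\de u}(-u\phi(u))=(u^2-1)\phi(u)$, where $\phi$ is the standard normal density) is $\ge 0$ for $a\ge 0$; when $\eps<0$ the event is $\{z<q/\eps\}$ with nonpositive threshold, and by symmetry of $\phi$ the corresponding integral is again $\ge 0$; when $\eps=0$ the event contributes $0$. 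Taking expectation over $\eps$ gives the claim.

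Finally, the variables $\xi_i\coloneqq z_i^2\ind(\eps_iz_i\le q)$ are i.i.d.\ with $|\xi_i|\le z_i^2$, so $\|\xi_i-\E\xi_i\|_{\psi_1}\le\|z_i^2\|_{\psi_1}+\E z_i^2$ is an absolute constant. Bernstein's inequality (Lemma~\ref{lem:bern} applied to $\xi_i-\E\xi_i$, with $t<\alpha<1$ so that $\min(t,t^2)=t^2$ after rescaling) gives $\P\big(\tfrac1n\sum_{i=1}^n\xi_i\ge\E\xi_1+t\big)\le e^{-cnt^2}$. A union bound over this event and the containment event then yields
\[
\frac1n\sum_{i\in\cS}z_i^2\;\le\;\frac1n\sum_{i\in\cT}z_i^2\;\le\;\E[z^2\ind(\eps z\le q)]+t\;\le\;1-\alpha+2t\;\le\;1-\alpha+3t
\]
with probability at least $1-e^{-2nt^2}-e^{-cnt^2}\ge 1-3e^{-cnt^2}$, which is the assertion.

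I expect the main obstacle to be the deterministic step. The naive intuition — that removing the $k$ largest products $\eps_iz_i$ also removes large values of $z_i^2$ — is not literally available, since a product can be large through $\eps_i$ rather than through $z_i$. What actually works is the weaker fact that conditioning $z^2$ on $\{\eps z>q\}$ for a threshold $q\ge 0$ can only increase its mean, which is precisely the nonnegativity of the Gaussian moment integral $\int_a^\infty(u^2-1)\phi=a\phi(a)$ for $a\ge 0$; pinning $q$ to the population quantile $q_{1-\alpha+t}$ rather than to the empirical $(n-k)$-th order statistic is what decouples the two sources of randomness so that Hoeffding and Bernstein apply cleanly. A minor technical point is a possible atom of $\eps$ at $0$; this does not occur for the Gaussian noise in the main applications, and otherwise one keeps $q$ equal to the relevant population quantile throughout, which is exactly what the argument above uses.
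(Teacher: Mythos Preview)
Your proof is correct and follows essentially the same route as the paper's: replace $\cS$ by the fixed threshold set $\cT=\{i:\eps_iz_i\le q_{1-\alpha+t}\}$ via a Bernoulli concentration bound (you use Hoeffding; the paper invokes DKW through its earlier inequality \eqref{kqjxh72}), apply Bernstein to the i.i.d.\ sum $\sum z_i^2\ind(\eps_iz_i\le q)$, and control the mean via the conditional Gaussian identity $\int_a^\infty(u^2-1)\phi(u)\,\de u=a\phi(a)\ge 0$ for $a\ge 0$, which is exactly the content of the paper's Lemma~\ref{lemma:ub_Ez|t}. The constants differ slightly (you reach $1-\alpha+2t$, the paper $1-\alpha+3t$), but the argument is the same.
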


\begin{lemma}
    \label{lem:lb_z2}
 Under the setting of Lemma \ref{lem:ub_z2},
    for any $t \in (0, \alpha)$,
    \begin{align*}
        \P\left(\frac{1}{n-k}\sum_{i\in \cS}  z_{i}^2 \leq \frac{1}{4} \right) \leq e^{-cn} + 2e^{-2n(1/2-\alpha)^2} . 
    \end{align*}
\end{lemma}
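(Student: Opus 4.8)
The plan is to bound $\sum_{i\in\cS}z_i^2$ from below by exploiting that the selection set $\cS$ is determined by the signed products $\eps_iz_i$, which---because $z_i$ is Gaussian---carry no information about the magnitudes $z_i^2$. Note first that the trivial bound ``$\sum_{i\in\cS}z_i^2\ge$ sum of the $n-k$ smallest of $z_1^2,\dots,z_n^2$'' is \emph{not} sufficient: the average of the $\lfloor(1-\alpha)n\rfloor$ smallest among $n$ i.i.d.\ $\chi_1^2$ variables tends to $(1-\alpha)^{-1}\E[\chi_1^2\ind(\chi_1^2\le q_{1-\alpha})]$, which drops below $1/4$ once $\alpha$ is moderately large (roughly $\alpha\gtrsim 0.43$), exactly the range in which the claimed error bound $e^{-cn}+2e^{-2n(1/2-\alpha)^2}$ is still non-trivial. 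So the proof must use the extra randomness in the definition of $\cS$.

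Write $\sigma_i:=\sign(\eps_iz_i)$ and $w_i:=|\eps_iz_i|$, so that $\eps_iz_i=\sigma_iw_i$. Assuming, as in the applications, that $\eps$ is non-degenerate (so $\eps_iz_i\neq 0$ a.s.), the signs $\sigma_i=\sign(\eps_i)\sign(z_i)$ are i.i.d.\ symmetric Rademacher variables---$\sign(z_i)$ being a symmetric Rademacher independent of $|z_i|$ and of $\eps_i$---and consequently $(\sigma_i)_{i\le n}$ is independent of $(z_i^2)_{i\le n}$. Let $N_+:=|\{i:\sigma_i=+1\}|\sim\mathrm{Binomial}(n,1/2)$ and set $E_1:=\{N_+\ge k\}$. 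On $E_1$ the $k$ largest values among $\{\eps_iz_i\}_{i\le n}$ are strictly positive and hence all lie in $\{i:\sigma_i=+1\}$, so $\cS^c\subseteq\{i:\sigma_i=+1\}$, i.e.
\[
\{i:\sigma_i=-1\}\subseteq\cS .
\]
By Hoeffding's inequality for the binomial and $k=\alpha n<n/2$, $\P(E_1^c)=\P(N_+\le k)\le e^{-2n(1/2-\alpha)^2}$, which accounts for the second error term.

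On $E_1$ we then have
\[
\sum_{i\in\cS}z_i^2\ \ge\ \sum_{i:\sigma_i=-1}z_i^2\ =\ \tfrac12\sum_{i=1}^n z_i^2\ -\ \tfrac12\sum_{i=1}^n\sigma_i z_i^2 .
\]
The variables $z_i^2-1$ are i.i.d., mean zero, sub-exponential with $O(1)$ norm, so Bernstein's inequality (Lemma~\ref{lem:bern}) gives $\sum_i z_i^2\ge\tfrac45 n$ with probability at least $1-e^{-cn}$. Likewise the $\sigma_iz_i^2$ are i.i.d., mean zero (as $\sigma_i$ is independent of $z_i^2$), sub-exponential with $\|\sigma_iz_i^2\|_{\psi_1}=\|z_1^2\|_{\psi_1}=O(1)$, so Lemma~\ref{lem:bern} gives $\sum_i\sigma_iz_i^2\le\tfrac{3}{10}n$ with probability at least $1-e^{-cn}$. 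On the intersection of these two events with $E_1$,
\[
\sum_{i\in\cS}z_i^2\ \ge\ \tfrac12\cdot\tfrac45 n-\tfrac12\cdot\tfrac{3}{10}n\ =\ \tfrac14 n\ >\ \tfrac14(n-k),
\]
hence $\frac{1}{n-k}\sum_{i\in\cS}z_i^2>1/4$. A union bound over the at most three excluded events yields the claimed probability.

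The main obstacle is the conceptual step above: recognizing that, on a high-probability event, $\cS$ must contain \emph{every} index for which $\eps_iz_i\le 0$, and that this is a ``random coin-flip'' subset of $[n]$ independent of the magnitudes $(z_i^2)$---only this feature rescues the bound, since the worst-case-subset estimate genuinely fails in the non-trivial range of $\alpha$. A secondary, routine point is the treatment of $\eps$: if $\eps$ has an atom at $0$, then $\P(\sigma_i=+1)<1/2$ and $E_1$ should be replaced by $\{|\{i:\eps_iz_i>0\}|\ge k\}$; the argument still runs because the $\eps_i=0$ indices are themselves independent of $(z_i^2)$ and are forced into $\cS$, but the clean form of the error term uses $\eps$ non-degenerate.
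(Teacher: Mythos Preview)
Your proof is correct and follows essentially the same approach as the paper: both hinge on the key observation that, on the high-probability event where at least $k$ of the $\eps_iz_i$ are positive, $\cS$ must contain every index with $\eps_iz_i\le 0$, reducing the problem to a Bernstein bound on an i.i.d.\ sum. The only difference is cosmetic---the paper applies Bernstein directly to $\frac{1}{n}\sum_i z_i^2\ind(\eps_iz_i\le 0)$ (whose mean it computes to be $\ge 1/2$), while you use the decomposition $\sum_{i:\sigma_i=-1}z_i^2=\tfrac12\sum_i z_i^2-\tfrac12\sum_i\sigma_iz_i^2$ and two Bernstein bounds, exploiting the independence of $(\sigma_i)$ and $(z_i^2)$.
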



\comment{
\begin{lemma}
\label{lemma:neg_(I-e1e1)ZTZ_e1}
Let $\cS$ be the set containing the indices of the smallest $n-k$ entries of $ \{\eps_i z_{i1}\}_{i\in [n]}$.
Then, 
\[
\P\left(
\be_1^\top\big(Z_{\cS}^\top Z_{\cS}\big)^{-1}\big(I_p - \be_1\be_1^\top\big)Z_{\cS}^\top   \beps_{\cS}  > 0 \right)\leq 4e^{-cn} .
\]
where $c>0$ an absolute constant.
\end{lemma}
}

\comment{
\begin{lemma}
\label{lemma:ub_(I-e1e1)ZTZ_e1}
Let $\cS$ be the set containing the indices of the smallest $n-k$ entries of $ \{\eps_i z_{i1}\}_{i\in [n]}$.
Assume $\alpha \coloneqq k/n < 1/2$ and $\gamma \coloneqq p/(n-k)<1/4$
Then, for any $t<1/2-\sqrt{\gamma}$,  
\[
\P\left(\big\|\big(I_p- \be_1\be_1^\top\big)\big(Z_\cS^\top Z_\cS\big)^{-1} \be_1\big\| 
\leq      \frac{1}{n-k}\cdot\frac{2(\sqrt{\gamma}+ t)}
    {1-2(\sqrt{\gamma}+t)} \right)\leq e^{-cn} + 2e^{-2n(1/2-\alpha)^2} + 2e^{-t^2(n-k)/2}.
\]
where $c>0$ an absolute constant.
\end{lemma}
}

\label{appendix:LB}

\subsection{Proof of Theorem \ref{thrm:small_p_large_k}} \label{secA4}

\begin{proof}[Proof of Theorem \ref{thrm:small_p_large_k}]

Similarly to (\ref{g}),
\begin{align}
\widehat \bbeta  - \widehat \bbeta_\cS  = \Sigma^{-1/2}\Big(
\big(Z^\top \spa Z\big)^{-1}Z^\top \beps - 
\big(Z_\cS^\top Z_\cS\big)^{-1}Z_{\cS}^\top \beps_\cS\Big).   \nonumber 
\end{align}
Substituting this expression into (\ref{eq:delta_v}), the definition of $\Delta_k (\bv)$, yields
\begin{align}
\Delta_k (\bv) &= \max_{\cS \subset [n], |\cS| = n-k} \Big \langle 
\big(Z^\top  \spa Z\big)^{-1}Z^\top \beps - 
\big(Z_\cS^\top Z_\cS\big)^{-1}Z_{\cS}^\top \beps_\cS, \Sigma^{-1/2}\bv 
\Big\rangle \nonumber.
\end{align}
Since the Gaussian distribution is rotationally invariant,
\begin{align} 
\Delta_k (\bv) & \stackrel{\,d\,}{=} \big\|\Sigma^{-1/2}\bv\big\| \max_{\cS \subset [n], |\cS| = n-k} \Big \langle 
\big(Z^\top  \spa Z\big)^{-1}Z^\top \beps - 
\big(Z_\cS^\top Z_\cS\big)^{-1}Z_{\cS}^\top \beps_\cS,  \be_1 \Big\rangle . \nonumber
\end{align}
Thus, it suffices to prove (\ref{eq:small_p_large_k}) for 
\begin{align} \label{pdg1x7} 
\widetilde\Delta_k \coloneqq \be_1^\top \big(Z^\top  \spa Z\big)^{-1}Z^\top \beps - \min_{\cS \subset [n], |\cS| = n-k}\be_1^\top \big(Z_\cS^\top Z_\cS\big)^{-1}Z_{\cS}^\top \beps_\cS
\end{align}
under the normalization $\big\|\Sigma^{-1/2}\bv\big\| = 1$.

The first term on the right-hand side of (\ref{pdg1x7}) is 
negligible. Indeed, applying Lemma \ref{lem:bern} as in (\ref{bf123c1sdjkf}) and recalling $\eta\coloneqq \sqrt{2\E |\eps|^2 + \|\eps\|_{\psi_2}^2}$,
\begin{align}
    \P\big(\|\beps\|\geq \eta \sqrt{ n} \,\big)  \leq e^{-cn}\label{Bbern} .
\end{align}
Combining this with Lemma
\ref{lemma:ub_ZTZ_e1}  yields that, for any $t\in \big(0, 1-\sqrt{p/(n-1)}\,\big),$
\begin{align}
        \P\left(\be_1^\top (Z^\top  \spa Z)^{-1}Z^\top \beps\leq - \frac{\eta \hspace{.1em} t }
        {\big(1-t-\sqrt{p/(n-1)}\,\big)^2}\right) \leq e^{-cn} + 2e^{-nt^2/2} . \label{q2u3e}
    \end{align}

Next, we construct a subset $\cS$ such that, with high probability, $-\be_1^\top \big(Z_\cS^\top Z_\cS\big)^{-1}Z_{\cS}^\top \beps_\cS \geq C$, for some constant $C > 0$.
This yields a lower bound on $\widetilde \Delta_k$ since
\begin{align}
\label{eq:lb_deltak}
    \widetilde\Delta_k  \geq  \be_1^\top \big(Z^\top  \spa Z\big)^{-1}Z^\top \beps - \be_1^\top \big(Z_\cS^\top Z_\cS\big)^{-1}Z_{\cS}^\top \beps_\cS.
\end{align}
Specifically, let $\cS$ contain the indices of the smallest $n-k$ values in 
$ \{\eps_i z_{i1}\}_{i\in [n]}$.
By (\ref{qwertyuiop1}),
\begin{align}
\label{eq:id_e1_inv(ZsZs)_Zs_eps}
    \be_1^\top \big(Z_\cS^\top Z_\cS\big)^{-1}Z_{\cS}^\top \beps_\cS\overset{d}{=}  \be_1^\top \big(Z_\cS^\top Z_\cS\big)^{-1} \be_1
    \big( Z_{\cS,1}^\top P \beps_\cS\big),
\end{align}
where $Z_{\cS,1} \coloneqq Z_\cS \be_1$ and $P$ is the orthogonal projection 
onto a subspace drawn uniformly from the
Grassmannian $G_{n-k,n-k-p+1}$, independent of
$(\cS, Z_{\cS,1}, \beps_\cS)$.

We now prove that $ Z_{\cS,1}^\top P \beps_\cS$ is negative with high probability.  
Applying Lemma~\ref{lemma:JL_product}, for any $\delta \in (0,1]$,
\begin{align}
  \P  \bigg(  Z_{\cS,1}^\top P \varepsilon_\cS \geq (1 - \gamma) \Big( Z_{\cS,1}^\top \beps_\cS
+ \delta \|Z_{\cS,1}\|\|\beps_\cS\|\Big) \,\Big|\, Z_{\cS,1}, \beps \bigg) \leq 2 e^{-c(n-k-p+1)\delta^2} . \label{jjjjj123}
\end{align}
Recall $Q_{\alpha, t} := (1-t)\E\big[ \varepsilon z \,\ind(\varepsilon z>q_{1-\alpha+t})\big] - t \| \varepsilon z \|_{\psi_1}$. By Lemma~\ref{lemma:lb_sum_w1}, 
\begin{align}
      \P \bigg( \frac{1}{n} Z_{\cS,1}^\top \beps_\cS  \geq -Q_{\alpha,t}\bigg) \leq  3e^{-cnt^2} + e^{-n (1/2-\alpha)^2 }. \label{jjjjj1234}
\end{align}
Combining (\ref{jjjjj123}) and (\ref{jjjjj1234}) and using $\|\beps_\cS\| \leq \|\beps\|$, we obtain
\begin{align}
  \P  \bigg(  Z_{\cS,1}^\top P \varepsilon_\cS \geq -(1 - \gamma) \Big(n Q_{\alpha,t}
- \delta \|Z_{\cS,1}\|\|\beps\|\Big)  \bigg) \leq 3e^{-cnt^2} + e^{- n(1/2-\alpha)^2 } + 2 e^{-c(n-k-p+1)\delta^2} . \nonumber
\end{align}
By Lemmas \ref{lem:ub_z2} and \ref{lem:lb_z2} and  (\ref{Bbern}), with probability at least $1 - 5e^{-cnt^2} -2 e^{- n(1/2-\alpha)^2 }$,
\begin{align}
    \frac{n Q_{\alpha,t}}{\|Z_{\cS,1}\|^2} - \frac{\delta \|\beps\|}{\|Z_{\cS,1}\|} & \geq  \frac{Q_{\alpha,t}}{1-\alpha + 3t} - 2 \hspace{.05em} \eta \hspace{.05em} \delta\sqrt{\frac{n}{n-k}} \geq \frac{Q_{\alpha,t}}{1-\alpha+3t} - 3 \hspace{.05em} \eta \hspace{.05em} \delta , \nonumber
\end{align}
where the right-hand side of this inequality is strictly positive---see Remark \ref{rem:c_eps}. Using
 (\ref{qwertyuiop}),
\begin{gather} \nonumber
\P\bigg(  
\|Z_{\cS,1}\|^2 \be_1^\top \big(Z_\cS^\top Z_\cS\big)^{-1} \be_1
\leq
\frac{1}{1-\gamma+t} \bigg) \leq  2 e^{-c(n-k-p+1)t^2} .    \nonumber
\end{gather}
From the above bounds and (\ref{eq:id_e1_inv(ZsZs)_Zs_eps}), we conclude that with probability at least
\[
1 - 10e^{-cnt^2} -  3e^{- n(1/2-\alpha)^2 } - 2 e^{-c(n-k-p+1)\delta^2} ,
\]
the following inequality holds:
\begin{align}
    \label{fuck_regression}
    \be_1^\top \big(Z_\cS^\top Z_\cS\big)^{-1}Z_{\cS}^\top \beps_\cS \leq - \frac{1-\gamma}{1-\gamma+t}\bigg(\frac{Q_{\alpha,t}}{1-\alpha+3t} - 3 \hspace{.05em} \eta \hspace{.05em} \delta \bigg) . 
\end{align}
The result follows from (\ref{q2u3e}), (\ref{eq:lb_deltak}), and (\ref{fuck_regression}).

\comment
{
{
\color{red}
By (\ref{qwertyuiop}) and Lemma \ref{lem:ub_z2},
\begin{gather} \nonumber
\P\bigg(  
\be_1^\top \big(Z_\cS^\top Z_\cS\big)^{-1} \be_1
\leq
\Big( \|Z_{\cS,1}\|^2 
(t+1-\gamma +\frac{1}{n-k})
\Big)^{-1} \bigg) \leq  e^{-c(n-k-p+1)t^2} , \\
\P\left(\|Z_{\cS,1}\|^{2} \geq n(1-\alpha+ {3t})  \right) \leq 3e^{- cnt^2}
 .  \nonumber
\end{gather}
Hence, for any $t<1$
\begin{align}
\P\bigg(  
\be_1^\top \big(Z_\cS^\top Z_\cS\big)^{-1} \be_1
\leq
\Big(
n\Big((1-\alpha)(1-\gamma) +7t\Big)+1
\Big)^{-1} \bigg) \leq  e^{-c(n-k-p+1)t^2} + 3e^{-cnt^2} .
\end{align}
Finally, by Lemma \ref{lem:lb_z2}
\[
\P\left(\|Z_{\cS,1}\|^{2} \leq (n-k)/4\right) \leq e^{- cn} + 2e^{-2n(1/2-\alpha)^2}.
\]
Together with (\ref{Bbern}), these bounds imply that with probability at least  $1 - 8e^{-cnt^2} - 3e^{-n(1/2-\alpha)^2}- 2e^{-c\delta^2 (n-k-p+1)}$, 
\begin{align}
     \be_1^\top \big(Z_\cS^\top Z_\cS\big)^{-1} \be_1  \big( Z_{\cS,1}^\top P \beps_\cS\big)
     &\leq -(1-\gamma) \bigg(\frac{Q_{\alpha,t}}{(1-\alpha)(1-\gamma) +7t +1/n} -\frac{\eta  \hspace{.1em}\delta \hspace{.1em} \sqrt{n}}{\sqrt{(n-k)/4}}  \bigg)
     \nonumber
     \\ & \leq
     -\frac{ Q_{\alpha,t}}{1-\alpha+(7t+1/n)(1-\gamma)^{-1}
     } + 3 \eta \delta , \label{eq:Q+delta}
\end{align}
where the second inequality uses $\gamma \in (0,1)$ and $\alpha \in (0, 1/2)$. The result follows from \eqref{pdg1x7}, \eqref{q2u3e}--\eqref{eq:id_e1_inv(ZsZs)_Zs_eps}, and \eqref{eq:Q+delta}.

}

Since $Z_\cS^\top Z_\cS$ is positive definite almost surely, the Cauchy–Schwarz inequality implies
\begin{align*} 
\frac{1}{\|Z_{\cS,1}\|^2} = \bigl(\be_1^\top Z_\cS^\top Z_\cS \be_1\bigr)^{-1} \le \be_1^\top (Z_\cS^\top Z_\cS)^{-1} \be_1.
\end{align*}
By Lemmas \ref{lem:ub_z2} and \ref{lem:lb_z2}, 
\begin{align*}
   & \P\left(\|Z_{\cS,1}\|^{2} \geq n(1-\alpha+ {3t})  \right) \leq 3e^{- cnt^2}, \\
   & \P\left(\|Z_{\cS,1}\|^{2} \leq (n-k)/4\right) \leq e^{- cn} + 2e^{-2n(1/2-\alpha)^2}.
\end{align*}
Together with (\ref{Bbern}), these bounds imply that with probability at least  $1 - 8e^{-cnt^2} - 3e^{-n(1/2-\alpha)^2}- 2^{-c\delta^2 (n-k-p)}$, 
\begin{align}
     \be_1^\top \big(Z_\cS^\top Z_\cS\big)^{-1} \be_1  \big( Z_{\cS,1}^\top P \beps_\cS\big)
     &\leq -(1-\gamma) \bigg(\frac{Q_{\alpha,t}}{1-\alpha+3t} -\frac{\eta  \hspace{.1em}\delta \hspace{.1em} \sqrt{n}}{\sqrt{(n-k)/4}}  \bigg) \nonumber
     \\ & \leq
     -\frac{(1-\gamma) Q_{\alpha,t}}{1-\alpha+3t} + 3 \eta \delta , \label{eq:Q+delta}
\end{align}
where the second inequality uses $\gamma \in (0,1)$ and $\alpha \in (0, 1/2)$. The result follows from \eqref{pdg1x7}, \eqref{q2u3e}--\eqref{eq:id_e1_inv(ZsZs)_Zs_eps}, and \eqref{eq:Q+delta}.
}

\comment{
Next, since $Z_{\cS}^\top Z_{\cS}$ is positive semi-definite, it follows that
\begin{align}
    \be_1^\top\big(Z_{\cS}^\top Z_{\cS}\big)^{-1}\be_1 \geq \big(\be_1^\top Z_{\cS}^\top Z_{\cS}\be_1\big)^{-1} =
    \|\bw_\cS\|^{-2}. \nonumber
\end{align}
Combining this with \eqref{eq:id_e1_inv(ZsZs)_Zs_eps} and \eqref{eq:ub_w_P_eps_2}, yields that with probability at least $1 - 3e^{-cnt^2} - e^{-n(1/2-\alpha)^2}- 2^{-c\delta^2 (n-k-p)}$,
\begin{align}
    \be_1^\top \big(Z_\cS^\top Z_\cS\big)^{-1}Z_{\cS}^\top \beps_\cS 
    &\leq 
    -(1-\gamma)
    \frac{nQ_{\alpha,t}
     + \delta \|\bw_{\cS}\|\,\|\beps\|}{\|\bw_\cS\|^2} 
     = -(1-\gamma) \left(\frac{nQ_{\alpha,t}}{\|\bw_\cS\|^2} - \delta\frac{\|\beps\|}{\|\bw_\cS\|} \right).
\end{align}
By Lemma~\ref{lem:ub_z2} and Lemma \ref{lem:lb_z2}, for any $t\in  \big(0, \alpha\big)$, 
\begin{align}
\label{eq:lb_sum_z^2}
    \P\left(\|\bw_\cS\|^{2} \geq n(1-\alpha+ {3t})  \right) \leq 3e^{- cnt^2}, \quad \P\left(\|\bw_\cS\|^{2} \leq (n-k)/4\right) \leq e^{- cn} + 2e^{-2n(1/2-\alpha)^2}.
\end{align}
In addition, by Lemma \ref{lemma:norm_beps_bound},
$
\P\Big(\|\beps\|\geq \eta\,\sqrt{n}\,\Big) \leq e^{-cn}
$.
Combining the above three inequalities gives that with probability at least $1 - 6e^{-cnt^2} - 3e^{-n(1/2-\alpha)^2}- 2^{-c\delta^2 (n-k-p)}- 2e^{-cn}$
\begin{align}
    \be_1^\top \big(Z_\cS^\top Z_\cS\big)^{-1}Z_{\cS}^\top \beps_\cS 
    &\leq -(1-\gamma) \left(\frac{Q_{\alpha,t}}{1-\alpha+3t} - \delta \frac{\eta\,\sqrt{n}}{\sqrt{(n-k)/4}}  \right)
     \leq
     -\frac{Q_{\alpha,t}(1-\gamma)}{1-\alpha+3t} + 2\delta \frac{\eta}{\sqrt{1-\alpha}} \nonumber,
\end{align}
where the last inequality follows from $\gamma<1$.
Since $\alpha<1/2$, $(1-\alpha)^{-1/2}\leq 3/2$. 
Therefore,
\begin{align}
\label{eq:Q+delta}
    \be_1^\top \big(Z_\cS^\top Z_\cS\big)^{-1}Z_{\cS}^\top \beps_\cS 
     \leq
      -\frac{Q_{\alpha,t}(1-\gamma) }{1-\alpha+3t} + 3\delta \eta  
      .
\end{align}
The theorem follows from  \eqref{eq:ub_ZTZ_e1_2}, \eqref{eq:lb_deltak} and \eqref{eq:Q+delta}.
}


\comment{

Next, by Lemma \ref{lemma:lb_sum_w1}, for any $t\in  \big(0, \alpha\big)$, 
\begin{align}
\label{eq:lb_sum_eps_z}
    \P\left(\frac{1}{n}\be_1^\top Z_{\cS^c}^\top  \beps_{\cS^c} \leq
    (1-t)\E\big[ \varepsilon z \,\ind(\varepsilon z>q_{1-\alpha+t})\big] - t \| \varepsilon z \|_{\psi_1} \right) \leq3e^{-cnt^2} + e^{-n(1/2-\alpha)^2},
\end{align}

Using the equalities $I_p = I_p - \be_1\be_1^\top + \be_1\be_1^\top$ and $Z^\top \beps = Z_{\cS}^\top \beps_\cS + Z_{\cSc}^\top \beps_\cSc$,
implies
\begin{align}
\label{eq:2nd_term_delta}
    &\,\,\,\,\,\,-\be_1^\top \big(Z_\cS^\top Z_\cS\big)^{-1}Z_{\cS}^\top \beps_\cS  \nonumber \\
    &= -\,\be_1^\top \big(Z_\cS^\top Z_\cS\big)^{-1} \be_1 \be_1^\top Z_{\cS}^\top \beps_\cS
    -\be_1^\top \big(Z_\cS^\top Z_\cS\big)^{-1}
    \big( I_p -\be_1 \be_1^\top \big) Z_{\cS}^\top \beps_\cS
    \nonumber\\
    &=  
    \big(\be_1^\top\big(Z_{\cS}^\top Z_{\cS}\big)^{-1}\be_1\big)
    \Big(\be_1^\top Z_{\cS^c}^\top  \beps_{\cS^c} 
    - 
    \be_1^\top Z^\top\beps \Big) 
    -
     \be_1^\top\big(Z_{\cS}^\top Z_{\cS}\big)^{-1}\big(I_p - \be_1\be_1^\top\big)Z_{\cS}^\top   \beps_{\cS}.
\end{align}
We first lower bound the first term of the RHS in the above equation.
By Lemma \ref{lemma:lb_sum_w1}, for any $t\in  \big(0, \alpha\big)$, 
\begin{align}
\label{eq:lb_sum_eps_z}
    \P\left(\frac{1}{n}\be_1^\top Z_{\cS^c}^\top  \beps_{\cS^c} \leq
    (1-t)\E\big[ \varepsilon z \,\ind(\varepsilon z>q_{1-\alpha+t})\big] - t \| \varepsilon z \|_{\psi_1} \right) \leq3e^{-cnt^2} + e^{-n(1/2-\alpha)^2},
\end{align}
where $z \sim \mathcal{N}(0,1)$ is independent of $\varepsilon$ and $q_{1-\alpha+t} $ is the $(1-\alpha+t)-$quantile of $\varepsilon z$.
Next, let $\tilde \beps = \beps/\|\beps\|$.
Note, $\be_1^\top Z^\top\tilde\beps \sim\mathcal{N}(0,1)$.
Hence, by \eqref{Gaussian_norm}, for any $t>0$,
\begin{align}
    \P\big(\be_1^\top Z^\top\tilde\beps >t\sqrt{n}\big) \leq e^{-nt^2/2}.
\end{align}
By Lemma~\ref{lem:bern}, for any $t\in(0,1)$
\begin{align}
\P\left(\|\beps\|\geq  \sqrt{n\big(\E\eps^2 + t\big\|\eps^2\big\|_{\psi_1}\big)}\right) \leq e^{-cnt^2}.
\end{align}
Next, since $Z_{\cS}^\top Z_{\cS}$ is positive semi-definite, it follows that
\begin{align}
\label{eq:ub_e1_omegas_e1}
    \be_1^\top\big(Z_{\cS}^\top Z_{\cS}\big)^{-1}\be_1 \geq \big(\be_1^\top Z_{\cS}^\top Z_{\cS}\be_1\big)^{-1} = \Big(\sum_{i\in \cS} z_{i1}^2\Big)^{-1}.
\end{align}
By Lemma~\ref{lem:ub_z2}, for any $t\in  \big(0, \alpha\big)$, 
\begin{align}
\label{eq:lb_sum_z^2}
    \P\left(\frac{1}{n}\sum_{i\in \cS}  z_{i1}^2 \geq 1-\alpha+ {2t}  \right) \leq 3e^{-cnt^2},
\end{align}
Combining \eqref{eq:lb_sum_eps_z}-\eqref{eq:lb_sum_z^2} implies that for any $t\in(0,\alpha)$, with probability at least $1- 10e^{-cnt^2}-e^{-nt^2/2}- e^{-n(1/2-\alpha)^2}$:
\begin{align}
\label{eq:lb_11}
    &\,\,\,\big(\be_1^\top\big(Z_{\cS}^\top Z_{\cS}\big)^{-1}\be_1\big)
    \Big(\be_1^\top Z_{\cS^c}^\top  \beps_{\cS^c} 
    - 
    \be_1^\top Z^\top\beps \Big) 
      \nonumber\\
    \geq &\,\,\,\,
    \frac{1}{1-\alpha+2t}
    \left(
(1-t)\E\big[ \varepsilon z \,\ind(\varepsilon z>q_{1-\alpha+t})\big] - 
t \Big(\| \varepsilon z \|_{\psi_1}+\sqrt{\E\eps^2 + t\big\|\eps^2\big\|_{\psi_1}} \,\,   \Big)
    \right) \nonumber \\
    \geq &\,\,\,\,
    \frac{(1-t)\E\big[ \varepsilon z \,\ind(\varepsilon z>q_{1-\alpha+t})\big]}{1-\alpha+2t}
 - 
2t \Big(\| \varepsilon z \|_{\psi_1}+\sqrt{\E\eps^2 + t\big\|\eps^2\big\|_{\psi_1}} \,\,   \Big),
\end{align}
where the last inequality follows from $\alpha<1/2$ and $t>0$.
Next, we turn to the lower bound the second term in \eqref{eq:2nd_term_delta}. 
Since $\big(I_p - \be_1\be_1^\top\big)$ is a projection matrix, by the Cauchy-Schwarz inequality
\begin{align}
\label{eq:2nd_term_rhs}    
\be_1^\top\big(Z_{\cS}^\top Z_{\cS}\big)^{-1}\big(I_p - \be_1\be_1^\top\big)Z_{\cS}^\top   \beps_{\cS} 
\leq
\big\|\big(I_p - \be_1\be_1^\top\big)\big(Z_{\cS}^\top Z_{\cS}\big)^{-1}\be_1\big\| \, \big\|\big(I_p - \be_1\be_1^\top\big)Z_{\cS}^\top   \beps_{\cS}\big\|.
\end{align}
Let $\tilde \beps_{\cS} = \beps_\cS/\|\beps_\cS\|$. 
Since $\|\beps_\cS\|\leq \|\beps\|$ it follows that
\[
\be_1^\top\big(Z_{\cS}^\top Z_{\cS}\big)^{-1}\big(I_p - \be_1\be_1^\top\big)Z_{\cS}^\top   \beps_{\cS} 
\leq
\big\|\big(I_p - \be_1\be_1^\top\big)\big(Z_{\cS}^\top Z_{\cS}\big)^{-1}\be_1\big\| \, \big\|\big(I_p - \be_1\be_1^\top\big)Z_{\cS}^\top   \tilde{\beps}_{\cS}\big\| \big\|\beps\big\|.
\]
Since $\cS$ depends only on the first column of $Z$ and $\beps$, it implies that
$\big( I_p - \be_1 \be_1^\top\big)Z_{\cS}^\top$  and $\beps_{\cS}$ are independent.
Therefore,
\[
\big( I_p - \be_1 \be_1^\top\big)Z_{\cS}^\top \tilde{\beps}_{\cS} \sim \mathcal{N}(0, I_{p-1}).
\]
Combining Lemma~\ref{lem:lipschitz} and Lemma~\ref{lem:bern}, 
for any $t\in(0,1)$
\begin{align}
\label{eq:Zeps0_ub}
\P\bigg(
    \big\|\big( I_p - \be_1 \be_1^\top\big)
    Z_{\cS}^\top \Tilde{\beps}_{\cS}\big\| \cdot \big\|\beps\big\|   
\geq
(1-\alpha)^{-1/2}({n-k})\big(\sqrt{\gamma}+t\big)\sqrt{\E \eps^2 + t\big\|\eps^2\big\|_{\psi_1}} \bigg) \leq 2e^{-cnt^2}.
\end{align}
Next, by Lemma~\ref{lemma:ub_ZTZ_e1}, for any $t<1/2-\sqrt{\gamma}$
\begin{align}
\label{eq:ub_ZTZ_e1_2}
    \P\bigg(\big\|\big( I_p - \be_1 \be_1^\top\big)\big(Z_{\cS}^\top Z_{\cS}\big)^{-1}\be_1\big\| \geq \frac{1}{n-k} \cdot\frac{2(\sqrt{\gamma}+t)}{1-2(\sqrt \gamma+t)}\bigg) \leq 5e^{-cnt^2}.
\end{align}
Combining \eqref{eq:2nd_term_rhs}-\eqref{eq:ub_ZTZ_e1_2} gives, for any $t <1/2-\sqrt{\gamma}$
\begin{align}
    \P\left(\be_1^\top\big(Z_{\cS}^\top Z_{\cS}\big)^{-1}\big(I_p - \be_1\be_1^\top\big)Z_{\cS^c}^\top   \beps_{\cS^c}
    \geq
\frac{2(1-\alpha)^{-1/2}(\sqrt{\gamma} + t)^2}{1-2(\sqrt{\gamma} + t)}
\sqrt{\E\eps^2 + t\big\|\eps^2\big\|_{\psi_1}}
    \right) \leq 2e^{-cnt^2} + 5 e^{-cn}. \nonumber
\end{align}
Combining this with \eqref{eq:lb_deltak}, \eqref{eq:ub_ZTZ_e1_2} \eqref{eq:2nd_term_delta} and \eqref{eq:lb_11} concludes the result.
}

\comment{
By the Cauchy-Schwarz inequality it follows that
\begin{align}
\label{eq:lb_Delta0}
    \widetilde \Delta_k (\bv)
    &\geq 
    \be_1^\top\big(Z_{\cS}^\top Z_{\cS}\big)^{-1}Z_{\cS^c}^\top \beps_{\cS^c}  - \big\|\big(\big(Z^\top \spa Z\big)^{-1} -\big(Z_\cS^\top Z_\cS\big)^{-1} \big)\be_1 \big\|\, \big\|Z^\top \beps\big\| \nonumber\\
    &\geq 
    \be_1^\top\big(Z_{\cS}^\top Z_{\cS}\big)^{-1}Z_{\cS^c}^\top \beps_{\cS^c}  - \big(\sigma_p^{-2}(Z) + \big\|\big(Z_\cS^\top \spa Z_\cS\big)^{-1} \be_1 \big\|\
    \big)\big\|Z^\top \beps\big\|.
\end{align}

\noindent We prove that there exists a subset $\cS$ such that with high probability, the first term in (\ref{eq:lb_Delta0}) is bounded away from zero and dominates the  second term.
Concretely, let $\cS$ be the set containing the indices of the minimum $n-k$ entries of 
$ \{\eps_i z_{i1}\}_{i\in [n]}$.

We start by lower bounding the first term in  \eqref{eq:lb_Delta0}. 
Using the decomposition $I_p = I_p - \be_1\be_1^\top + \be_1\be_1^\top$, it follows that
\begin{align}
    \be_1^\top\big(Z_{\cS}^\top Z_{\cS}\big)^{-1}Z_{\cS^c}^\top \beps_{\cS^c}
    &=
    \Big(\be_1^\top\big(Z_{\cS}^\top Z_{\cS}\big)^{-1}\be_1\Big)\be_1^\top Z_{\cS^c}^\top  \beps_{\cS^c}
    +
    \be_1^\top\big(Z_{\cS}^\top Z_{\cS}\big)^{-1}\big(I_p - \be_1\be_1^\top\big)Z_{\cS^c}^\top   \beps_{\cS^c}. \nonumber
\end{align}
Let $(z_{11},z_{21},\ldots z_{n1})^\top$ be the first column of $Z$. 
Thus, the above can be written as
\begin{align}
    \label{eq:rhs_lb}
    \be_1^\top\big(Z_{\cS}^\top Z_{\cS}\big)^{-1}Z_{\cS^c}^\top \beps_{\cS^c}
    &=
    \Big(\be_1^\top\big(Z_{\cS}^\top Z_{\cS}\big)^{-1}\be_1\Big)\sum_{i\in \cS^c} \varepsilon_i z_{i1}
    +
    \be_1^\top\big(Z_{\cS}^\top Z_{\cS}\big)^{-1}\big(I_p - \be_1\be_1^\top\big)Z_{\cS^c}^\top   \beps_{\cS^c}.
\end{align}
By Lemma \ref{lemma:lb_sum_w1}, for any $t\in  \big(0, \alpha\big)$, 
\begin{align}
\label{eq:lb_sum_eps_z}
    \P\left(\frac{1}{n}\sum_{i\in \cS^c} \varepsilon_i z_{i1} \leq
    (1-t)\E\big[ \varepsilon z \,\ind(\varepsilon z>q_{1-\alpha+t})\big] - t \| \varepsilon z \|_{\psi_1} \right) \leq3e^{-cnt^2} + e^{-n(1/2-\alpha)^2},
\end{align}
where $z \sim \mathcal{N}(0,1)$ is independent of $\varepsilon$ and $q_{1-\alpha+t} $ is the $(1-\alpha+t)-$quantile of $\varepsilon z$.
Next, since $Z_{\cS}^\top Z_{\cS}$ is positive semi-definite, it follows that
\begin{align}
\label{eq:ub_e1_omegas_e1}
    \be_1^\top\big(Z_{\cS}^\top Z_{\cS}\big)^{-1}\be_1 \geq \big(\be_1^\top Z_{\cS}^\top Z_{\cS}\be_1\big)^{-1} = \Big(\sum_{i\in \cS} z_{i1}^2\Big)^{-1}.
\end{align}
By Lemma~\ref{lem:ub_z2}, for any $t\in  \big(0, \alpha\big)$, 
\begin{align}
\label{eq:lb_sum_z^2}
    \P\left(\frac{1}{n-k}\sum_{i\in \cS}  z_{i1}^2 \geq 1+ \frac{3t}{1-\alpha}  \right) \leq 3e^{-cnt^2},
\end{align}
Combining this with \eqref{eq:lb_1st} and \eqref{eq:lb_sum_z^2} gives
\begin{align}
\label{eq:lb_1st}
     &\P\left(\Big(\be_1^\top\big(Z_{\cS}^\top Z_{\cS}\big)^{-1}\be_1\Big)\sum_{i\in \cS^c} \varepsilon_i z_{i1}
     \leq
     \frac{n}{n-k}\frac{ (1-t)\E\big[ \varepsilon z \,\ind(\varepsilon z>q_{1-\alpha+t})\big] - t \| \varepsilon z \|_{\psi_1}}{1+ \frac{3t}{1-\alpha}}
     \right) \nonumber \\
     = &
    \,\,\,
    \P\left(\Big(\be_1^\top\big(Z_{\cS}^\top Z_{\cS}\big)^{-1}\be_1\Big)\sum_{i\in \cS^c} \varepsilon_i z_{i1}
     \leq
     \frac{ (1-t)\E\big[ \varepsilon z \,\ind(\varepsilon z>q_{1-\alpha+t})\big] - t \| \varepsilon z \|_{\psi_1}}{1-\alpha+ 3t}
     \right)  \nonumber \\
     \leq &\,\,\, 6e^{-cnt^2} + e^{-n(1/2-\alpha)^2}.
\end{align}

Next, we turn to the lower bound of the second term in \eqref{eq:rhs_lb}. 
Since $\big(I_p - \be_1\be_1^\top\big)$ is a projection matrix, by the Cauchy-Schwarz inequality
\begin{align}
\label{eq:2nd_term_rhs}    
\be_1^\top\big(Z_{\cS}^\top Z_{\cS}\big)^{-1}\big(I_p - \be_1\be_1^\top\big)Z_{\cS^c}^\top   \beps_{\cS^c} \geq
-\big\|\big(I_p - \be_1\be_1^\top\big)\big(Z_{\cS}^\top Z_{\cS}\big)^{-1}\be_1\big\| \, \big\|\big(I_p - \be_1\be_1^\top\big)Z_{\cS^c}^\top   \beps_{\cS^c}\big\|.
\end{align}
Let $\tilde \beps_{\cSc} = \beps_\cSc/\|\beps_\cSc\|$. 
Since $\|\beps_\cSc\|\leq \|\beps\|$ it follows that
\[
\be_1^\top\big(Z_{\cS}^\top Z_{\cS}\big)^{-1}\big(I_p - \be_1\be_1^\top\big)Z_{\cS^c}^\top   \beps_{\cS^c} 
\geq
-\big\|\big(I_p - \be_1\be_1^\top\big)\big(Z_{\cS}^\top Z_{\cS}\big)^{-1}\be_1\big\| \, \big\|\big(I_p - \be_1\be_1^\top\big)Z_{\cS^c}^\top   \tilde{\beps}_{\cS^c}\big\| \big\|\beps\big\|.
\]
Since $\cS$ depends only on the first column of $Z$ and $\beps$, it implies that
$\big( I_p - \be_1 \be_1^\top\big)Z_{\cSc}^\top$  and $\beps_{\cSc}$ are independent.
Therefore,
\[
\big( I_p - \be_1 \be_1^\top\big)Z_{\cSc}^\top \tilde{\beps}_{\cSc} \sim \mathcal{N}(0, I_{p-1}).
\]
Combining Lemma~\ref{lem:lipschitz} and Lemma~\ref{lem:bern}, 
for any $t\in(0,1)$
\begin{align}
\label{eq:Zeps0_ub}
\P\bigg(
    \big\|\big( I_p - \be_1 \be_1^\top\big)
    Z_{\cSc}^\top \Tilde{\beps}_{\cSc}\big\| \cdot \big\|\beps\big\|   
\geq
(1-\alpha)^{-1/2}({n-k})\big(\sqrt{\gamma}+t\big)\sqrt{\E \eps^2 + t} \bigg) \leq 2e^{-cnt^2}.
\end{align}
By Lemma~\ref{lemma:ub_ZTZ_e1},
\begin{align}
\label{eq:ub_ZTZ_e1_2}
    \P\Big(\big\|\big( I_p - \be_1 \be_1^\top\big)\big(Z_{\cS}^\top Z_{\cS}\big)^{-1}\be_1\big\| \geq \frac{8}{n-k} \Big) \leq 5e^{-cn}.
\end{align}
Combining \eqref{eq:2nd_term_rhs}-\eqref{eq:ub_ZTZ_e1_2} gives
\begin{align}
    \P\left(\be_1^\top\big(Z_{\cS}^\top Z_{\cS}\big)^{-1}\big(I_p - \be_1\be_1^\top\big)Z_{\cS^c}^\top   \beps_{\cS^c}
    \geq
    8(1-\alpha)^{-1/2}(\sqrt{\gamma} + t)\sqrt{\E\eps^2 + t\|\eps\|_{\psi_1}}
    \right) \leq 2e^{-cnt^2} + 5 e^{-cn}.
\end{align}
Combining this with \eqref{eq:rhs_lb} and \eqref{eq:lb_1st} gives
\begin{align}
\label{eq:lb_1st_rhs}
    &\P\left(
    \be_1^\top\big(Z_{\cS}^\top Z_{\cS}\big)^{-1}Z_{\cS^c}^\top \beps_{\cS^c} 
    \leq
    \frac{(1-t)\E\big[ \varepsilon z \,\ind(\varepsilon z>q_{1-\alpha+t})\big] - t \| \varepsilon z \|_{\psi_1}
    }{1-\alpha+3t} - 
    \frac{8(\sqrt{\gamma}+t)\sqrt{\E\eps^2 +t\|\eps\|_{\psi_1}}}
    {\sqrt{1-\alpha}}
    \right) \nonumber \\
     \leq & \quad 8e^{-cnt^2} + 5e^{-cn} + e^{-n(1/2-\alpha)^2}.
\end{align}

Next, we now bound the second term of \eqref{eq:lb_Delta0}.
By the triangle and Cauchy-Schwarz inequalities
\begin{align}
\label{eq:ub_2nd}
&\,\,\,\,\,\big(\sigma_p^{-2}(Z) + \big\|\big(Z_\cS^\top \spa Z_\cS\big)^{-1} \be_1 \big\|\
    \big)\big\|Z^\top \beps\big\|
    \nonumber \\
    \leq 
    &\,\,\,\,
    \Big(\sigma_p^{-2}(Z) + 
    \big\|(I_p - \be_1\be_1^\top)\big(Z_\cS^\top \spa Z_\cS\big)^{-1} \be_1 \big\|\ + \be_1^\top\big(Z_{\cS}^\top Z_{\cS}\big)^{-1}\be_1
    \Big)\big\|Z^\top \beps\big\|
    .    
\end{align}
From \eqref{eq:ub_e1_omegas_e1} and \eqref{eq:lb_sum_z^2}
\begin{align}
\label{eq:lb_sum_z^2_2}
    \P\left(\be_1^\top\big(Z_{\cS}^\top Z_{\cS}\big)^{-1}\be_1 \geq \frac{1}{n-k}\left(1+ \frac{3t}{1-\alpha}\right)  \right) \leq 3e^{-cnt^2}.
\end{align}
From \eqref{eq:ub_ZTZ_e1_2}
\begin{align}
    \P\left(\big\|\big( I_p - \be_1 \be_1^\top\big)\big(Z_{\cS}^\top Z_{\cS}\big)^{-1}\be_1\big\| \geq \frac{8}{n-k} \right) \leq 5e^{-cn}.
\end{align}
From \ref{Lem:maxeig2}
\begin{equation*} 
 \begin{aligned} 
 \P \Big(\sigma_p(Z) \leq \sqrt{n} - \sqrt{p} - t\sqrt{n} \Big) \leq e^{-nt^2/2} .
 \end{aligned}    
 \end{equation*}
 Finally, combining Lemmas \ref{lem:bern} and \ref{lem:lipschitz} gives
\begin{align}
\label{eq:Zeps_ub2}
\P\bigg(
    \big\|Z^\top {\beps}\big\|
\geq
n\big(\sqrt{p/n}+t\big)\sqrt{\E \eps^2 + t\|\eps\|_{\psi_1}} \bigg) \leq 2e^{-cnt^2}.
\end{align}
Hence, combining \eqref{eq:ub_2nd}-\eqref{eq:Zeps_ub2} gives that with probability at least $1-3e^{-nt^2/2}-3e^{-cnt^2}- 5e^{-cn}$
\begin{align}
&\,\,\,\,\,\,\big(\sigma_p^{-2}(Z) + \big\|\big(Z_\cS^\top \spa Z_\cS\big)^{-1} \be_1 \big\|\
    \big)\big\|Z^\top \beps\big\| \nonumber \\
    \leq  &
    \,\,\,\,
    \left(\sqrt{p/n} + t\right) \sqrt{\E\eps^2 + t\|\eps\|_{\psi_1}}
    \left(\frac{1}{(1-\sqrt{p/n}-t)^2} + \frac{9}{1-\alpha} + \frac{3t}{(1-\alpha)^2}\right) \nonumber.
\end{align}
Since $\alpha<1/2$ and $p/(n-k)\leq 1/9$, for any $t\leq \alpha$
\[
\big(\sigma_p^{-2}(Z) + \big\|\big(Z_\cS^\top \spa Z_\cS\big)^{-1} \be_1 \big\|\
    \big)\big\|Z^\top \beps\big\| \leq
   66 \left(\sqrt{p/n} + t\right) \sqrt{\E\eps^2 + t\|\eps\|_{\psi_1}}
\]
Combining this with \eqref{eq:lb_Delta0} and \eqref{eq:lb_1st_rhs} gives that for any $t<\alpha$, with probability at least 
$1- 5e^{-cn} - e^{-n(1/2-\alpha)^2} - 11e^{-cnt^2}$
\begin{align}
   \widetilde \Delta_k (\be_1) \geq
   \frac{(1-t)\E\big[ \varepsilon z \,\ind(\varepsilon z>q_{1-\alpha+t})\big] - t \| \varepsilon z \|_{\psi_1}
    }{1-\alpha+3t} - 
    -  82 \left(\sqrt{p/n} + t\right) \sqrt{\E\eps^2 + t\|\eps\|_{\psi_1}}.
\end{align}

}


\comment{
Next, we consider the second term of \eqref{eq:lb_Delta2}. Define the matrix \[A \coloneqq \frac{1}{n} \Omega Z_{\cS^c}^\top Z_{\cS^c} \big(Z_{\cS}^\top Z_{\cS}\big)^{-1} , \]
which is symmetric and positive semi-definite by Lemma \ref{lemma:psd}. We will use the decomposition
\begin{align}
    \label{eq:decompose}
    \be_1^\top A Z_{\cS}^\top \beps_{\cS} = 
     \be_1^\top A
    \be_1 \be_1^\top 
    Z_{\cS}^\top \beps_{\cS}  +
    \be_1^\top A
    \big( I_p - \be_1 \be_1^\top\big)
    Z_{\cS}^\top \beps_{\cS}. 
\end{align}

By Lemma \ref{lemma:lb_sum_w1},  for any $t>0$ such that $t\|\beps z\|_{\psi_1} + \E\big[ \beps z \ind(\beps z\leq q_{1-\alpha+t})\big] <0$,
\[
\P(\be_1^\top 
    Z_{\cS}^\top \beps_{\cS}\geq 0) \leq 4e^{-cnt^2},
\]
Combining the above and $A_{\cS}\succeq 0 $ (see Lemma \ref{lemma:psd}), with probability at least $1-4e^{-cnt^2}$,
\eqref{eq:decompose} is lower bounded by
\begin{align}
    \left\langle A_{\cS} Z_{\cS}^\top \beps_{\cS}, \be_1  \right \rangle  
    &\leq 
    \be_1  ^\top A_{\cS}
    \big( I_p - \be_1 \be_1^\top\big)
    Z_{\cS}^\top \beps_{\cS} \nonumber \leq 
    \|A_{\cS}\|\cdot \big\|\big( I_p - \be_1 \be_1^\top\big)
    Z_{\cS}^\top \beps_{\cS}\big\| \nonumber.
\end{align}

Let $\tilde{\beps}_{\cS} = {\beps}_{\cS}/\|{\beps}_{\cS}\|$.
Then,
\begin{align}
    \left\langle A_{\cS} Z_{\cS}^\top \beps_{\cS}, \be_1  \right \rangle  
    &\leq 
    \|A_{\cS}\|\cdot \big\|\big( I_p - \be_1 \be_1^\top\big)
    Z_{\cS}^\top \tilde\beps_{\cS}\big\| \big\|\beps_{\cS}\big\|\nonumber \\
    &\leq
    \|A_{\cS}\|\cdot \big\|\big( I_p - \be_1 \be_1^\top\big)
    Z_{\cS}^\top \tilde\beps_{\cS}\big\| \big\|\beps\big\| \nonumber \\
    &\leq \sigma^{-2}_p(Z_{\cS})\big\|\big( I_p - \be_1 \be_1^\top\big)
    Z_{\cS}^\top \tilde\beps_{\cS}\big\| \big\|\beps\big\|,
\end{align}
where the last inequality follows from $\|A_{\cS}\| \leq\sigma^{-2}_p(Z_{\cS})$.
By  Lemma \ref{lem:lb_sigma_p} it follows that, for any  $t \in (0, 1-\sqrt{\gamma} )$
\begin{align}\label{eq:lb_sigma_p_S0}
    \P\bigg( \frac{\sigma_p(Z_{\cS})}{\sqrt{n-k}} \leq C-\sqrt{\gamma} - t\bigg)    \leq e^{-(n-k)t^2/2}
    .
\end{align}
Finally, although $Z_{\cS}^\top$ and $\beps_{\cS}$ are not independent,  
$\big( I_p - \be_1 \be_1^\top\big)Z_{\cS}^\top$  and $\beps_{\cS}$ are 
.
Thus
$\big( I_p - \be_1 \be_1^\top\big)Z_{\cS}^\top \tilde{\beps}_{\cS} \sim \mathcal{N}(0, I_{p-1})$.
Hence, using basic concentration inequality for standard Gaussian and Lemma \ref{lem:bern}, 
for any $t\in(0,1)$
\begin{align}
\label{eq:Zeps0_ub}
\P\bigg(
    \frac{1}{n-k}\big\|\big( I_p - \be_1 \be_1^\top\big)
    Z_{\cS}^\top \Tilde{\beps}_{\cS}\big\| \cdot \big\|\beps\big\|   
\geq
(1-\alpha)^{-1/2}\big(\sqrt{\gamma_\cS}+t\big)\sqrt{\E \eps^2 + t} \bigg) \leq 2e^{-c(n-k)t^2}.
\end{align}

}
\comment{\color{blue}
Next we upper bound  the third term of  (\ref{eq:lb_Delta}), which does not depend on the chosen set.  
By  (\ref{eq:eigbound_2sided}) and Lemma \ref{Lem:maxeig2},
\[
\P\bigg(\big\|\Omega - I_p\big\|  \geq  \frac{\sqrt{p/n} + t}{1 - \sqrt{p/n} - t}\bigg) \leq 2e^{-nt^2/2} ,
\]
where $t$ satisfies the assumptions of the theorem. 
Further, since $t<1/2$ and $p/n < \gamma<1/4 $, it follows that
\[
({1 - \sqrt{p/n} - t})^{-1} \leq  ({1 - \sqrt{\gamma} - t})^{-1} \leq ({1/2 - t})^{-1} \leq ({1/2 - \alpha})^{-1}.
\]
Hence, 
\[
\P\bigg(\big\|\Omega - I_p\big\|  \geq  ({1/2 - \alpha})^{-1}\big({\sqrt{p/n} + t}\big)\bigg) \leq 2e^{-nt^2/2} ,
\]

Arguing as in (\ref{eq8124-1})--(\ref{eqa8e}), we obtain
\[
    \P\bigg(\frac{1}{n} \sigma_1(Z) \| \beps \|  \geq  C  \|\varepsilon\|_{\psi_2} \bigg) \leq 2 e^{-cn} .
\]
Combining the above two bounds implies 
\begin{align}
\label{eq:ub_3rd}
     \P\bigg(\frac{1}{n} \sigma_1(Z)   \big\|\Omega - I_p\big\| \| \beps \|  \geq  C \|\varepsilon\|_{\psi_2} (\sqrt{p/n} + t) \bigg)     \leq    4e^{-c nt^2} . 
\end{align}
}
\end{proof}

\subsection{Proofs of Lemmas \ref{lemma:ub_ZTZ_e1}--\ref{lem:lb_z2}}
\label{APBLEMM}
\begin{proof}[Proof of Lemma  \ref{lemma:ub_ZTZ_e1}]
By the rotational invariance of the Gaussian distribution,
\begin{align}
\be_1^\top (Z^\top Z)^{-1}Z^\top \bv 
\stackrel{d}{=}
\be_1^\top (Z^\top Z)^{-1}Z^\top \be_1
=
\be_1^\top (Z^\top Z)^{-1}\bz_1, \label{qwdq61}
\end{align}
where $\bz_1$ is the first row of $Z$.
Let $Z_{-1} \in \mathbb{R}^{(n-1) \times p}$ be the matrix obtained from $Z$ by removing $\bz_1$, and set $S_{-1} \coloneqq Z_{-1}^\top Z_{-1}$. 
Since $S_{-1}$ is almost surely positive definite, the Sherman--Morrison formula gives
    \begin{align}
     & (Z^\top  \spa Z)^{-1} = S_{-1}^{-1} - \frac{S_{-1}^{-1}\bz_1\bz_1^\top S_{-1}^{-1}}{1+\bz_1^\top S_{-1}^{-1}\bz_1} , &&
        \be_1^\top (Z^\top  \spa Z)^{-1}\bz_1 
        =
        \frac{\be_1^\top 
        S_{-1}^{-1}\bz_1}{1+\bz_1^\top 
        S_{-1}^{-1}\bz_1}
          . \label{c15asdfq}
       \end{align}

\noindent Observe that $\bz_1 \mapsto \be_1^\top S_{-1}^{-1}\bz_1$ is $\sigma_{p}^{-2}(Z_{-1})$-Lipschitz. Hence,  conditioning on $Z_{-1}$ and applying Lemma~\ref{lem:lipschitz}, for
all $t\ge 0$,
\begin{align*}
    \P\bigg( \be_1^\top S_{-1}^{-1} \bz_1 \geq \E \Big[  \be_1^\top S_{-1}^{-1} \bz_1 \, \big| \, Z_{-1}\Big] + \sigma_p^{-2}(Z_{-1})  \hspace{.1em} t \, \Big| \, Z_{-1} \bigg) \leq e^{-t^2/2} . 
\end{align*}
Since $\E \big[  \be_1^\top S_{-1}^{-1} \bz_1 \, \big| \, Z_{-1}\big] =0$, this implies
\begin{align}
    \P\Big( \be_1^\top S_{-1}^{-1} \bz_1 \geq \sigma_p^{-2}(Z_{-1})  \hspace{.1em} t \Big) =  \P\Big( \be_1^\top S_{-1}^{-1} \bz_1 \leq -\sigma_p^{-2}(Z_{-1})  \hspace{.1em} t \Big) \leq e^{-t^2/2} . \label{c17aqwg}
\end{align}
Combining  (\ref{c15asdfq}) and (\ref{c17aqwg}), we obtain
\begin{align}
  &~  \P\Big( \be_1^\top (Z^\top  \spa Z)^{-1} \bz_1 \geq -\sigma_p^{-2}(Z_{-1})  \hspace{.1em} t \Big) \geq  \P\Big(  \be_1^\top S_{-1}^{-1} \bz_1  \geq -\sigma_p^{-2}(Z_{-1})  \hspace{.1em} t  \Big)  \geq 1 - e^{-t^2/2}. \label{qwdq62}
\end{align}
The result follows from (\ref{qwdq61}), (\ref{qwdq62}), and Lemma  \ref{Lem:maxeig2}.

\comment{
By Lemma \ref{Lem:maxeig2}, for $t \in (0,\sqrt{p/(n-1)})$,
\begin{align}
    \P\bigg( \be_1^\top S_{-1}^{-1} \bz_1 \geq \frac{t}{\sqrt{n-1}(1-t-\sqrt{p/(n-1)})^2} \bigg) \leq 2 e^{-(n-1)t^2/2}. 
\end{align}
    For any $\delta > 0$, since $1+\bz_1^\top  S_{-1}^{-1}\bz_1\geq 1$ almost surely,
    \begin{align}
    \label{eq:ub_P}
            \P\Big( \be_1^\top (Z^\top  \spa Z)^{-1}\bz_1\geq -\delta\Big)
            \geq \P\big(\be_1^\top 
        S_{-1}^{-1}\bz_1\geq -\delta
        \big) = 
        \P\big(\be_1^\top 
        S_{-1}^{-1}\bz_1\leq \delta
        \big).
    \end{align}

Thus,
\[
\P\left( \be_1^\top (Z^\top  \spa Z)^{-1}\bz_1\geq -\frac{t}{\sqrt{n-1}(1-t-\sqrt{p/(n-1)}\,)^2}\right) \geq 1 - 2 e^{-(n-1)t^2/2}.
\]}
\end{proof}

\begin{proof}[Proof of Lemma~\ref{lemma:id_e1_inv(ZsZs)_Zs_eps}]
Write $
Z_\cS = \begin{bmatrix} Z_{\cS,1} & Z_{\cS,-1}\end{bmatrix} $, where $Z_{\cS,1} \coloneqq Z_{\cS} \be_1  \in \mathbb{R}^{n-k}$ is the first column of $Z_\cS$ and $Z_{\cS,-1} \in \mathbb{R}^{(n-k) \times (p-1)}$ contains the remaining columns. Let $P_\cS$ denote the projection onto the orthogonal complement of the column space of $Z_{\cS,-1}$:
\[
P_\cS := I_{n-k} - Z_{\cS,-1} \big(Z_{\cS,-1}^\top Z_{\cS,-1}\big)^{-1}Z_{\cS,-1}^\top .
\]
Consider the block
decomposition
\[
Z_\cS^\top Z_\cS
=
\begin{pmatrix}
Z_{\cS,1}^\top Z_{\cS,1} & Z_{\cS,1}^\top Z_{\cS,-1}\\
Z_{\cS,-1}^\top Z_{\cS,1} & Z_{\cS,-1}^\top Z_{\cS,-1}
\end{pmatrix}.
\]
By the block matrix inverse formula, the first row of $(Z_\cS^\top Z_\cS)^{-1}$ equals
\begin{align*}
\be_1^\top (Z_\cS^\top Z_\cS)^{-1}
&=
\be_1^\top\big( Z_{\cS}^\top Z_{\cS} \big)^{-1} \be_1 \begin{bmatrix} 1 & -Z_{\cS,1}^\top Z_{\cS,-1} \big(Z_{\cS,-1}^\top Z_{\cS,-1}\big)^{-1} \end{bmatrix}  
\\
& =
\big( Z_{\cS,1}^\top P_\cS Z_{\cS,1} \big)^{-1}  \begin{bmatrix} 1 & -Z_{\cS,1}^\top Z_{\cS,-1} \big(Z_{\cS,-1}^\top Z_{\cS,-1}\big)^{-1} \end{bmatrix} . 
\end{align*}
Multiplying by $Z_\cS^\top \beps_\cS =  \begin{bmatrix} Z_{\cS,1}^\top \beps_\cS &  Z_{\cS,-1}^\top \beps_\cS \end{bmatrix}$, we obtain 
\begin{align}
\be_1^\top (Z_\cS^\top Z_\cS)^{-1}Z_\cS^\top \beps_\cS
=
\be_1^\top\big( Z_{\cS}^\top Z_{\cS} \big)^{-1} \be_1 Z_{\cS,1}^\top P_\cS \beps_\cS . 
\label{boaqd}\end{align}
Since $\cS$ depends only on
$\{\eps_i z_{i1}\}_{i\in[n]}$,  $Z_{\cS,-1}$ and $P_\cS$ are independent
of $(\cS,Z_{\cS,1},\beps_\cS)$. Writing the singular value decomposition of $Z_{\cS,-1}$ as $Z_{\cS,-1}=U\Sigma V^\top$, where $U \in \mathbb{R}^{(n-k) \times (p-1)}$, we have $P_\cS = I_{n-k} - UU^\top$, which is the orthogonal projector onto a
subspace  drawn uniformly from
$G_{n-k,n-k-p+1}$ by Gaussian rotational invariance. 

Using the independence of $Z_{\cS,1}$ and $P_\cS$ and the identity \[ \frac{1}{\be_1^\top\big( Z_{\cS}^\top Z_{\cS} \big)^{-1}  \be_1} =  Z_{\cS,1}^\top P_\cS Z_{\cS,1} = \big\|P_\cS Z_{\cS,1} \big\|^2, \]     Lemma 5.3.2 of \cite{Vershynin12}
implies that for $t > 0$,
\begin{align}\nonumber 
\P\bigg( \bigg| \Big( \|Z_{\cS} \be_1\|^2\be_1^\top \big(Z_\cS^\top Z_\cS\big)^{-1} \be_1\Big)^{-1/2} - \sqrt{1-\gamma}\bigg| \geq t \sqrt{1-\gamma} \hspace{.1em}\bigg) \leq 2 e^{-c(n-k-p+1)t^2} . 
\end{align}
Equivalently,
\begin{align}\nonumber 
\P\bigg( \bigg|
\Big(\|Z_{\cS}\be_1\|^2\,\be_1^\top (Z_\cS^\top Z_\cS)^{-1}\be_1\Big)^{-1}
-
(1 - \gamma)
\bigg|
\geq
\big(2t+t^2\big)(1-\gamma) \bigg) \leq 2 e^{-c(n-k-p+1)t^2} . 
\end{align}
This implies 
  (\ref{qwertyuiop}) since for $t \in (0,1)$, $\big(2t+t^2\big)(1- \gamma) \leq 3 t $.

\end{proof}

\begin{proof}[Proof of Lemma \ref{lemma:JL_product}]
    By Lemma 5.3.2 in  \cite{Vershynin12}, the following bounds hold individually  with probability at least $1 - e^{-cp\delta^2}$:
        \begin{align*}
   &  \|P(\bv+\bw)\|\leq \Big(1+\frac{\delta}{2}\Big) \sqrt{\frac{p}{n}}\|\bv+\bw\|,  & &\|P(\bv-\bw)\|\geq  \Big(1-\frac{\delta}{2}\Big)\sqrt{\frac{p}{n}}\|\bv-\bw\| .
    \end{align*}
Thus,  with probability at least $1 - 2e^{-cp\delta^2}$,
        \begin{align}
        \bv^\top P \bw & = \frac{1}{4} \left( \|P(\bv+\bw)\|^2 - \|P(\bv-\bw)\|^2 \right) \nonumber\\
        &  \leq  \frac{p}{4n}\left(\Big(1+\frac{\delta}{2}\Big)^2\|\bv+\bw\|^2 - \Big(1-\frac{\delta}{2}\Big)^2\|\bv-\bw\|^2 \right) \nonumber \\
        &= \frac{p}{n}\left(\Big(1+\frac{\delta^2}{4}\Big)\bv^\top \bw +\frac{\delta}{2}\big(\|\bv\|^2+\|\bw\|^2\big)\right) \nonumber 
        \leq  \frac{p}{n} \left( \bv^\top \bw  +\delta\right) \nonumber.
    \end{align}
    Here, the first equality  uses  $P^\top \spa P = P$.
\end{proof}

The proof of Lemma~\ref{lemma:lb_sum_w1} uses the following Chernoff
bound for sums of Bernoulli random variables:
\begin{lemma}[\citet{chernoff_1952}]
\label{lemma:chernoff}
     Let $x_1, \ldots, x_n$ be i.i.d.\ Bernoulli(q) random variables.
     Then, 
    for any $t \in (0,1]$,
     \begin{align*}
         \P\left( \frac{1}{n}\sum_{i=1}^n x_i \leq (1-t)
         q\right) \leq e^{- n q  t^2 /2}.
     \end{align*}
\end{lemma}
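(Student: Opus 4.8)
The plan is to apply the standard Chernoff (exponential Markov) argument. Write $S_n \coloneqq \sum_{i=1}^n x_i$, so we must bound $\P(S_n \leq (1-t)nq)$. First I would observe that for any $\lambda > 0$,
\[
\P\big(S_n \leq (1-t)nq\big) = \P\big(e^{-\lambda S_n} \geq e^{-\lambda(1-t)nq}\big) \leq e^{\lambda(1-t)nq}\,\E\big[e^{-\lambda S_n}\big],
\]
by Markov's inequality. Since the $x_i$ are i.i.d.\ Bernoulli$(q)$, the moment generating function factorizes, and using $1+u \leq e^u$,
\[
\E\big[e^{-\lambda S_n}\big] = \big(1 - q + q e^{-\lambda}\big)^n = \big(1 - q(1-e^{-\lambda})\big)^n \leq \exp\!\big(-nq(1-e^{-\lambda})\big).
\]
Combining these gives $\P(S_n \leq (1-t)nq) \leq \exp\big(nq[\lambda(1-t) - (1-e^{-\lambda})]\big)$ for every $\lambda > 0$.

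Next I would optimize over $\lambda$. The bracketed exponent $\lambda(1-t) - (1-e^{-\lambda})$ is minimized at $\lambda^\star \coloneqq \log\tfrac{1}{1-t} > 0$ (well-defined for $t \in (0,1)$), where $e^{-\lambda^\star} = 1-t$, yielding
\[
\P\big(S_n \leq (1-t)nq\big) \leq \exp\!\big(-nq\,[\,(1-t)\log(1-t) + t\,]\big).
\]
It then remains to verify the elementary scalar inequality $(1-t)\log(1-t) + t \geq t^2/2$ for all $t \in (0,1)$. Setting $g(t) \coloneqq (1-t)\log(1-t) + t - t^2/2$, one computes $g(0) = 0$, $g'(t) = -\log(1-t) - t$ with $g'(0) = 0$, and $g''(t) = t/(1-t) \geq 0$ on $[0,1)$. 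Hence $g'$ is nondecreasing, so $g' \geq 0$, so $g \geq 0$ on $[0,1)$, which gives the claimed bound $e^{-nqt^2/2}$ for $t \in (0,1)$.

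Finally, the endpoint $t = 1$ must be treated separately, since there $\lambda^\star = \infty$; but in that case $\P(S_n \leq 0) = \P(S_n = 0) = (1-q)^n \leq e^{-nq} \leq e^{-nq/2}$, so the bound holds there too. I do not anticipate any genuine obstacle: the only mildly nontrivial ingredient is the scalar inequality $(1-t)\log(1-t) + t \geq t^2/2$, handled by the short convexity argument above, and this computation is essentially Chernoff's original one, so it suffices to present it cleanly.
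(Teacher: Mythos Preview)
Your proof is correct. The paper does not actually prove this lemma; it simply cites it as a classical result from \citet{chernoff_1952} and uses it as a black box, so there is no ``paper's proof'' to compare against. Your standard Chernoff argument (exponential Markov, MGF bound via $1+u\le e^u$, optimization in $\lambda$, and the scalar inequality $(1-t)\log(1-t)+t\ge t^2/2$ via convexity, with the $t=1$ endpoint handled directly) is exactly the intended derivation and is complete as written.
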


\begin{proof}[Proof of Lemma~\ref{lemma:lb_sum_w1}]

First, we will show that with high probability,
 \begin{align}\label{eq:lb_sum_wi}
 \sum_{i = 1}^{n-k} \xi_{(i)}  =  \sum_{i = 1}^{n} \xi_{i} \ind(\xi_i \leq \xi_{(n-k)})  \leq  \sum_{i=1}^n \xi_i \ind(\xi_i \leq q_{1-\alpha+t}) . \end{align}
 We then bound the right-hand side using Lemma \ref{lem:bern},
as it is a sum of i.i.d.\ random variables.

 Since the event $ \big\{\xi_{(n-k+1)} \in [0, q_{1-\alpha+t}] \big\}$ implies (\ref{eq:lb_sum_wi}),
\begin{align}
    \P \bigg( \sum_{i = 1}^{n-k} \xi_{(i)}  \leq  \sum_{i=1}^n \xi_i \ind(\xi_i \leq q_{1-\alpha+t}) \bigg) \geq 1 - \P\big(\xi_{(n-k+1)}< 0\big)-\P\big(\xi_{(n-k+1)}> q_{1-\alpha+t}\big) .  \label{eq:lb_Pa123asd}
\end{align}
Let $F(x) \coloneqq \P(\xi \leq x)$ denote the distribution function of $\xi$.
Using the identity $\ind(\xi> q) = 1 - \ind(\xi \leq  q) $,  the definition $\alpha \coloneqq k/n$, and the fact that $F(q_{1-\alpha+t}) \geq 1 -\alpha +t$, we obtain
\begin{align} 
      \P \big (\xi_{(n-k+1)}> q_{1-\alpha+t} \big)  & =   
  \P\left( 
   \frac{1}{n}\sum_{i=1}^n\ind(\xi_i\leq q_{1-\alpha+t}) \leq 1 - \alpha
   \right)   \nonumber \\
   & \leq  \P\left( 
   \frac{1}{n}\sum_{i=1}^n\ind(\xi_i\leq q_{1-\alpha+t}) \leq F(q_{1-\alpha+t}) - t   \right)   \nonumber .
\end{align}
By the Dvoretzky--Kiefer--Wolfowitz inequality \citep{massart1990tight},
\begin{align*}
\P\!\left(
\sup_{x\in\mathbb R}
\left|
\frac{1}{n}\sum_{i=1}^n \ind(\xi_i\le x)-F(x)
\right|
\ge t
\right)
\le
2e^{-2nt^2}.
\end{align*}
Together with~\eqref{kqjxh72}, this yields
\begin{align}
\P\big(\xi_{(n-k+1)}>q_{1-\alpha+t}\big)
\le
2e^{-2nt^2}.
 \label{kqjxh72}
\end{align}

To bound $\P(\xi_{(n-k+1)}<0)$, we use the symmetry of 
$\xi$, which implies $\P(\xi \geq 0) \geq 1/2$, and apply a Chernoff bound
(Lemma~\ref{lemma:chernoff}): 
\begin{align}
 \P\big(\xi_{(n-k+1)}<0\big)   &= \P\Bigg(\frac{1}{n}\sum_{i=1}^n \ind(\xi_i \geq 0) < \alpha \Bigg)
    \leq e^{-n(1/2-\alpha)^2}. \label{qwerty1asdf2}
\end{align}
Combining \eqref{eq:lb_Pa123asd}, \eqref{kqjxh72}, and  \eqref{qwerty1asdf2} yields
 \begin{align} \label{g7w6tefwgef78}
  \P \bigg( \sum_{i = 1}^{n-k} \xi_{(i)}  \geq  \sum_{i=1}^n \xi_i \ind(\xi_i \leq q_{1-\alpha+t}) \bigg) \leq   2e^{-2nt^2}  + e^{-n(1/2-\alpha)^2} ,
 \end{align}
establishing that (\ref{eq:lb_sum_wi}) holds with high probability.

Next, we bound $\sum_{i=1}^n \xi_i \ind(\xi_i \leq q_{1-\alpha+t})
$. Note that the random variable
$\xi \ind(\xi \le q)-\E[\xi \ind(\xi \le q)]$ is centered and sub-exponential.
By the monotonicity of the Orlicz norm and the symmetry of $\xi$,
\begin{align*}
\big\|\xi \ind(\xi \le q)-\E[\xi \ind(\xi \le q)]\big\|_{\psi_1}
&\le
\|\xi \ind(\xi \le q)\|_{\psi_1}
+ \big|\E[\xi \ind(\xi \le q)]\big| \le
\|\xi\|_{\psi_1}-\E[\xi \ind(\xi \le q)] . 
\end{align*}
Thus, by Lemma \ref{lem:bern}, 
     \begin{align}
        \P \left(\frac{1}{n} \sum_{i=1}^n \xi_i \ind(\xi_i \leq q_{1-\alpha+t})
        \geq 
        (1-t)\E\big[ \xi \ind(\xi\leq q_{1-\alpha+t})\big]+ t \|\xi\|_{\psi_1} \right) \leq e^{-c n t^2}. \nonumber
    \end{align}
Equivalently, 
        \begin{align}
        \label{eq:lb_sub_exp_sum}
        \P \left(\frac{1}{n} \sum_{i=1}^n \xi_i \ind(\xi_i \leq q_{1-\alpha+t})
        \geq 
        -(1-t)\E\big[ \xi \ind(\xi\geq q_{1-\alpha+t})\big]+ t \|\xi\|_{\psi_1} \right) \leq e^{-c n t^2}. 
    \end{align}
The result follows (\ref{g7w6tefwgef78}) and (\ref{eq:lb_sub_exp_sum}). 
\end{proof}


\begin{lemma} \label{lemma:ub_Ez|t}
Let $z \sim \mathcal{N}(0,1)$ and let $\eps$ be an independent random variable. For any $t \geq 0$,
\[
\E\big[z^2 \ind(\eps z \leq t)\big] \le \P(\eps z \le t).
\]
\end{lemma}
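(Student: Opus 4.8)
The plan is to reduce the claim to a one-dimensional Gaussian identity by conditioning on $\eps$. The inequality $\E[z^2 \ind(\eps z \le t)] \le \P(\eps z \le t)$ is equivalent to $\E[(z^2-1)\ind(\eps z \le t)] \le 0$, and since $z$ is independent of $\eps$, by Fubini it suffices to prove that for (almost) every value $s$ of $\eps$,
\[
h(s) \coloneqq \E\big[(z^2-1)\,\ind(sz \le t)\big] \le 0 .
\]
I would first dispose of $s=0$: because $t \ge 0$ we have $\ind(0 \le t)=1$, so $h(0) = \E[z^2-1] = 0$.

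Next I would rewrite the indicator as a one-sided event in $z$. For $s>0$, $\ind(sz \le t) = \ind(z \le t/s)$ with $t/s \ge 0$. For $s<0$, $\ind(sz \le t) = \ind(z \ge t/s)$ with $t/s \le 0$, and by the symmetry $z \stackrel{d}{=} -z$ of the standard normal this expectation equals the one with $\ind(z \le -t/s)$, where $-t/s \ge 0$. Thus in all cases it is enough to show
\[
g(a) \coloneqq \E\big[(z^2-1)\,\ind(z \le a)\big] \le 0 \qquad \text{for every } a \ge 0 .
\]

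The main computation is the antiderivative identity $\tfrac{d}{dx}\big(-x\phi(x)\big) = (x^2-1)\phi(x)$, where $\phi$ is the standard normal density; it follows from $\phi'(x) = -x\phi(x)$. Integrating over $(-\infty, a]$ and using $x\phi(x) \to 0$ as $x \to -\infty$ gives $g(a) = -a\phi(a)$, which is $\le 0$ exactly when $a \ge 0$. Combining the three cases shows $h(s) \le 0$ for all $s$, and integrating against the law of $\eps$ yields the claim.

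I do not expect a genuine obstacle here: the only mild care is in (i) justifying the conditioning/Fubini step (routine, since $(z^2-1)\ind(sz\le t)$ is dominated by the integrable $z^2+1$), and (ii) correctly tracking the sign of $s$ when turning $\ind(sz \le t)$ into a one-sided event and invoking Gaussian symmetry when $s<0$. Everything else reduces to the elementary formula $g(a) = -a\phi(a)$.
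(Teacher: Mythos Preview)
Your proof is correct and follows essentially the same approach as the paper's: both condition on $\eps$, reduce to the one-sided Gaussian inequality $\int_{A} z^2\phi(z)\,dz \le \int_{A}\phi(z)\,dz$ for a half-line $A$ determined by the sign of $\eps$ and containing $(-\infty,0]$ (or its reflection), and then integrate against the law of $\eps$. The only cosmetic differences are that you use symmetry to fold the $s<0$ case into $s>0$ and make the underlying identity $g(a)=-a\phi(a)$ explicit, whereas the paper treats the two half-line cases separately and simply invokes the inequality.
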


\begin{proof}[Proof of Lemma \ref{lem:ub_z2}] Let $q_{1-\alpha+t}$ denote the $(1-\alpha+t)$-quantile of $\eps z$, where $\eps$ and $z \sim \mathcal{N}(0,1)$ are independent. 
Applying ~\eqref{kqjxh72} to $\{\eps_i z_i \}_{i \in [n]}$, we have that for any $t \in (0,\alpha)$,
\[
\P\bigg(\sum_{i=1}^n \ind(\eps_i z_i> q_{1-\alpha + t}) \geq k     \bigg) \leq 2 e^{-2nt^2}. 
\]
Similarly to (\ref{eq:lb_sum_wi})--(\ref{eq:lb_Pa123asd}), this bound implies 
\begin{align} \label{123cjsdfvs2gteg}
   \P \bigg( \sum_{i \in \cS} z_i^2 \leq \sum_{i=1}^n z_i^2  \ind(\eps_i z_i \leq q_{1-\alpha+t}) \bigg) \geq 1 - 2 e^{-2nt^2} . 
\end{align}
By Lemma~\ref{lem:bern},
\begin{align}
    \label{eq:ub_sum_zi^2}
        \P\left(
        \frac{1}{n} \sum_{i=1}^n z^2_i \ind(\eps_i z_i \leq q_{1-\alpha+t})
        -
        \E\big[z^2 \ind (\eps z \leq q_{1-\alpha+t})\big]
        \geq
         2 t \right) \leq e^{-cnt^2},
    \end{align}
where we used the bound
\begin{align} \label{123g7wf}
\big\|z^2 \ind(\eps z \leq q_{1-\alpha+t})
        -
        \E\big[z^2 \ind (\eps z \leq q_{1-\alpha+t})\big]\big\|_{\psi_1} \leq \|z\|_{\psi_2}^2 + \E| z|^2 \leq 2 . 
\end{align}
    %
    The result follows from (\ref{123cjsdfvs2gteg}), (\ref{eq:ub_sum_zi^2}), and Lemma \ref{lemma:ub_Ez|t}, which implies
\[ \E\big[z^2 \ind(\eps z \leq q_{1-\alpha+t})\big] \leq \P(\eps z \leq q_{1-\alpha+t}) = 1 - \alpha + t . \]
\end{proof}


\begin{proof}[Proof of Lemma \ref{lem:lb_z2}] 

As the distribution of $\varepsilon z$ is symmetric, applying  (\ref{qwerty1asdf2}) to $\{\eps_i z_i\}_{i \in [n]}$ yields
\[ \P\left( 
   \sum_{i=1}^n\ind(\eps_i z_i \geq 0) <  k
   \right)\leq 2e^{-n(1/2-\alpha)^2} . 
\]
Similarly to (\ref{eq:lb_sum_wi})--(\ref{eq:lb_Pa123asd}), this bound implies 
\begin{align} \label{e21asde5hu5}
    \P \bigg( \sum_{i \in \cS} z_i^2 \geq \sum_{i=1}^n z^2_i \ind(\eps_i z_i \leq 0 ) \bigg) \geq 1 - 2e^{-n(1/2-\alpha)^2} .
\end{align}

\comment{
\vspace{1cm}
{\color{blue}OLD PROOF}
{\color{red} Whats the point of doing this for arbitrary $q$? Just take $q = 0$ from the start it would be much cleaner.}
    Let $\xi_i := \eps_i z_i$.
    %
    For $q\in \mathbb R $ define the following quantities
    \[
    S_k = \sum_{i \in \cS} z_i^2, \quad T_q = \sum_{i=1}^n z^2_i \ind(\xi_i \leq q),
    \]
    and the events
     \[
 \mathcal A = \Big\{S_k  \geq T_q\Big\}, \quad
 \mathcal B = 
 \Big\{\xi_{(n-k+1)}>q\Big\}.
 \]
Again, since $\mathcal A $ implies $\mathcal B $,  $\P(\mathcal A ) \geq \P(\mathcal B)$. 
{\color{red} Do you mean $\mathcal B$ implies $\mathcal A$? I'm not sure  $\mathcal B$ implies $\mathcal A$ as in the previous lemmas since we are doing a lower bound here not an upper bound.} %
By the equivalence
 \begin{align}
 \P(\mathcal B) = \P\bigg( 
   \frac{1}{n}\sum_{i=1}^n\ind(\xi_i>q) \geq \alpha 
   \bigg), \nonumber
 \end{align}
and since $\ind(\xi>q) = 1 - \ind(\xi \leq q) $,  it follows that
\[
\P(\mathcal B) = \P\left( 
   \frac{1}{n}\sum_{i=1}^n\ind(\xi_i\leq q) \leq 1-\alpha 
   \right) =
   \P\left( 
   \frac{1}{n}\sum_{i=1}^n\ind(\xi_i\leq q) \leq 1/2+1/2-\alpha 
   \right) .
\]
Choosing $q = q_{1/2}$, and since $\xi_i = z_i \eps_i$ is symmetric,  $q_{1/2} = 0$. {\color{red} take $q = 0$ from start and delete this.}
By the Dvoretzky--Kiefer--Wolfowitz inequality {\color{red} Chernoff not DKW?}
\[
\P(\mathcal B) = 
\P\left( 
   \frac{1}{n}\sum_{i=1}^n\ind(\xi_i\leq 0) \leq 1/2+1/2-\alpha 
   \right) \geq 1-2e^{-2(1/2-\alpha)^2} {\color{red} 2e^{-n(1/2-\alpha)^2}}
\]
Hence,  with $q=0$,
\begin{align}
\label{eq:lb_sum_z^2_2}
    \P\left( \mathcal A\right) \geq 1-2e^{-2(1/2-\alpha)^2}
\end{align}
}

\noindent By Lemma~\ref{lem:bern} and (\ref{123g7wf}), for any $t \in (0, \alpha)$, 
\begin{align*}
    \P \bigg( \frac{1}{n} \sum_{i=1}^n z^2_i \ind(\eps_i z_i\leq 0) - \E\big[z^2 \ind (\eps z \leq 0)\big] \leq - 2 t \bigg) \leq e^{- c n t^2} . 
\end{align*}
Since $\eps$ and $z$ are independent, 
\[\E\big[z^2 \ind (\eps z  \leq 0)\big] \geq \frac{1}{2}\E\big[z^2 | \eps z  \leq 0\big] 
= \frac{1}{2} . \] 
Thus,
\begin{align} \label{123dfbhuf4dt24thgwrn}
    \P \bigg( \frac{1}{n-k} \sum_{i=1}^n z^2_i \ind(\eps_i z_i\leq 0) \leq \frac{1}{2} - 2 t \bigg) \leq e^{- c n t^2} .
\end{align}
The result follows from (\ref{e21asde5hu5}) and (\ref{123dfbhuf4dt24thgwrn}).
\comment{
To complete the proof, we now lower bound the sum $T_0 = \sum_{i=1}^n z^2_i \ind(\xi_i\leq 0)$.
By Lemma~\ref{lem:bern} there exists an absolute constant $c_1>0$ such that for any $t\in(0,1)$
    \begin{align}
        \P\left(
        T_0/n - \E[z^2 \ind (\eps z \leq 0) ] \leq -Kt  \right) \leq e^{-c_1nt^2},\nonumber
    \end{align}
where     $K = \|z^2 \ind (\eps z \leq 0) - \E[z^2 \ind (\eps z \leq0)]\|_{\psi_1}\leq 1$.
Since $\eps z $ has the same distribution as $|\eps|\,z$, 
\[
\E[z^2 \ind (\eps z <0)] = \frac{1}{2} \E[z^2 | |\eps| z <0] = \frac{1}{2}\E[z^2|z<0] = \frac{1}{2}.
\]
Hence,
\begin{align}
        \P\left(
        T_0/n\leq 1/2-3t/2  \right) \leq e^{-c_1nt^2}. \nonumber
\end{align}
    Multiplying both sides by $\frac{1}{1-\alpha}$ gives
    \begin{align}
         \P\left(
        T_0/(n-k) \leq 
         (1-\alpha)^{-1} (1/2-3t/2)\right) \leq e^{-c_1nt^2}. \nonumber
    \end{align}
    Since $(1-\alpha)^{-1} > 1$, 
    \[
    \P\left(
        T_0/(n-k) \leq 
        (1/2-3t/2)\right) \leq e^{-c_1 nt^2}.
    \]
    Choosing $t = 1/6$ and setting $c = c_1 / 36$ gives
    \begin{align}
         \P\left(
        T_0/(n-k) \leq 
        1/4 \right) \leq e^{-cn}.
    \end{align}
    The result follows by combining this with \eqref{eq:lb_sum_z^2_2}, and using a union bound.}
\end{proof}

\comment{\begin{lemma} 
\label{lemma:ub_Ez|t}
    Let $z\sim \mathcal{N}(0,1)$. Then, for any $t\geq0$, $ \E[z^2| z\leq t] \leq 1 $.
\end{lemma}
\begin{proof}
    Let $\phi(s)$ be the probability density function of the standard normal distribution.
    By definition,
    \[
    \E[z^2 \,|\,z<t] = \frac{\int_{-\infty}^t s^2 \phi(s) ds}{\int_{-\infty}^t \phi(s) ds}  = \frac{\int_{-\infty}^t s^2e^{-s^2/2}ds}{\int_{-\infty}^t e^{-s^2/2}ds}.
    \]
    Integrating by parts the numerator gives
    \[
    {\int_{-\infty}^ts^2 e^{-s^2/2} ds} = -te^{-t^2/2} + {\int_{-\infty}^t e^{-s^2/2} ds} .
    \]
    Thus,
    \[
    \E[z^2 \,|\,z\leq t] = 1- \frac{te^{-t^2/2}}{{\int_{\infty}^t e^{-s^2/2} ds}} \leq 1.
    \]
\end{proof}}

\begin{proof}[Proof of Lemma \ref{lemma:ub_Ez|t}]
By the independence of $\eps$ and $z$,  
\[
\E\big[z^2 \ind(\eps z \leq t)\big] = \int_{\mathbb{R}^2} z^2 \ind(\eps z \le t) \, \phi(z) \,  dF_\eps(x) \, dz,
\]
where $F_\eps$ is the distribution function of $\eps$ and $\phi(z)$ denotes the standard normal density. For each $x \in \mathbb{R}$, define  
\[
A_x := \{ z \in \mathbb{R} :  x z \le t\} =
\begin{cases}
(-\infty, t/x] & x > 0 \\
[t/x, \infty) & x < 0 \\
\mathbb{R} & x = 0 
\end{cases} .
\]  Then, 
\begin{align} \nonumber
    \E\big[z^2 \ind(\eps z \leq t)\big] = \int_{\mathbb{R}} \int_{A_x} z^2 \, \phi(z) \, dz \,  dF_\eps(x) . 
\end{align}
Recall that for the standard normal density,
\[
\int_{-\infty}^x z^2 \phi(z) \, dz \leq \int_{-\infty}^x \phi(z) \, dz \quad \text{for } x > 0, \quad
\int_x^{\infty} z^2 \phi(z) \, dz \leq \int_x^{\infty} \phi(z) \, dz \quad \text{for } x \leq 0.
\]
Thus,
\[
\int_{A_x} z^2 \phi(z) dz \le \int_{A_x} \phi(z) dz.
\]
It follows that 
\[
\E\big[z^2 \ind(\eps z \leq t)\big] \le \int_\mathbb{R} \int_{A_x} \phi(z) \, dz \, dF_\eps(x) = \P(\eps z \le t),
\]
completing the proof.
\end{proof}


\end{document}